\documentclass[a4paper,11pt,leqno]{article}
\usepackage{amsmath,amsfonts,amsthm,amssymb}
\usepackage[left=3cm, right=3cm, top=3cm, bottom=3.5cm]{geometry}
\usepackage{graphicx}
\usepackage{mathrsfs}
\usepackage{tikz}
\usepackage{marginnote}
\numberwithin{equation}{section}
\usepackage[final]{pdfpages}

\theoremstyle{plain}
\newtheorem{theorem}{Theorem}[section]

\newtheorem{lemma}[theorem]{Lemma}
\newtheorem{proposition}[theorem]{Proposition}
\newtheorem{corollary}[theorem]{Corollary}

\theoremstyle{remark}

\theoremstyle{definition}

\newcommand{\N}{\mathbb{N}}
\newcommand{\R}{\mathbb{R}}

\newcommand{\I}{\mathbb{I}}
\newcommand{\1}{\mathbb{1}}

\newcommand{\SF}{\mathbb{S}}

\usepackage{color}

\begin{document}

\title{Connectivity of the diffuse interface and fine structure of minimizers in the Allen-Cahn theory of phase transitions. }
\author{{ Giorgio Fusco\footnote{Dipartimento di Matematica Pura ed Applicata, Universit\`a degli Studi dell'Aquila, Via Vetoio, 67010 Coppito, L'Aquila, Italy; e-mail:{\texttt{fusco@univaq.it}}}}
}
\date{}
\maketitle
\begin{abstract}
In the Allen-Cahn theory of phase transitions, minimizers partition the domain in subregions, the sets where a minimizer is near to one or to another of the zeros of the potential. These subregions that model the phases  are separated by a tiny \emph{Diffuse Interface}. Understanding the shape of this diffuse interface is an important step toward the description of the structure of minimizers.

We assume Dirichlet data and present general conditions on the domain and on the boundary datum ensuring the connectivity of the diffuse interface.

Then we restrict to the case of two dimensions and show that the phases can be separated, in a certain optimal way, by a connected \emph{network} with a well defined structure. This network is contained in the diffuse interface and is a priori unknown.

Under general assumption on the potential and on the Dirichlet datum, we show that, if we assume that the phase are connected, then we can obtain precise information on the shape of the network and in turn a detailed description of the fine structure of minimizers. In particular we can characterize the shape and the size of the various phases and also how they depend on the surface tensions.
\end{abstract}

Keywords: Allen-Cahn energy, diffuse interface, phase separation, network.

\section{Introduction}
We study minimizers $u^\epsilon$ of the Allen-Cahn energy subjected to Dirichlet conditions:
\begin{equation}
\begin{split}
&J_\Omega^\epsilon(v)=\min_{v\in\mathscr{A}} J_\Omega^\epsilon(v)=\int_\Omega\Big(\epsilon\frac{\vert\nabla v\vert^2}{2}+\frac{1}{\epsilon}W(v)\Big)dx,\;\;0<\epsilon<<1,\\
&\mathscr{A}=\{v\in H^1(\Omega;\R^m):v\vert_{\partial\Omega}=v_0^\epsilon\},
\end{split}
\label{min}
\end{equation}
where $\Omega\subset\R^n$, $n\geq 2$ is a bounded smooth domain, $0<\epsilon<<1$ a parameter and $v_0^\epsilon:\partial\Omega\rightarrow\R^m$ is a smooth boundary datum that may also depend on $\epsilon$.

We assume
\begin{description}
\item[$h_1$] $W:\R^m\rightarrow\R$ is a smooth nonnegative potential that vanishes on a finite set
\[A=\{W=0\}=\{a_1,\ldots,a_N\},\;\;N\geq 2.\]
The hessian matrix $W_{zz}(a)$, $a\in A$ is positive definite and there is $K>0$ such that
\[W_z(z)\cdot z>0,\;\;\vert z\vert\geq K.\]
\item[$h_2$] There is a constant $M$ independent of $\epsilon>0$ such that
\[\vert v_0^\epsilon\vert+\epsilon\vert\nabla v_0^\epsilon\vert\leq M.\]
\end{description}

The study of minimizers $u^\epsilon$ of problem \eqref{min} is an interesting and challenging mathematical problem
 which is also relevant from a physical point of view. In the theory of phase transitions the functional
 $J_\Omega^\epsilon$ may be regarded as a model for the free energy of a substance which can exists in $N$
 equally preferred phases corresponding to the zeros $a_1,\ldots,a_N$ of $W$.

Assumption $h_2$ implies the existence of $v_{\mathrm{test}}^\epsilon\in\mathscr{A}$ which satisfies
\begin{equation}
J_\Omega^\epsilon(v_{\mathrm{test}}^\epsilon)\leq C,
\label{UB0}
\end{equation}
where here and in the following $C$ denotes a positive constant independent of $\epsilon$. For example we
can fix an $a\in A$ and take
 \[v_{\mathrm{test}}^\epsilon(x)=\left\{\begin{array}{l}
 \frac{1}{\epsilon}(v_0^\epsilon(y)(\epsilon-\vert x-y\vert)+\vert x-y\vert a),\;\text{if}\;\vert x-y\vert\leq\epsilon,\\
a,\;\text{if}\;\vert x-y\vert>\epsilon,
\end{array}\right.
\]
where $y$ is the orthogonal projection of $x$ on $\partial\Omega$.

 From the upper bound \eqref{UB0} and standard methods of variational calculus we have the existence of a minimizer $u^\epsilon$ of problem \eqref{min}. The minimizer is actually a classical solution of the Allen-Cahn equation
\begin{equation}
\left\{\begin{array}{l}\Delta u=W_z(u),\;x\in\Omega,\\
u=v_0^\epsilon,\;\;x\in\partial\Omega
\end{array}\right.
\label{AC}
\end{equation}
which is the Euler-Lagrange equation associated to $J_\Omega^\epsilon$. From regularity theory we also have the bound
\begin{equation}
\vert u^\epsilon\vert+\epsilon\vert\nabla u^\epsilon\vert\leq M^\prime,
\label{H1bound}
\end{equation}
for some constant $M^\prime>0$.

Once the existence of a minimizer $u^\epsilon$ is known the problem becomes to understand its fine structure. In particular to determine how $u^\epsilon$ depends on the shape of $\Omega$, on the boundary datum $v_0^\epsilon$ and on the connections among the zeros of $W$ together with the surface tensions $\sigma_{aa^\prime}$ of these connections ($\sigma_{aa^\prime}$ is the density of energy of a planar interface that separate phase $a$ from phase $a^\prime$). An object of special interest in this direction is the diffuse interface
\begin{equation}
\mathscr{I}^{\epsilon,\delta}=\{x\in\bar{\Omega}:\min_{a\in A}\vert u^\epsilon(x)-a\vert>\delta\},
\label{diffInt}
\end{equation}
where $\delta>0$ is a number that satisfies
\begin{equation}
\delta<d_0=\frac{1}{2}\min_{a\neq a^\prime\in A}\vert a-a^\prime\vert.
\label{delta<}
\end{equation}
$\mathscr{I}^{\epsilon,\delta}$ is intimately related to the fine structure of $u^\epsilon$. Indeed \eqref{delta<} implies that $\mathscr{I}^{\epsilon,\delta}$ separates the phases in the sense that  (see Figure \ref{Phases})
 \begin{equation}
 \begin{split}
 &\bar{\Omega}\setminus\mathscr{I}^{\epsilon,\delta}=\cup_{a\in A}\Omega_a^{\epsilon,\delta},\\
 &\Omega_a^{\epsilon,\delta}=\{x\in\Omega:\vert u^\epsilon(x)-a\vert\leq\delta\}\;\;a\in A,\\
 &\Omega_a^{\epsilon,\delta}\cap\Omega_{a^\prime}^{\epsilon,\delta}=\emptyset,\;\;a\neq a^\prime.
 \label{phases}
 \end{split}
 \end{equation}
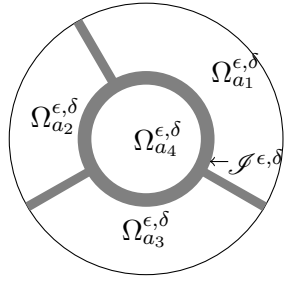
\begin{figure}
  \begin{center}
\begin{tikzpicture}[scale=.6]

\draw[] (0,0) circle [radius=3];;

\path[fill=gray] (-1.408,2.649) arc [radius=3, start angle=118, end angle= 122]
to (-.12,0)--(.12,0)--(-1.408,2.649);
\path[fill=gray] (-2.649,-1.408) arc [radius=3, start angle=208, end angle= 212]
to (.176,0)--(-.176,0)--(-2.649,-1.408);
\path[fill=gray] (2.649,-1.408) arc [radius=3, start angle=332, end angle= 328]
to (-.176,0)--(.176,0)--(2.649,-1.408);
\path[fill=gray] (0,0) circle [radius=1.5];;
\path[fill=white] (0,0) circle [radius=1.2];;
\node[right] at (1.2,1.5) {$\Omega_{a_1}^{\epsilon,\delta}$};
\node[right] at (-.5,0) {$\Omega_{a_4}^{\epsilon,\delta}$};
\node[] at (-2.,.5) {$\Omega_{a_2}^{\epsilon,\delta}$};
\node[] at (0,-2) {$\Omega_{a_3}^{\epsilon,\delta}$};
\draw[->] (1.8,-.5)--(1.4,-.5);
\node[right] at (1.55,-.5) {$\mathscr{I}^{\epsilon,\delta}$};
 \end{tikzpicture}
\end{center}
\caption{The diffuse interface $\mathscr{I}^{\epsilon,\delta}$ and the phases $\Omega_a$, $a\in A$.}
\label{Phases}
\end{figure}

 Assumption $h_1$ implies the existence of a constant $c_W>0$ such that, for $\delta\in(0,d_0]$ ,
 \begin{equation}
 \vert z\vert\leq M^\prime,\;\min_{a\in A}\vert z-a\vert\geq\delta\;\Rightarrow\;W(z)\geq\frac{1}{2}c_W^2\delta^2.
 \label{cW}
 \end{equation}
  This and the bound \eqref{UB0} yield
 \[\frac{1}{2\epsilon}c_W^2\delta^2\vert\mathscr{I}^{\epsilon,\delta}\vert\leq C\;\Rightarrow\;
 \vert\mathscr{I}^{\epsilon,\delta}\vert\leq C\frac{\epsilon}{\delta^2}.\]
 Hence, for the measure of $\mathscr{I}^{\epsilon,\delta}$, we have the estimate
 \begin{equation}
 \vert\mathscr{I}^{\epsilon,\delta}\vert\leq C\epsilon^{1-2\alpha},\;\;\text{for}\;\delta\geq\epsilon^\alpha,\;\alpha\in(0,\frac{1}{2}).
 \label{|I|}
 \end{equation}

 From \eqref{|I|} we have that, for $0<\epsilon<<1$, most of $\bar{\Omega}$ is occupied by the phases $\Omega_a^{\epsilon,\delta}$ separated by a tiny diffuse interface.

 In this paper we focus on the diffuse interface and present sufficient conditions on $\Omega$ and on the Dirichlet datum $v_0^\epsilon$ ensuring the connectivity of $\mathscr{I}^{\epsilon,\delta}$. Once we know that  $\mathscr{I}^{\epsilon,\delta}$ is connected we can ask if there exists an $n-1$ dimensional rectifiable current $\mathscr{G}^{\epsilon,\delta}\subset\mathscr{I}^{\epsilon,\delta}$ which partitions $\Omega$ in open sets corresponding to the phases. We study this question for $n=2$ and show that there exists a connected $C^1$-network $\mathscr{G}^{\epsilon,\delta}\subset\mathscr{I}^{\epsilon,\delta}$, actually infinitely many, with a precise structure which realizes the said partition.

 We introduce some notation and  give precise statements of our results.

 Consider the maps
 \begin{equation}
 \begin{split}
 &f^\epsilon:U^\epsilon\rightarrow(0,d_0),\;\;U^\epsilon=\{x\in\Omega:u^\epsilon(x)\in\cup_{a\in A}B_{d_0}(a)\},\\
 &f^\epsilon(x)=\min_{a\in A}\vert u^\epsilon-a\vert.\\
 &\text{and}\\
  &g^\epsilon:V^\epsilon\rightarrow(0,d_0),\;\;V^\epsilon=\{x\in\partial\Omega:v_0^\epsilon(x)\in\cup_{a\in A}B_{d_0}(a)\},\\
 &g^\epsilon(x)=\min_{a\in A}\vert u^\epsilon-a\vert.\\
 \end{split}
 \label{sard}
 \end{equation}
 From the definition \eqref{delta<} of $d_0$, for each $x\in U^\epsilon$ there is a unique $a\in A$ that
  realizes the minimum that defines $f^\epsilon$. It follows that $f^\epsilon$ has the same regularity
   of $u^\epsilon$. Analogous statements apply to $g^\epsilon$. We say that $\delta\in(0,d_0)$ is a regular
   value if $\delta$ is a regular value of both $f^\epsilon$ and $g^\epsilon$.

 Since $v_0^\epsilon$ is a given smooth map, $\delta$ being a regular value of $g^\epsilon$ should be
  regarded as an assumption on $v_0^\epsilon$. On the other hand $u^\epsilon\in C^k(\Omega;\R^m)$ implies
  $f^\epsilon\in C^k(U^\epsilon;\R)$. Hence, if $k\geq n$ and $\delta\in(0,d_0)$ is a regular value, by Sard lemma
  (see \cite{Dieu} pag.170, problem 2) and the implicit function theorem we can assume that
  $\partial\mathscr{I}^{\epsilon,\delta}\cap\Omega$ is a $C^2$ manifold and that
  $\bar{\Omega}\setminus\mathscr{I}^{\epsilon,\delta}$ has a finite number of connected components with mutual positive distance.
 Set
 \begin{equation}
 \Gamma_a^{\epsilon,\delta}=\{x\in\partial\Omega:\vert v_0^\epsilon(x)-a\vert\leq\delta\},\;\;a\in A.
 \label{Gammaa}
 \end{equation}
 Then we have
 \begin{theorem}
 \label{connect}
 There is $\delta_0>0$ such that if $h_1$ and $h_2$ hold and moreover
 \begin{enumerate}
 \item  $\partial\Omega$ is connected.
 \item There exists $\tilde{A}\subset A$ such that $\Gamma_a^{\epsilon,\delta}=\emptyset$ for $a\in A\setminus\tilde{A}$ and $a\in\tilde{A}$ implies
 \[\begin{split}
 &\;\;\,\Gamma_a^{\epsilon,\delta}\;\text{is an arc}\;\text{if}\;n=2,\\
 &
 \partial\Gamma_a^{\epsilon,\delta}\;\text{is connected},\;\text{if}\;n>2.
 \end{split}\]
 \item $u^\epsilon\in C^k(\Omega;\R^m)\cap C^1(\bar{\Omega};\R^M)$, $k\geq n$ and $\delta\in(0,\delta_0)$ is a regular value.
 \end{enumerate}
 Then, if $\{\omega_i^{\epsilon,\delta} \}_{i\in I}$ is a family of connected components of $\bar{\Omega}\setminus\mathscr{I}^{\epsilon,\delta}$,  $\bar{\Omega}\setminus\{\omega_i^{\epsilon,\delta} \}_{i\in I}$ is a connected set. In particular $\mathscr{I}^{\epsilon,\delta}$ and $\bar{\Omega}\setminus\Omega_a^{\epsilon,\delta}$, $a\in A$, are connected sets.
 \end{theorem}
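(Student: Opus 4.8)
The plan is to reduce the statement to a single topological claim about the components $\omega_i:=\omega_i^{\epsilon,\delta}$, and to prove that claim by combining the minimality of $u^\epsilon$ with a maximum–principle estimate valid once $\delta$ is small. First the set-up: each $\omega_i$ lies in a unique phase $\Omega_{a_i}:=\Omega_{a_i}^{\epsilon,\delta}$, is a compact $C^2$ manifold with boundary (hypothesis (3) and Sard), and on $\partial\omega_i\cap\Omega$ one has $f^\epsilon=\delta$ with nearest zero $a_i$, i.e. $|u^\epsilon-a_i|=\delta$ there; by the positive mutual distance of the components, $\partial\omega_i\cap\Omega$ is a union of components of the regular level hypersurface $\{f^\epsilon=\delta\}$. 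Since $\Gamma_{a}:=\Gamma_a^{\epsilon,\delta}$ is connected (hypothesis (2)) it meets exactly one component of $\Omega_a$; hence for each $a\in\tilde A$ there is a single $\omega_i$ touching $\partial\Omega$, and it does so precisely along the arc $\Gamma_a$, while every other $\omega_i$ satisfies $\omega_i\subset\Omega$.

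\textbf{Claim:} for every $i$ the set $\partial\omega_i\cap\Omega$ is connected. Granting the Claim, $\bar\Omega\setminus\bigcup_{i\in I}\omega_i$ is path–connected: given two of its points, join them by a path $\gamma$ in the connected, locally path–connected set $\bar\Omega$, and each time $\gamma$ meets an $\omega_i$ with $i\in I$ reroute it, between its entry and its exit, along the $\mathscr{I}^{\epsilon,\delta}$–side of a tubular neighbourhood of the connected hypersurface $\partial\omega_i\cap\Omega$ (completed, for the boundary component, by the arc $\Gamma_{a_i}$; here hypotheses (1)–(2) make this neighbourhood connected and contained in $\mathscr{I}^{\epsilon,\delta}$, hence disjoint from all $\omega_j$). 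Choosing $I$ to be the set of all components, resp. the set of components of one $\Omega_a$, yields the two displayed assertions.

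Proof of the Claim. Suppose $\partial\omega_i\cap\Omega$ is disconnected. Then $\R^n\setminus\omega_i$ has a bounded component $H$ with $\overline H\subset\Omega$ and $\partial H\subset\partial\omega_i\cap\Omega\subset\{x:|u^\epsilon(x)-a|=\delta\}$, where $a:=a_i$. Since $u^\epsilon$ minimizes $J_\Omega^\epsilon$ over $\mathscr{A}$ and $\overline H\subset\Omega$, its restriction $u^\epsilon|_H$ minimizes $J_H^\epsilon$ among all $w\in H^1(H;\R^m)$ with $w|_{\partial H}=u^\epsilon|_{\partial H}$ (extend any such $w$ by $u^\epsilon$ off $H$ to obtain an admissible function in \eqref{min}); its boundary trace takes values in $\overline{B_\delta(a)}$. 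One then invokes the following lemma, from which the constant $\delta_0$ of the Theorem comes: \emph{there is $\delta_0\in(0,d_0)$, depending only on $W$ (and we work with $0<\epsilon\ll1$), such that if $D\subset\R^n$ is open and bounded and $v$ minimizes $J_D^\epsilon$ under $v|_{\partial D}=g$ with $\|v\|_{L^\infty(D)}\le M^\prime$ and $\|g-a\|_{L^\infty(\partial D)}\le\delta$ for some $a\in A$ and $\delta\le\delta_0$, then $|v-a|\le\delta$ on $D$.} Applied with $D=H$ this gives $H\subset\Omega_a$; since $H$ is open, connected and disjoint from $\omega_i$, it lies in another component $\omega_{i'}$ of $\Omega_a$, so $\partial H\subset\omega_i\cap\omega_{i'}=\emptyset$ — impossible, $H$ being bounded and nonempty. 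Hence $\partial\omega_i\cap\Omega$ is connected.

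It remains to establish the lemma, which I expect to be the real obstacle. Choose $\delta_0<d_0$ so small that $W$ is strictly convex on $\overline{B_{2\delta_0}(a)}$ for every $a\in A$ (possible by $h_1$, since $W_{zz}(a)>0$), and in addition so that $\delta_0^2$ is small relative to $\min_{a\ne a^\prime}\sigma_{aa^\prime}$ and to $c_W^2$. \emph{Step 1:} $v(D)\subset\overline{B_{\delta_0}(a)}$. Otherwise $E:=\{|v-a|>\delta_0\}$ is a nonempty open set with $\overline E\subset D$ (its trace on $\partial D$ being $\delta$–close to $a$) on which $v$ minimizes $J_E^\epsilon$ with trace at distance $\delta_0$ from $a$; its energy is bounded below either by $\epsilon^{-1}$ times a fixed fraction of $|E|$, by \eqref{cW}, on the part where $v$ stays away from all wells, or — where $v$ transitions to another well $a^\prime$ — by $\sigma_{aa^\prime}$ times the $\mathcal{H}^{n-1}$–measure of a level hypersurface separating that transition from $\partial E$; comparing with the explicit competitor obtained by pushing $v|_{\partial E}$ radially to $a$ across a boundary layer of width $\sim\epsilon$ (energy $\lesssim\delta_0^2\,\mathcal{H}^{n-1}(\partial E)$) and using the smallness of $\delta_0$ forces $E=\emptyset$. \emph{Step 2:} on $\overline{B_{\delta_0}(a)}$ convexity of $W$ with minimum at $a$ gives $(z-a)\cdot W_z(z)\ge W(z)\ge0$; since $v$ solves the Euler–Lagrange equation $\epsilon\Delta v=\epsilon^{-1}W_z(v)$ in $D$, the function $\varphi:=|v-a|^2$ satisfies $\Delta\varphi=2|\nabla v|^2+2\epsilon^{-2}(v-a)\cdot W_z(v)\ge0$, so $\varphi$ is subharmonic and $\max_{\overline D}\varphi=\max_{\partial D}\varphi\le\delta^2$. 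The genuinely delicate point is Step 1 — the energetic confinement of a near–$a$ minimizer to the convexity ball of $W$, which pins down how small $\delta_0$ must be taken; Step 2 and the tubular–neighbourhood bookkeeping in the reduction are then routine.
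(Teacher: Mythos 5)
Your overall architecture is the same as the paper's: a maximum-principle argument showing that the boundary of each component $\omega_i$ cannot have a "closed" piece enclosing a bounded complementary region (the paper's Lemma \ref{do-conn}, proved via the Czarnecki criterion plus the cut-off/maximum principle of \cite{afs}), followed by a rerouting of paths along one-sided push-offs of these boundaries. The genuine gap is in Step 1 of your self-contained proof of the maximum-principle lemma. Your competitor has energy $\lesssim\delta_0^2\,\mathcal{H}^{n-1}(\partial E)$, but $\partial E$ is a level set of the unknown minimizer and $\mathcal{H}^{n-1}(\partial E)$ carries no a priori bound (it may blow up as $\epsilon\to0$); on the other side, neither of your proposed lower bounds ($c\epsilon^{-1}|E|$ from \eqref{cW}, or $\sigma_{aa'}$ times the measure of \emph{some other} level surface) is comparable to $\delta_0^2\mathcal{H}^{n-1}(\partial E)$ — e.g.\ $E$ could be a sheet of thickness $o(\epsilon)$, making $\epsilon^{-1}|E|\ll\mathcal{H}^{n-1}(\partial E)$. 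So "using the smallness of $\delta_0$" does not close the comparison. The paper avoids this entirely by quoting Theorem 4.1 of \cite{afs}, whose hypothesis is precisely the radial monotonicity \eqref{increas} (this is where $\delta_0$ comes from) and whose proof is a \emph{pointwise} replacement $v\mapsto a+\min(|v-a|,\delta)\frac{v-a}{|v-a|}$, which is $1$-Lipschitz on the gradient term and decreases $W$ by monotonicity — with the one delicate point (which your energy argument also does not address) being the region where $|v-a|$ is large and $v$ may sit near another well $a'$ with $W(v)=0$, so that truncation does not trivially decrease the potential term. If you want a self-contained proof you need that lemma's actual mechanism, not a perimeter-versus-volume estimate.

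Two smaller points. First, your inference "$\partial\omega_i\cap\Omega$ disconnected $\Rightarrow$ $\R^n\setminus\omega_i$ has a bounded component with closure in $\Omega$" is correct but silently uses hypothesis (ii) in its $n>2$ form: since $\partial\Gamma_a$ is \emph{connected}, at most one component of the regular level hypersurface $\partial\omega_i\cap\Omega$ can have nonempty boundary, so a second component must be closed and then Jordan--Brouwer plus the local sidedness of $\omega_i$ produces the bounded complementary region; this is exactly where the counterexample of \cite{AF+} (two boundary components) is excluded, and it deserves to be said. Second, your rerouting "along the $\mathscr{I}$-side of a tubular neighbourhood of $\partial\omega_i\cap\Omega$, completed by $\Gamma_{a_i}$" is too quick for boundary-touching components when $n>2$: near $\partial\Gamma_a$ the normal push-off $x\mapsto x+t\mu_x$ may leave $\bar\Omega$, and the part of the push-off staying at distance $\ge\theta$ from $\partial\Omega$ need not be connected even though $\partial\omega_i\cap\Omega$ is. This is precisely the case the paper spends most of its proof on, resolving it by routing around $\partial\Gamma_a$ inside the connected set $m_{1,t}\cap\partial\Omega_\theta$ (diffeomorphic to the connected $\partial\Gamma_a$). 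Your stronger Claim does simplify the interior case, but the boundary case still needs that extra device.
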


Assumption (ii) in Theorem \ref{connect} implies that $\Omega_a^{\epsilon,\delta}\cap\partial\Omega$ has a unique connected component. This fact is essential for the validity of Theorem \ref{connect}. A situation where  $\Omega_a^{\epsilon,\delta}\cap\partial\Omega$ has two components and the diffuse interface in not connected is considered in \cite{AF+}.

An important and deep result on the structure of $\mathscr{I}^{\epsilon,\delta}$ is due to  to Sternberg and Zumbrun \cite{SZu}. They studied stable critical points of $J_\Omega^\epsilon$ subjected to a mass constraint

\begin{equation}
\frac{1}{\vert\Omega\vert}\int_\Omega vdx=\sum_{a\in A}\alpha_aa,\quad \alpha_a\geq 0,\;\sum_{a\in A}\alpha_a=1,
\label{mass}
\end{equation}
which expresses the fact that the average compositions of the material is fixed. For the scalar case $m=1$ and for potentials with two zeros $A=\{a_-,a_+\}$ in \cite{SZu} it is shown that, if $\Omega$ is strictly convex, then, for $\epsilon>0$ small and $\delta_\epsilon=\epsilon^k$ with $k>0$ sufficiently large, the diffuse interface is connected and separates the sets $\Omega_\pm=\{x\in\Omega:\vert u^\epsilon(x)-a_\pm\vert\leq\delta_\epsilon\}$. Here by a stable critical point we mean a critical point $u^\epsilon$ such that the second variation of $J_\Omega^\epsilon$ at $u^\epsilon$ is nonnegative:
\[d^2J_\Omega^\epsilon[u^\epsilon](\phi,\phi)
=\int_\Omega(\epsilon\vert\nabla\phi\vert^2+\frac{1}{\epsilon}W_{zz}(u^\epsilon)\phi\cdot\phi)dx\geq 0,\]
for all $\phi\in H^1(\Omega;\R^m)$ that satisfy \eqref{mass}.

The constraint \eqref{mass} strongly restricts the class of allowed perturbations of $u^\epsilon$. This makes the proof of the connectivity in  \cite{SZu} quite elaborate  and difficult and requires the strict convexity of $\Omega$. On the contrary assumption (i) in Theorem \ref{connect} allows for complex geometries. For example, for $n=3$, $\Omega$ can be a torus.

The relationship between the fine structure of minimizers and the diffuse interface for the constrained problem has been studied
by Baldo \cite{B} (see also \cite{M}, \cite{S}), who, using the theory of $\Gamma$-convergence, proved that if $u^\epsilon$, $\epsilon>0$, is a sequence of minimizers that converges in $L^1(\Omega;\R^m)$ to some map $u^0$, then $u^0=\sum_{a\in A}a\I_{S_a}$ where $S_{a_1},\ldots,S_{a_N}$ is a partition of $\Omega$ that minimizes the quantity $\sum_{a,a^\prime\in A}\sigma_{a,a^\prime}\partial^*S_a\cap\partial^*S_{a^\prime}$ among all partitions of $\Omega$ that satisfy $\sum_{a\in A}a\vert S_a\vert=\sum_{a\in A}\alpha_aa\vert\Omega\vert$. On the basis of this result one expects that, for $0<\epsilon<<1$, also $\mathscr{I}^{\epsilon,\delta}$  as the jump set of the limit map $u^0$, should enjoy some minimality property. The topology of $\Gamma$-convergence is too weak for establishing a rigorous link between the limit result of Baldo and the structure of $u^\epsilon$ for small $\epsilon>0$.

For the unconstrained problem with natural boundary conditions a trivial observation is that minimizers are necessarily constant and coincide with one of the zeros of $W$:
\[u^\epsilon\equiv a,\;\;\text{for some}\;a\in A.\]
This implies that to see some interesting  structure of critical points of $J_\Omega^\epsilon$ we must relax the requirements on $u^\epsilon$ and consider local  minimizers or stable critical points. Casten and Holland \cite{CH} have shown that, if $\Omega$ is convex, stable critical points are constant. On the other hand Matano \cite{Ma} has proved that, for a dumbbell shaped $\Omega$, with a sufficiently thin neck, there exist nonconstant local minimizers.

The influence of the shape of $\Omega$ on the structure of minimizers has been studied also for the constrained problem beginning with the work of Carr, Gurtin and Slemrod \cite{CGS} who showed that for the scalar problem and $A=\{-1,1\}$, if $\Omega=(0,1)$, then a minimizer of $J_{(0,1)}^\epsilon$ constrained by $\int_{(0,1)}vdx=m\in(-1,1)$ is monotone. This results has been extended to the case where $\Omega$ is a cylinder by Gurtin and Matano \cite{GM} who have considered the effect of various geometries of $\Omega$.

Next we assume $n=2$ and state our result on the existence of a network that separates the phases. We let $\tilde{N}=\sharp\tilde{A}$ be the cardinality of $\tilde{A}$. Assumption (i) in Theorem \ref{connect} and $n=2$ imply that $\partial\Omega$ is a Jordan curve. It follows
\begin{equation}
\partial\Omega\setminus\cup_{a\in\tilde{A}}\Gamma_a^{\epsilon,\delta}=\cup_{a\in\tilde{A}} I_a^{\epsilon,\delta},
\label{Bdry-int}
\end{equation}
where $I_a^{\epsilon,\delta}\subset\partial\Omega$, $a\in\tilde{A}$ is an open arc. We assume that $a$ is chosen so that the first extreme of $I_a^{\epsilon,\delta}$ coincides with the second extreme of $\Gamma_a^{\epsilon,\delta}$ with respect to a fixed orientation of $\partial\Omega$.
\begin{theorem}
\label{network}
Assume $n=2$ and let $W$, $v_0^\epsilon$ and $\delta$ be as in Theorem \ref{connect}. Also assume that $\tilde{N}\geq 2$ and that $\Omega_a^{\epsilon,\delta}\neq\emptyset$, $a\in A$. Then
\begin{enumerate}
\item There exists a connected network $\mathscr{G}^{\epsilon,\delta}\subset\mathscr{I}^{\epsilon,\delta}$ that separates the phases in the sense that
\begin{equation}
 \begin{split}
&\bar{\Omega}\setminus\mathscr{G}^{\epsilon,\delta}=\cup_{a\in A}S_a^{\epsilon,\delta},\\
&\Omega_a^{\epsilon,\delta}\subset S_a^{\epsilon,\delta},\;\;a\in A,
  \end{split}
  \label{dec}
 \end{equation}
  where $S_a^{\epsilon,\delta}$, $a\in A$ are relatively open simply connected subsets of $\bar{\Omega}$.
\item $\mathscr{G}^{\epsilon,\delta}=\cup_{i=1}^{n_s}\gamma_i([0,1])$ is the union of $n_s=3(N-1)-\tilde{N}$
rectifiable arcs, has $n_b=2(N-1)-\tilde{N}$
 branching points of triple junction type that lie in $\mathscr{I}^{\epsilon,\delta}\setminus\partial\Omega$
 and has $\tilde{N}$ end points, one for each interval $I_a^{\epsilon,\delta}$, $a\in\tilde{A}$.
\item Every arc $\gamma_i$ has no self intersections and two different arcs $\gamma_i$ and $\gamma_j$,
 $i\neq j$ can intersect only at their extremities which must then coincide with one of the $n_b$ triple points.
\item If $\gamma$ is one of the $n_s$ arcs of $\mathscr{G}^{\epsilon,\delta}$, then $\gamma$ either connects
 two distinct triple points or a triple point with an end point (if $N=\tilde{N}=2$ there
  is a unique $\gamma$ that connects two end points).
\item $\partial S_a^{\epsilon,\delta}$, $a\in A$, is a Jordan curve which (assuming counterclockwise orientation) satisfies
 \begin{equation}
 \begin{split}
 &\mathrm{index}(x,\partial S_a^{\epsilon,\delta})=1,\;\;x\in\Omega_a^{\epsilon,\delta},\\
 &\mathrm{index}(x,\partial S_a^{\epsilon,\delta})=0,\;\;x\in\Omega_{a^\prime}^{\epsilon,\delta},\;a^\prime\neq a.
 \end{split}
 \label{index}
 \end{equation}
  Moreover, for each $\gamma\in\mathscr{G}^{\epsilon,\delta}$, there
  are $a_\gamma\neq a_\gamma^\prime\in A$ such that
\begin{equation}
\gamma\subset\partial S_{a_\gamma}^{\epsilon,\delta}\cap\partial S_{a_\gamma^\prime}^{\epsilon,\delta}.
\label{gamma-in}
\end{equation}
\item If $\bar{\Omega}\setminus\mathscr{I}^{\epsilon,\delta}$ has $N^\prime$ connected components, then $\mathscr{I}^{\epsilon,\delta}$
is homotopically equivalent to a closed ball $\bar{B}$ deprived of $N^\prime$ points, $\tilde{N}$ of which
on $\partial B$. $N^\prime=N$ if $\Omega_a^{\epsilon,\delta}$, $a\in A$ is connected.
 \end{enumerate}
\end{theorem}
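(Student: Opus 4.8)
The plan is topological, and builds on the description of $\mathscr{I}:=\mathscr{I}^{\epsilon,\delta}$ supplied by Theorem~\ref{connect} and its hypotheses: $\mathscr{I}$ is a connected, relatively open subset of the closed topological disk $\bar\Omega$; its complement $\bar\Omega\setminus\mathscr{I}=\bigsqcup_{i=1}^{N'}\omega_i$ is a finite disjoint union of compact connected sets of mutual positive distance, each contained in exactly one $\Omega_a^{\epsilon,\delta}$; $\partial\mathscr{I}\cap\Omega$ is a $C^2$ manifold; and $I_a^{\epsilon,\delta}\subset\mathscr{I}\cap\partial\Omega$ for every $a\in\tilde{A}$. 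First I would pin down the topology of $\mathscr{I}$. Each $\omega_i$ is simply connected: a bounded complementary component $U$ of $\omega_i$ has $\partial U\subset\partial\mathscr{I}$, so $\mathscr{I}\cap U$ is open with closure missing $\partial U$, whence $\mathscr{I}=(\mathscr{I}\cap U)\sqcup(\mathscr{I}\setminus\bar U)$; since $\mathscr{I}$ is connected and meets $\partial\Omega$ away from $\bar U$ we get $\mathscr{I}\cap U=\emptyset$, and then $U$ would lie in a single $\omega_j$, $j\neq i$, contradicting the positive distance between $\omega_i$ and $\omega_j$. By hypothesis~(ii) of Theorem~\ref{connect}, exactly $\tilde{N}$ of the $\omega_i$ meet $\partial\Omega$, each along a single arc, so those are ``bites'' attached to $\partial\Omega$ along an arc and the other $N'-\tilde{N}$ are interior. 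Hence $\bar{\mathscr{I}}$ is homeomorphic to a closed disk with $N'-\tilde{N}$ disjoint open subdisks and $\tilde{N}$ disjoint boundary arcs (the $I_a^{\epsilon,\delta}$) removed; this is statement (vi), and $N'=N$ whenever every $\Omega_a^{\epsilon,\delta}$ is connected.

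\emph{Construction of $\mathscr{G}$ (parts (i), (iii)).} Working in the model of the previous paragraph, I would build the network as a trivalent ``spine'' of $\bar{\mathscr{I}}$ adapted to the partition of $\bar\Omega$ by phases, i.e.\ I would produce relatively open simply connected $S_a^{\epsilon,\delta}\subset\bar\Omega$ with $\Omega_a^{\epsilon,\delta}\subset S_a^{\epsilon,\delta}$, $\bar\Omega=\bigcup_a \overline{S_a^{\epsilon,\delta}}$, $S_a^{\epsilon,\delta}\cap S_{a'}^{\epsilon,\delta}=\emptyset$, and then set $\mathscr{G}^{\epsilon,\delta}:=\bar\Omega\setminus\bigsqcup_a S_a^{\epsilon,\delta}$. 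Such a partition is obtained by a finite sequence of elementary moves: start from a trivalent tree with $\tilde{N}$ leaves handling the case $N=\tilde{N}$ (all phases touching $\partial\Omega$); split off a further phase by inserting into $\mathscr{I}$ one simple arc transversal to an already-constructed region; and, for a phase with several components, enlarge its region through a thin slit inside $\mathscr{I}$ so as to absorb the extra components while staying simply connected. The feasibility of these moves is exactly what the planar picture above guarantees. A generic perturbation puts all interior vertices in trivalent position inside $\mathscr{I}\setminus\partial\Omega$, makes the arcs polygonal (hence rectifiable) with at worst a simple loop based at a vertex, and places the $\tilde{N}$ free endpoints one on each $I_a^{\epsilon,\delta}$. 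Since $\Omega_a^{\epsilon,\delta}\subset S_a^{\epsilon,\delta}$ we get $\mathscr{G}^{\epsilon,\delta}\subset\mathscr{I}^{\epsilon,\delta}$, which is (i), and (iii) holds by construction.

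\emph{Counts and boundary structure (parts (ii), (iv), (v)).} Euler's formula applied to the connected planar graph $\mathscr{G}^{\epsilon,\delta}\cup\partial\Omega$ embedded in the sphere $\bar\Omega\cup\{\infty\}$, with $V=n_b+\tilde{N}$ vertices ($n_b$ interior trivalent vertices plus the $\tilde{N}$ endpoints), $E=n_s+\tilde{N}$ edges ($n_s$ network arcs plus the $\tilde{N}$ boundary arcs cut out by the endpoints) and $F=N+1$ faces (the $N$ regions $S_a^{\epsilon,\delta}$ plus the exterior one), gives $n_s-n_b=N-1$; together with the handshake identity $3n_b+\tilde{N}=2n_s$ inside $\mathscr{G}^{\epsilon,\delta}$ this yields $n_b=2(N-1)-\tilde{N}$ and $n_s=3(N-1)-\tilde{N}$, i.e.\ (ii). For (iv) one notes that a maximal arc joins two vertices, each of which is trivalent or one of the $\tilde{N}$ endpoints (the single arc between two endpoints occurring when $N=\tilde{N}=2$), and that $\tilde{N}\geq 2$ forces $\mathscr{G}^{\epsilon,\delta}\cup\partial\Omega$ to have no bridge. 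For (v): each $S_a^{\epsilon,\delta}$ is relatively open, simply connected, bounded by finitely many arcs of $\mathscr{G}^{\epsilon,\delta}$ and of $\partial\Omega$, so a Schoenflies/Carath\'eodory argument shows $\overline{S_a^{\epsilon,\delta}}$ is a closed topological disk and $\partial S_a^{\epsilon,\delta}$ a Jordan curve whose counterclockwise index is $1$ on $S_a^{\epsilon,\delta}\supset\Omega_a^{\epsilon,\delta}$ and $0$ outside $\overline{S_a^{\epsilon,\delta}}$, which contains every other phase; this is \eqref{index}. Finally, along the interior of an arc $\gamma$ the two flanking regions are locally, hence globally, constant, say $S_{a_\gamma}^{\epsilon,\delta}$ and $S_{a_\gamma'}^{\epsilon,\delta}$, and the no-bridge property gives $a_\gamma\neq a_\gamma'$, which is \eqref{gamma-in}.

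\emph{Main obstacle.} The delicate point is the construction in the second paragraph: producing a network inside the possibly very thin and topologically nontrivial set $\mathscr{I}^{\epsilon,\delta}$ whose free endpoints sit exactly on the prescribed arcs $I_a^{\epsilon,\delta}$, which cuts $\bar\Omega$ into precisely $N$ simply connected regions correctly matched to the phases (so that all components of a disconnected phase are fused into one simply connected $S_a^{\epsilon,\delta}$), and which can be put in trivalent position. Once the planar model of the first paragraph is available this is a finite combinatorial construction, but carrying out the moves and the bookkeeping rigorously --- especially when $N'>N$ --- together with the Schoenflies-type input needed for (v), is where the real work lies.
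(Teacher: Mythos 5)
Your proposal is correct and follows essentially the same route as the paper: establish that $\mathscr{I}^{\epsilon,\delta}$ is a disk with $N'$ holes ($\tilde N$ on the boundary), fuse the components of any disconnected phase through thin channels inside $\mathscr{I}^{\epsilon,\delta}\setminus\partial\Omega$, and then build $\mathscr{G}^{\epsilon,\delta}$ by an inductive sequence of arc insertions (your two elementary moves are exactly the paper's cases I and II). The only notable difference is that you obtain the counts $n_s,n_b$ a posteriori from Euler's formula plus the handshake identity, whereas the paper tracks them incrementally through the induction; both are fine, and the "main obstacle" you flag is indeed where the paper also invests its effort.
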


By the boundary conditions, $\Omega_a^{\epsilon,\delta}\neq\emptyset$, for $a\in\tilde{A}$, but there
 are example where, in spite of the boundary conditions, it results  $\Omega_a^{\epsilon,\delta}\neq\emptyset$
 for some $a\in A\setminus\tilde{A}$ (see \cite{f4} and Section \ref{app} below). A comment on the assumption $\tilde{N}\geq 2$ is in order.
  From the point of view of the complexity of $u^\epsilon$ the cases $\tilde{N}=0$ and $\tilde{N}=1$ can
   be regarded as trivial. Indeed, if $\tilde{N}=0,1$  and $0<\epsilon<<1$, aside from a tiny boundary layer
   of thickness $\mathrm{o}(1)$, $u^\epsilon$ is expected to be approximately equal to one of the zeros of $W$ in
   the whole of $\Omega$. For a results in this direction see Theorem 1.2 in \cite{AF+}.

In the following, to simplify the notation, if no confusion is possible, we omit to indicate the dependence
on $\epsilon$ and $\delta$ and write $\mathscr{I}, \Omega_a, \mathscr{G},\ldots$ instead of  $\mathscr{I}^{\epsilon,\delta}, \Omega_a^{\epsilon,\delta}, \mathscr{G}^{\epsilon,\delta},\ldots$
We refer to $\mathscr{G}=\mathscr{G}^{\epsilon,\delta}$ both as a subset of $\R^2$ and as the collection
 of $n_s$ well determined arcs $\gamma_i:[0,1]\rightarrow\R^2$.
 We also use different representation of the arc $\gamma_i$ with different admissible parameters
 in particular with the arclength. We remark that since $\mathscr{G}$ is the union of a finite number of rectifiable
arcs there is a constant $K>0$ such that
\begin{equation}
\vert\gamma\vert<K,\;\;\gamma\in\mathscr{G},
\label{B-length}
\end{equation}
where $\vert\gamma\vert$ denotes the length of $\gamma$.

We observe that there are infinitely many networks like $\mathscr{G}$ with the properties described in Theorem \ref{network}. From \eqref{gamma-in} it follows that to each pair $a\neq a^\prime\in A$
we can associate the number of the arcs $\gamma$ that satisfy  \eqref{gamma-in} with $a=a_\gamma$ and $a^\prime=a_\gamma^\prime$. This, since the total number of the arcs is equal to $n_s$, implies that the infinitely many networks
that exist can be classified in a finite number of types.

From \eqref{gamma-in} it follows that we can associate to $\mathscr{G}$ the quantity
 \begin{equation}
 \mathscr{F}(\mathscr{G})=\sum_{\gamma\in\mathscr{G}}\sigma_{a_\gamma a_\gamma^\prime}\vert\gamma\vert.
 \label{effe}
 \end{equation}
 This suggests the minimization of $\mathscr{F}$ as a criteria for an optimal choice of a network in the large class of networks with the properties described in Theorem \ref{network}. We address this question in Section \ref{optimal} (see Proposition \ref{opt-G}).

 For later reference we let $\mathcal{G}$ the set of networks which have the properties described in Theorem \ref{network} and satisfies the bound \eqref{B-length}. We let  $\mathcal{G}_f\supset\mathcal{G}$ be the set of network $\mathscr{G}\subset\bar{\Omega}$ which satisfy \eqref{B-length} and are defined as in Theorem \ref{network} by removing the constraint $\mathscr{G}\subset\mathscr{I}$. More precisely $\mathcal{G}_f$ is the set of networks $\mathscr{G}\subset\bar{\Omega}$ that satisfy (ii)-(v) as in Theorem \ref{network} while (i) and (vi) are replaced by

 (i)$_f$ $\bar{\Omega}\setminus\mathscr{G}$ is the union of $N$ pairwise disjoint simply connected relatively open subsets $S_a$, $a\in A$, of $\bar{\Omega}$. If $a\in\tilde{A}$, then $\partial S_a\cap\partial\Omega$ is an arc and
 \[\Gamma_a\subset\partial S_a\cap\partial\Omega,\]

 (vi)$_f$ $\bar{\Omega}\setminus\mathscr{G}$ is homotopically equivalent to a closed ball $\bar{B}$ deprived of $N$ points $\tilde{N}$ of which on $\partial B$.
 
 In Proposition \ref{opt-G} we prove the existence of an optimal network $\hat{\mathscr{G}}$ which minimizes $\mathscr{F}(\mathscr{G})$:
 \[\mathscr{F}(\hat{\mathscr{G}})=\inf_{\mathscr{G}\in\mathcal{G}}\mathscr{F}(\mathscr{G}).\]
 By definition of $\mathcal{G}$ the diffuse interface $\mathscr{I}$ is tightly attached to $\hat{\mathscr{G}}$ which can be regarded as a kind of \emph{spine} which captures the essential features of $\mathscr{I}$ conveying a lot of information on it and, in turn, on the fine structure of minimizers of problem \eqref{min}. $\hat{\mathscr{G}}$ is a one-dimensional geometric object easier to describe than the diffuse interface itself.
  One may conjecture that
  \begin{equation}
  \hat{\mathscr{G}}\approx\mathscr{G}_f,
  \label{ConJ}
  \end{equation}
  where $\mathscr{G}_f$ is a free optimal network, that is a minimizer of $\mathscr{F}(\mathscr{G})$ in the large class $\mathcal{G}_f$ of networks which are not constrained to the diffuse interface:
  \[\mathscr{F}(\mathscr{G}_f)=\inf_{\mathscr{G}\in\mathcal{G}_f}\mathscr{F}(\mathscr{G}).\]
  Obviously establishing that $\hat{\mathscr{G}}$ is actually a perturbation of $\mathscr{G}_f$ as suggested in \eqref{ConJ} would reduce, in some sense, the difficult problem of characterizing the structure of minimizers of \eqref{min} to the much easer geometric problem of determining $\mathscr{G}_f$.
  We show in Theorem \ref{fine} that this approach for the description of the fine structure of minimizers of problem \eqref{min} can actually be used provided we assume:
  \begin{description}
\item[$h_3$] The phases $\Omega_a$, $a\in A$ are connected.
\end{description}

The paper is organized as follows:  in Section \ref{2} we prove Theorem \ref{connect}, Theorem \ref{network} in Section \ref{3}. In Section \ref{optimal} we discuss the existence of the optimal network. In Section \ref{Fine}
we prove Theorem \ref{fine}. Section \ref{app} contains some basic lemmas. 

 \section{Proof of Theorem \ref{connect}.}
 \label{2}
 In this proof we drop the superscripts $\epsilon$ and $\delta$ and write simply $u$, $\mathscr{I}$, $\Omega_a\ldots$ instead of $u^\epsilon$, $\mathscr{I}^{\epsilon,\delta}$, $\Omega_a^{\epsilon,\delta}\ldots$.
  We let $E^\prime=\R^n\setminus E$ be the complement of a set $E\subset\R^n$.

  From $h_1$ and in particular from the assumption that $W_{zz}(a)$ is positive definite it follows that there is
  $\delta_0>0$ such that
  \begin{equation}
  \frac{d}{ds}W(a+s\nu)>0,\;\;s\in(0,2\delta_0),\;\nu\in\SF,\;a\in A.
  \label{increas}
  \end{equation}
  \begin{lemma}
   \label{do-conn}
 Assume the same as in Theorem \ref{connect} and moreover that $\delta_0$ is the number defined in \eqref{increas}. Let $\omega\subset\bar{\Omega}\setminus\mathscr{I}$ be a connected component of $\bar{\Omega}\setminus\mathscr{I}$. Then
  \[\partial\omega\quad\quad\text{is connected}.\]
 \end{lemma}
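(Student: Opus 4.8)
The plan is to argue by contradiction: suppose $\partial\omega$ is disconnected, so $\partial\omega = F_1 \cup F_2$ with $F_1, F_2$ nonempty, closed, and disjoint (hence at positive mutual distance, since under assumption (iii) the boundary components have positive separation). The key dichotomy is: each point of $\partial\omega$ lies either on $\partial\Omega$ or in $\Omega$, and in the latter case it lies on the $C^2$ manifold $\partial\mathscr{I}\cap\Omega$ (by the regular-value hypothesis and Sard/implicit function theorem). Thus $\partial\omega = (\partial\omega\cap\partial\Omega) \cup (\partial\omega\cap\partial\mathscr{I}\cap\Omega)$. The first step is to show $\partial\omega\cap\partial\Omega$ is connected: by assumption (ii) the sets $\Gamma_a$ are arcs (or have connected boundary for $n>2$) and $\partial\Omega$ is connected, so $\Omega_a\cap\partial\Omega$ — which is where $\omega$ can touch the boundary — has a single connected component; one shows $\omega$ meets at most one such piece. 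The remaining task is to understand how $\partial\omega\cap\partial\mathscr{I}\cap\Omega$ attaches to $\partial\omega\cap\partial\Omega$ and whether it can split into disconnected pieces.

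The heart of the argument should be a maximum-principle / monotonicity obstruction to $\partial\omega$ having two components that are ``both in the interior'' or that cannot be joined. Here \eqref{increas} is decisive: on $\bar\Omega\setminus\mathscr{I}$ near $\partial\mathscr{I}$ we have $\min_{a}|u-a|=\delta$, and moving along the manifold $\partial\mathscr{I}$ the relevant $a$ is locally constant, so $f^\epsilon = |u-a|$ satisfies, via the Allen–Cahn equation, an inequality forcing $|u-a|$ to have no interior local maximum in $\omega$ strictly less than… — more precisely, one checks that $w:=|u-a|^2$ (or a suitable function of $u$) is subharmonic on the component of $\omega$ where $a$ is the nearest zero, because $\Delta w = 2|\nabla u|^2 + 2W_z(u)\cdot(u-a) \ge 0$ on the range $|u-a|\le 2\delta_0$ by \eqref{increas}. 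Then the strong maximum principle says $w$ attains its maximum only on $\partial\omega$, and a Hopf-lemma argument at boundary points of $\partial\mathscr{I}$ (where $w=\delta^2$) controls the normal derivative. From this one derives that $\bar\omega$ cannot ``wrap around'' so as to produce two separated boundary pieces, because $\omega$ is contained in a single phase $\Omega_a$ locally and the level set $\{f^\epsilon=\delta\}$ bounding it is, on each such region, a single hypersurface component whose complement in $\bar\Omega$ has $\omega$ on one side only.

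The cleanest route is probably topological rather than purely PDE: I would show $\bar\omega$ is simply connected (for $n=2$) or, in general, that $\bar\Omega\setminus\omega$ is connected — this is equivalent to $\partial\omega$ being connected for ``nice'' domains in $\R^n$ when $\bar\Omega$ is itself simply connected, or more carefully using Alexander duality / Čech cohomology $\check H^0(\partial\omega)\cong \check H^0(\bar\Omega\setminus\omega)$-type statements together with $\bar\Omega$ being a closed ball (up to homotopy). Combining: $\omega$ is a connected component of the open set $\bar\Omega\setminus\mathscr{I}$, and the monotonicity \eqref{increas} forces each such $\omega$ to lie in one phase $\Omega_a$ and to be bounded by a level hypersurface of $f^\epsilon$ that, by assumption (ii) on the boundary trace together with connectedness of $\partial\Omega$, meets $\partial\Omega$ in a connected set (or not at all). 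I expect the main obstacle to be the case $n>2$ and handling the interface-boundary junction $\partial\mathscr{I}\cap\partial\Omega$ rigorously — ensuring transversality there so that $\partial\omega$ is a genuine manifold-with-boundary and that the piece in $\partial\Omega$ glues to the interior piece without creating extra components; the regular-value hypothesis (iii) on both $f^\epsilon$ and $g^\epsilon$ is exactly what is needed to make this gluing clean, and the positive-distance statement for components of $\bar\Omega\setminus\mathscr{I}$ then finishes the contradiction.
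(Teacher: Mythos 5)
Your proposal circles the two ingredients that the paper actually uses --- a maximum-principle mechanism driven by \eqref{increas}, and the equivalence between connectedness of $\partial\omega$ and connectedness of the complement of $\omega^\circ$ (the paper quotes the theorem of Czarnecki \emph{et al.} \cite{KL}: for an open domain $E$, $\partial E$ is connected if and only if $\R^n\setminus E$ is connected) --- but it never assembles them into an argument, and the PDE step as you state it has a genuine gap. The configuration one must exclude is a \emph{bounded} connected component $D$ of $\R^n\setminus\omega^\circ$ (a ``hole'' of $\omega$): since $\partial\Omega$ is connected, $\R^n\setminus\Omega$ is connected and unbounded, so any such $D$ lies in $\bar\Omega$ and satisfies $\partial D\subset\partial\omega^\circ$, whence $\vert u-a\vert\le\delta$ on $\partial D$, where $a$ is the zero with $\omega\subset\Omega_a$. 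The maximum principle then forces $\vert u-a\vert<\delta$ throughout $D$, so $D$ belongs to the phase $\Omega_a$ and $\omega$ was not a maximal connected component --- contradiction; hence $\R^n\setminus\omega^\circ$ is connected and the criterion of \cite{KL} finishes. You instead apply your maximum principle on $\omega$ itself, where $\vert u-a\vert\le\delta$ is already known and nothing new comes out, and your contradiction hypothesis $\partial\omega=F_1\cup F_2$ is never used to produce the hole $D$.

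Second, and more seriously, the mechanism you propose --- subharmonicity of $w=\vert u-a\vert^2$ from $\Delta w=2\vert\nabla u\vert^2+2W_z(u)\cdot(u-a)\ge 0$ --- is only valid where $\vert u(x)-a\vert<2\delta_0$, and inside the hole $D$ there is no a priori bound on $\vert u-a\vert$: the whole point is that $D$ could contain points of $\mathscr{I}$ or of another phase, where $u$ leaves $B_{2\delta_0}(a)$ and the sign of $W_z(u)\cdot(u-a)$ is uncontrolled. A boundary bound $w\le\delta^2$ on $\partial D$ therefore does not propagate inward by the classical maximum principle. This is why the paper invokes the \emph{variational} maximum principle for minimizers (Theorem 4.1 in \cite{afs}): one compares the energy of $u$ with that of the map obtained by projecting $u$ onto $\bar B_\delta(a)$ inside $D$ (admissible because the boundary values lie in $\bar B_\delta(a)$), and \eqref{increas} guarantees the projection does not increase $W$; minimality then yields $\vert u-a\vert<\delta$ in $D$. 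Your purely local computation cannot replace this step. Finally, the material about assumption (ii), transversality at $\partial\mathscr{I}\cap\partial\Omega$ and the case $n>2$ is not needed for this lemma; it enters only later, in the proof of Theorem \ref{connect} itself, when one joins points of $\Omega\setminus\cup_{i}\omega_i$ by arcs skirting the components $\omega_i$.
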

 \begin{proof}
 1. Since $\Omega$ is an open domain and, by assumption $\partial\Omega$ is connected, the complement $\Omega^\prime=\R^n\setminus\Omega$ of $\Omega$ is connected and therefore coincides with its unique
 unbounded connected component. This follows from a theorem of A. Czarnecki \emph{et altri} (see \cite{KL})
 which states: The boundary $\partial E$ of and open domain $E$ is connected if and only if its complement
  $E^\prime$ is connected.

  2. The interior $\omega^\circ$ of $\omega$ is an open domain contained in $\Omega$. It follows: $\Omega^\prime\subset{\omega^\circ}^\prime$. Actually, being unbounded, $\Omega^\prime$ is a subset of the unique unbounded component of ${\omega^\circ}^\prime$. Therefore, if $\tilde{{\omega^\circ}^\prime}\subset{\omega^\circ}^\prime$ is a bounded nonempty connected component of ${\omega^\circ}^\prime$, then it result
  \[\tilde{{\omega^\circ}^\prime}\subset\bar{\Omega}.\]

3.
 Since $\omega\subset\bar{\Omega}\setminus\mathscr{I}$ there exists $a\in A$ such that $\omega\subset\Omega_a$. Hence
 \begin{equation}
 \vert u(x)-a\vert\leq\delta,\;\;x\in\omega.
 \label{<a}
 \end{equation}

This and
 \begin{equation}
 \partial\tilde{{\omega^\circ}^\prime}\subset\partial{\omega^\circ}^\prime=\partial\omega^\circ,
 \label{dodoprime}
 \end{equation}

 yield

  \begin{equation}
 \vert u(x)-a\vert\leq\delta,\;\;x\in\partial\tilde{{\omega^\circ}^\prime}.
 \label{<atilde}
 \end{equation}

 From this \eqref{increas} it follows that we can invoke the maximum principle for minimizers (see Theorem 4.1 in \cite{afs}) and conclude
  \begin{equation}
 \vert u(x)-a\vert<\delta,\;\;x\in\tilde{{\omega^\circ}^\prime}.
 \label{<atilde1}
 \end{equation}

 This together with \eqref{dodoprime} is in contrast with the assumption that $\omega$ is a connected component of $\bar{\Omega}\setminus\mathscr{I}$ and we conclude that ${\omega^\circ}^\prime$ coincides with its unbounded component and the lemma follows from \cite{KL}. The proof is complete.
\end{proof}

 The assumption that $\delta>0$ is a regular value and, in particular, that $\delta$ is a regular value of $g^\epsilon$ implies that $M=\partial\mathscr{I}\cap\Omega$ is a $C^2$ manifold with boundary $\partial{M}=\cup_{a\in\tilde{A}}\partial\Gamma_a$ with transversal intersections with $\partial\Omega$:
  \begin{equation}
  \vert\mu_x\cdot\nu_x\vert<1,\; \text{for}\;x\in\partial M,
  \label{trans}
  \end{equation}
  where $\nu_x$ is  the unit interior normal to $\partial\Omega$ at $x$ and $\mu_x$ is the unit vector which is orthogonal to $M$ at $x$ and points toward the interior of $\mathscr{I}$.

 The smoothness of $\Omega$ implies the existence of $t_0>0$ such that, for each $t\in(-t_0,t_0)$, the map $p_t$ defined by
  \[\partial\Omega\ni x\stackrel{p_t}{\rightarrow} x+t\nu_x,\]
  is a diffeomorphism of $\partial\Omega$ onto its image $\partial\Omega_t=p_t(\partial\Omega)$.
  Similarly, provided $t_0>0$ is sufficiently small, for each $t\in(-t_0,t_0)$, the map $q_t(x)=x+t\mu_x$, $x\in M$, is a diffeomorphism of $M$ onto its image
  $M_t$.

 We are now in the position of completing the proof of Theorem \ref{connect}. Let $\{\omega_i\}_{i\in I}$ a family of connected components of $\bar{\Omega}\setminus\mathscr{I}$. By assumption (i) $\Omega$ is connected. Therefore, given $x_0\neq x_1\in\Omega\setminus\cup_{i\in I}\omega_i$, there is an arc $\gamma:[0,1]\rightarrow\Omega$ which connects $x_0$ to $x_1$ and there is $\tau>0$ such that
 \begin{equation}
 d(\gamma([0,1]),\partial\Omega)\geq\tau.
 \label{fin-dist}
 \end{equation}

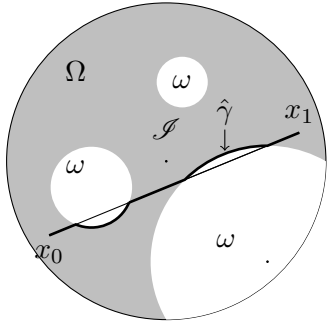
\begin{figure}
  \begin{center}
 \begin{tikzpicture}[scale=.7]
\draw[fill=lightgray] (0,0) circle [radius=3];;
\path[fill=white] (.3,1.5) circle [radius=.48];;
\path[fill=white] (3,0)  arc [radius=2.1961, start angle=60, end angle= 210]
 arc [radius=3, start angle=-90, end angle= 0] to (3,0);
\path[fill=black] (0,0) circle [radius=.025];;
\path[fill=black] (1.902,-1.902) circle [radius=.025];;
\draw[] (.349,-.349)--(1.902,.2941);

\path[fill=white] (-1.7,-1.19695)  arc [radius=.7653, start angle=247.494, end angle= 337.494]
  arc [radius=.7653, start angle=-22.506, end angle= 247.494] to (-1.7,-1.19695);
\draw[] (-2.2,-1.404)--(2.5,.542);
\draw[black,line width=1] (-2.2,-1.404)--(-1.7,-1.19695) arc [radius=.7653, start angle=247.494, end angle= 337.494](-.7,-.78285) to  (.349,-.349)
 arc [radius=2.1961, start angle=135, end angle= 90] to (2.5,.542);
 \node[] at (-1.7,1.7) {$\Omega$};
  \node[below] at (-2.2,-1.404) {$x_0$};
   \node[above] at (2.5,.542) {$x_1$};
  \node[below] at (0,1) {$\mathscr{I}$};
  \node[below] at (-1.7,.2) {$\omega$};
   \node[left] at (1.5,-1.5) {$\omega$};
   \node[] at (.3,1.5)  {$\omega$};
\draw [->] (1.1,.6)--(1.1,.2);
  \node[above] at  (1.1,.5)  {$\hat{\gamma}$};
 \end{tikzpicture}
 \end{center}
\caption{The idea of the proof of Theorem \ref{connect}.}
\label{idea}
\end{figure}

  We show that the connectivity of $\partial\omega$ established in Lemma \ref{do-conn} implies the existence of an arc $\tilde{\gamma}:[0,1]\rightarrow\bar{\Omega}\setminus\cup_{i\in I}\omega_i$ that connects $x_0$ to $x_1$ (see Figure \ref{idea} for an idea of the argument). We can assume that $\gamma((0,1))\cap\cup_{i\in I}\omega_i\neq\emptyset$. Otherwise we can identify $\tilde{\gamma}$ with $\gamma$. Then there exist: $k\geq 1$ and sequences $\{\omega_{i_1},\ldots,\omega_{i_k}\}\subset\{\omega_i\}_{i\in I}$,
$s_1\leq s_1^\prime<s_2\leq s_2^\prime<\ldots<s_k\leq s_k^\prime<1$ such that
\begin{equation}
\begin{split}
&\gamma([0,s_1))\cap\cup_{i\in I}\omega_i=\emptyset,\\
&\gamma(s_1),\gamma(s_1^\prime)\in\partial\omega_{i_1},\\
&\gamma((s_1^\prime,1])\cap\omega_{i_1}=\emptyset,\\\\
&\gamma((s_1^\prime,s_2))\cap\cup_{i\in I}\omega_i=\emptyset,\\
&\gamma(s_2),\gamma(s_2^\prime)\in\partial\omega_{i_2},\\
&\gamma((s_2^\prime,1])\cap\omega_{i_2}=\emptyset,\\
&\cdot\\
&\cdot\\
&\gamma((s_{k-1}^\prime,s_k))\cap\cup_{i\in I}\omega_i=\emptyset,\\
&\gamma(s_k),\gamma(s_k^\prime)\in\partial\omega_{i_k},\\
&\gamma((s_k^\prime,1])\cap\cup_{i\in I}\omega_i=\emptyset.
\end{split}
\label{seqs}
\end{equation}
From \eqref{fin-dist} it follows that, if $t>0$ is sufficiently small, there exist $0<s_1^-<s_1\leq s_1^\prime<s_1^+<s_2$  such that
\[\gamma(s_1^\pm)\in m_{1,t},\]
where $m_{1,t}\subset M_t$ is the diffeomorphic image of $m_1=\partial\omega_{i_1}\cap\Omega$ under the map $q_t$ introduced above. Set $x^\pm=q_t^{-1}(\gamma(s_1^\pm))$ where $q_t^{-1}$ is the invese of $q_t$. The points $x^\pm$ belong to $m_1\subset\partial\omega_{i_1}$. Since, by Lemma \ref{do-conn}, $\partial\omega_{i_1}$ is connected there is and arc $\eta:[s_1^-,s_1^+]\rightarrow\partial\omega_{i_1}$ that connectes $x^-$ to $x^+$
see Figure \ref{om-dO}.

Consider first the case where $\omega_{i_1}\cap\partial\Omega=\emptyset$. In this case we have $m_1=\partial\omega_{i_1}$ and therefore $\eta([s_1^-,s_1^+])\subset m_1$ and the arc $q_t(\eta([s_1^-,s_1^+]))$
connects $\gamma(s_1^-)$ to $\gamma(s_1^+)$ and, by \eqref{seqs} and the assumed smallness of $t>0$, is fully contained in $\bar{\Omega}\setminus\cup_{i\in I}\omega_i$. It follows that the union of the arc $\gamma([0,s_1^-))$ and  the arc $q_t(\eta([s_1^-,s_1^+])$ is an arc that connects $x_0$ to $\gamma(s_1^+)$ and does not intersect any of the $\omega_i$, $i\in I$. Actually for the above construction it is not necessary that $m_1=\partial\omega_{i_1}$ but it sufficies that  $\eta([s_1^-,s_1^+])\subset m_1$. Therefore it remains to consider the case where
\begin{equation}
\eta([s_1^-,s_1^+])\cap\partial m_1\neq\emptyset.
\label{remain}
\end{equation}
This is possible only if $\omega_{i_1}\cap\partial\Omega=\Gamma_a$ for some $a\in\tilde{A}$ and $\partial m_1=\partial\Gamma_a$, see Figure \ref{om-dO}. Indeed $\Gamma_a\setminus\omega_{i_1}\cap\partial\Omega\neq\emptyset$
would contradict assumption (ii).

\begin{figure}
  \begin{center}
\begin{tikzpicture}[scale=.55]
\draw[fill=black,black] (2.16,1.30) circle [radius=.04];
\draw[fill=black,black] (2.16,-1.30) circle [radius=.04];
\draw[] (0,0) to [out=90,in=180] (3,2);
\draw[] (0,0) to [out=270,in=180] (3,-2);
\draw[] (2,0) to [out=90,in=180] (3,2);
\draw[] (2,0) to [out=270,in=180] (3,-2);
\draw[] (4,0) to [out=90,in=0] (3,2);
\draw[] (4,0) to [out=270,in=0] (3,-2);
\draw[blue,very thick] (2,0) to [out=91,in=251] (2.16,1.30);
\draw[blue,very thick] (2,0) to [out=269,in=109] (2.16,-1.30);
\draw[red,very thick] (1.75,1.30)-- (2.16,1.30);
\draw[red,very thick] (1.75,-1.30)-- (2.16,-1.30);
\draw[blue,very thick] (1.75,1.30) to [out=90,in=230] (2.25,2.5);
\draw[blue,very thick] (1.75,-1.30) to [out=270,in=30] (1.25,-2.5);
\draw[red]  (2.16,-1.30) to [out=60,in=300] (2.16,1.30);
\draw[fill=black,black] (2.16,1.30) circle [radius=.04];
\draw[fill=black,black]  (1.75,1.30) circle [radius=.04];
\draw[fill=black,black]  (1.75,-1.30) circle [radius=.04];
\draw[]  (4.3,0) to [out=90,in=300] (3,4.5);
\draw[]  (4.3,0) to [out=270,in=60] (3,-4.5);
\node[left]at (1.75,1.30){$x^-$};
\node[left]at (1.75,-1.30){$x^+$};
\node[]at(3.3,-.6) {$\Gamma_a$};
\node[]at(1,.3) {$\omega_{i_1}$};
\node[] at (2.26,2.60){$\gamma(s_1^-)$};
\node[] at (1.26,-2.6){$\gamma(s_1^+)$};
\node[] at (1.5,3.5){$\Omega$};
\node[] at (2.6,0){$\eta$};
\node[] at (-.95,0) {$\partial\omega_{i_1}$};
\draw[->](-.6,0)--(0,0);
\end{tikzpicture}
\end{center}
\caption{The points $x_1^\pm=q_t^{-1}(\gamma(s_1^\pm)$ and the arc $\eta:[s_1^-,s_1^+]\rightarrow\partial\omega_{i_1}$. }
\label{om-dO}
\end{figure}
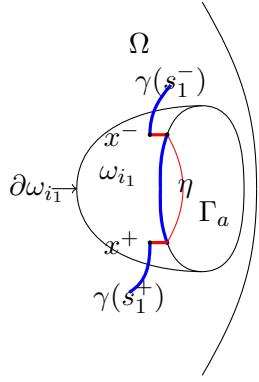

If $n=2$, the connectivity of $\partial\omega_{i_1}$ stated in Lemma \ref{do-conn} implies that $\partial\omega_{i_1}$ is a Jordan curve and, by assumption (ii) $\Gamma_a$ is an arc. It follows that the points $x_1^-$ and $x_1^+$ divide  $\partial\omega_{i_1}$ in two arcs and that one of them contains $\Gamma_a$ while the other is fully contained in $\Omega$. This reduces the analysis to the case discussed above where $\eta([s_1^-,s_1^+])\cap\partial m_1=\emptyset$.
Therefore we assume \eqref{remain} and $n>2$ and observe that, given $\theta\in(0,t_0)$, if $t\in(0,t_0)$ is sufficiently small, $m_{1,t}\cap\partial\Omega_\theta$ is a $n-2$ dimensional manifold  which is diffeomorphic to $\partial\Gamma_a$ and is contained in $\Omega\setminus\cup_{i\in I}\omega_i$  
From \eqref{remain} it follows that, if $t>0$ is sufficiently small, there exist $s_1^-<\tau_1^-\leq\tau_1^+<s_1^+$ such that
\begin{equation}
\begin{split}
&q_t^{-1}(\eta(s))\not\in\partial\Omega_\theta,\;\;s\in[s_1^-,\tau_1^-),\\
&q_t^{-1}(\eta(\tau_1^-))\in m_{1,t}\cap\Omega_\theta,\\
&q_t^{-1}(\eta(\tau_1^+))\in m_{1,t}\cap\Omega_\theta,\\
&q_t^{-1}(\eta(s))\not\in\partial\Omega_\theta,\;\;s\in(\tau_1^+,s_1^+].
\end{split}
\label{taupm}
\end{equation}
From (ii) $\partial\Gamma_a=\partial m_1$ is connected and so is  $m_{1,t}\cap\partial\Omega_\theta$ which is diffeomorphic to  $\partial\Gamma_a$. Therefore there exists an arc $\zeta([\tau_1^-,\tau_1^+])$ which is contained in $m_{1,t}\cap\partial\Omega_\theta$ and connects $q_t^{-1}(\eta(\tau_1^-))$ to $q_t^{-1}(\eta(\tau_1^+))$. From this and \eqref{seqs} we conclude that the arc sum of $\gamma([0,s_1^-))$, $q_t^{-1}(\eta([s_1^-,\tau_1^-))$,  $\zeta([\tau_1^-,\tau_1^+))$ and $q_t^{-1}([\tau_1^+,s_1^+])$ is an arc that connect $\gamma(s_1^-)$ to $\gamma(s_1^+)$ and is contained in $\Omega\setminus\cup_{i\in I}\omega_i$.

By iterating the argument developed above for $\omega_{i_1}$ we successively introduce $s_2^+,\ldots,s_k^+=1$ and corresponding arcs that connect $x_0$ to $\gamma(s_2^+)\ldots$, to $\gamma(1)=x_1$ and are contained in $\Omega\setminus\cup_{i\in I}\omega_i$. The last arc that connects $x_0$ to $x_1$ can be defined to be the arc $\tilde{\gamma}$. The proof of Theorem \ref{connect} is complete.

 \section{Proof of Theorem \ref{network}.}
 \label{3}
 As in the  proof of Theorem \ref{connect} we drop the superscripts $\epsilon$ and $\delta$ and write simply $u$, $\mathscr{I}$, $\mathscr{G}$, $\Omega_a\ldots$ instead of $u^\epsilon$, $\mathscr{I}^{\epsilon,\delta}$, $\mathscr{G}^{\epsilon,\delta}$ $\Omega_a^{\epsilon,\delta}\ldots$.

 From the assumption that $\delta$ is a regular value, the implicit function theorem and the fact that we are in two dimension: $\Omega\subset\R^2$, it follows that the boundary of each connected component $\omega$ of $\Omega\setminus\mathscr{I}$ is a curve which is of class $C^2$ if $\omega\cap\partial\Omega=\emptyset$ or, if $\omega\cap\partial\Omega\neq\emptyset$ is the union of a $C^2$ curve $\partial\omega\cap\Omega$ with one of the arcs  $\Gamma_{a}$, $a\in\tilde{A}$. This and Lemma
  \ref{do-conn} imply that $\partial\omega$ is a Jordan curve  and therefore that $\omega$ is homeomorphic to a closed ball (see \cite{stoker} pp. 43-45). Since by assumption $\partial\Omega$ is connected the same is true for $\Omega$. It follows that $\mathscr{I}$ is homotopically equivalent to a closed ball $\bar{B}$ deprived of $N^\prime$ points $\tilde{N}$ of which on the boundary $\partial B$ of $B$, here $N^\prime\geq N$ is the number of connect components of $\bar{\Omega}\setminus\mathscr{I}$.

  Assume that $\omega_1$ and $\omega_2$ are two distinct connected component of $\Omega_a$, for some $a\in A$. Since $\mathscr{I}$ is connected and $\mathscr{I}\setminus\partial\Omega$ is open there is an arc $\eta:[0,1]\rightarrow\mathscr{I}$ that connects $\omega_1$ to $\omega_2$ and satisfies $\eta(0)\in\omega_1$, $\eta(1)\in\omega_2$ and $\eta((0,1))\subset\mathscr{I}\setminus\partial\Omega$. Actually we can assume that, for $0<r$ sufficiently small, $\eta^r=\cup_{x\in\eta([0,1])}B_r(x)$ is a little channel that connects $\omega_1$ and $\omega_2$ and such that $\eta^r\setminus(\omega_1\cup\omega_2)
\subset\mathscr{I}\setminus\partial\Omega$. The boundary of the set $\omega_1\cup\eta^r\cup\omega_2$ is a Jordan curve. This follows from the construction of $\eta^r$ and from fact that $\omega_i$, $i=1,2$ are homeomorphic to closed balls. It follows that also $\omega_1\cup\eta^r\cup\omega_2$ is homeomorphic to a closed ball. Hence $\mathscr{I}\setminus\eta^r$ is homotopically equivalent to a ball deprived of $N^\prime-1$ points. The process can be iterated and after a finite number of steps we end up with a set of the form $\hat{\mathscr{I}}=\bar{\Omega}\setminus\cup_{a\in A}\hat{\Omega}_a$
 where $\hat{\Omega}_a$, $a\in A$ are non overlapping closed sets that satisfy $\Omega_a\subset\hat{\Omega}_a$, $a\in A$ and are homeomorphic to closed ball.
 \vskip.2cm
\noindent To prove the existence of  $\mathscr{G}$ we use an induction argument. We
 assume:
\vskip.2cm
\noindent (i) there are $\nu,\tilde{\nu}\in\N$ such that
\begin{equation}
2\leq\tilde{\nu}\leq\tilde{N},\quad\quad\tilde{\nu}\leq\nu<N,
\label{IndHyp}
\end{equation}
and a connected network $G_{\nu,\tilde{\nu}}\subset\hat{\mathscr{I}}$ composed of $\nu_s=3(\nu-1)-\tilde{\nu}$ arcs that connect $\nu_b=2(\nu-1)-\tilde{\nu}$ branching points of triple junction type and $\tilde{\nu}$ end points in the intervals $I_a$, $a\in A$ in such a way that each of the $I_a$ contains at most one of the $\tilde{\nu}$ end points.
\vskip.2cm
\noindent(ii)  $\bar{\Omega}\setminus G_{\nu,\tilde{\nu}}=\cup_{j=1}^\nu S_j^{\nu,\tilde{\nu}}$, where $S_j^{\nu,\tilde{\nu}}$, $j=1,\ldots,\nu$ is a relatively open simply connected set such that
\[\text{either}\quad\hat{\Omega}_a\subset S_j^{\nu,\tilde{\nu}}\quad\text{or}\quad\hat{\Omega}_a\cap  S_j^{\nu,\tilde{\nu}}=\emptyset,\;\;a\in A,\;j=1,\ldots,\nu.\]
\vskip.2cm
We show that, if $\nu<N$ this induction hypothesis implies the existence of a network $G_{\nu+1,\tilde{\nu}}$ or $G_{\nu+1,\tilde{\nu}+1}$ which satisfies (i) and (ii) above for $\nu+1,\tilde{\nu}$ or
for $\nu+1,\tilde{\nu}+1$.

From (ii) we have that $S_j^{\nu,\tilde{\nu}}\setminus\cup_{a\in A}\hat{\Omega}_a$, $j=1,\ldots,\nu$ is homotopically equivalent to a closed ball deprived of a certain number of points some of which may be on the boundary. This is the case if $\partial S_j^{\nu,\tilde{\nu}}$ contains some of the arcs $\Gamma_a$. If $\nu<N$ one of the set  $S_j^{\nu,\tilde{\nu}}$ contains at least two of the sets $\hat{\Omega}_a$.

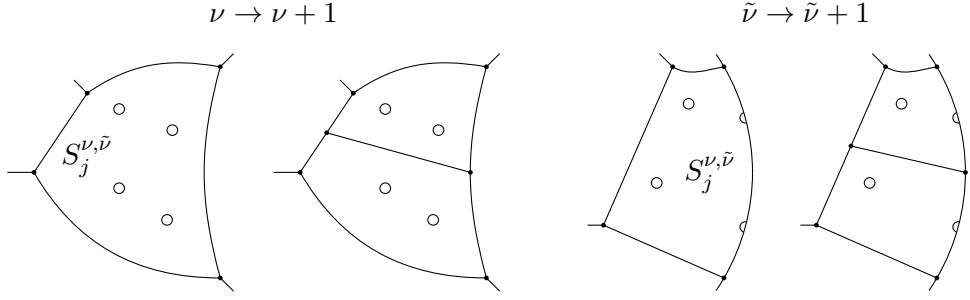
\begin{figure}
  \begin{center}
 \begin{tikzpicture}[scale=.7]
\draw [] (-8.9,-.3) circle [radius=.1];;
\draw [] (-8.,-.9) circle [radius=.1];;
\draw [] (-7.9,.8) circle [radius=.1];;
\draw [] (-8.9,1.2) circle [radius=.1];;
\draw [] (-3.9,-.3) circle [radius=.1];;
\draw [] (-3.,-.9) circle [radius=.1];;
\draw [] (-2.9,.8) circle [radius=.1];;
\draw [] (-3.9,1.2) circle [radius=.1];;
\draw [] (-10.5,0)--(-9.5,1.5);
\draw[] (-7,2) to [out=255,in=105] (-7,-2);
\draw[] (-9.5,1.5) to [out=35,in=170] (-7,2);
\draw[] (-7,-2) to [out=180,in=300]  (-10.5,0);
\draw [] (-10.5,0)-- (-11,0);
\draw [] (-7,2)-- (-6.75,2.25);
\draw [] (-7,-2)-- (-6.75,-2.25);
\draw [] (-9.5,1.5)-- (-9.75,1.75);
\path [fill=black] (-7,2) circle [radius=.045];;
\path [fill=black] (-7,-2) circle [radius=.045];;
\path [fill=black] (-10.5,0) circle [radius=.045];;
\path [fill=black] (-9.5,1.5) circle [radius=.045];;
\path [fill=black] (-5,.75) circle [radius=.045];;
\path [fill=black] (-2.3,0) circle [radius=.045];;
\path [fill=black] (-2,2) circle [radius=.045];;
\path [fill=black] (-2,-2) circle [radius=.045];;
\path [fill=black] (-5.5,0) circle [radius=.045];;
\path [fill=black] (-4.5,1.5) circle [radius=.045];;
\draw [] (-5,.75)-- (-2.3,0);
\draw [] (-5.5,0)-- (-6,0);
\draw [] (-2,2)-- (-1.75,2.25);
\draw [] (-2,-2)-- (-1.75,-2.25);
\draw [] (-4.5,1.5)-- (-4.75,1.75);
\draw [] (-5.5,0)--(-4.5,1.5);
\draw[] (-2,2) to [out=255,in=105] (-2,-2);
\draw[] (-4.5,1.5) to [out=35,in=170] (-2,2);
\draw[] (-2,-2) to [out=180,in=300]  (-5.5,0);
\draw [] (3,0) arc [radius=4, start angle=0, end angle=30];
\draw [] (3,0) arc [radius=4, start angle=0, end angle=-30];
\draw [] (2.4641,2) to [out=190,in=330] (1.5,2);
\draw [] (2.4641,-2)--(.2,-1)-- (1.5,2);
\path [fill=black] (2.4641,2) circle [radius=.045];;
\path [fill=black] (2.4641,-2) circle [radius=.045];;
\path [fill=black] (.2,-1) circle [radius=.045];;
\path [fill=black] (1.5,2) circle [radius=.045];;
\draw [] (2.4641,2) arc [radius=4, start angle=30, end angle=34];
\draw [] (2.4641,-2) arc [radius=4, start angle=-30, end angle=-34];
\draw [] (.2,-1)--(-.1,-1);
\draw [] (1.5,2)--(1.25,2.25);
\draw [] (1.2,-.2) circle [radius=.1];;
\draw [] (1.8,1.3) circle [radius=.1];;
\draw [] (2.8377,1.1319) arc [radius=.1, start angle=105, end angle=285];
\draw [] (2.8897,-.9386) arc [radius=.1, start angle=75, end angle=255];
\draw [] (7,0) arc [radius=4, start angle=0, end angle=30];
\draw [] (7,0) arc [radius=4, start angle=0, end angle=-30];
\draw [] (6.4641,2) to [out=190,in=330] (5.5,2);
\draw [] (6.4641,-2)--(4.2,-1)-- (5.5,2);
\path [fill=black] (6.4641,2) circle [radius=.045];;
\path [fill=black] (6.4641,-2) circle [radius=.045];;
\path [fill=black] (4.2,-1) circle [radius=.045];;
\path [fill=black] (5.5,2) circle [radius=.045];;
\path [fill=black] (7,0) circle [radius=.045];;
\path [fill=black] (4.85,.5) circle [radius=.045];;
\draw [] (6.4641,2) arc [radius=4, start angle=30, end angle=34];
\draw [] (6.4641,-2) arc [radius=4, start angle=-30, end angle=-34];
\draw [] (4.2,-1)--(3.9,-1);
\draw [] (5.5,2)--(5.25,2.25);
\draw [] (4.85,.5)--(7,0);
\draw [] (6.8377,1.1319) arc [radius=.1, start angle=105, end angle=285];
\draw [] (6.8897,-.9386) arc [radius=.1, start angle=75, end angle=255];
\node[] at (-6,3) {$\nu\rightarrow\nu+1$};
\node[] at (4,3) {$\tilde{\nu}\rightarrow\tilde{\nu}+1$};
\node[] at (2.2,0) {$S_j^{\nu,\tilde{\nu}}$};
\node[] at (-9.5,.3) {$S_j^{\nu,\tilde{\nu}}$};
\draw [] (5.2,-.2)  circle [radius=.1];;
\draw [] (5.8,1.3) circle [radius=.1];;
\end{tikzpicture}
\end{center}
\caption{Examples of cases I) and II).}
\label{III}
\end{figure}

We consider two possibilties (see Figure \ref{III}): I) There exists $a\in A$ such that $\hat{\Omega}_a\subset(S_j^{\nu,\tilde{\nu}}\setminus\partial\Omega)$ and II) There exist $a_1, a_2\in A$ such that $S_j^{\nu,\tilde{\nu}}\cap\hat{\Omega}_a=\Gamma_a,\;\;a\in\{a_1,a_2\}$.

I) In this case the connectivity of $S_j^{\nu,\tilde{\nu}}\setminus\cup_{a\in A}\hat{\Omega}_a$ implies the existence of an arc $\eta:[0,1]\rightarrow S_j^{\nu,\tilde{\nu}}\setminus\cup_{a\in A}\hat{\Omega}_a$ with the extreme points on $\partial S_j^{\nu,\tilde{\nu}}\setminus\partial\Omega$ and such that the open set $S_j^\prime$, bounded by the Jordan curve union of $\eta([0,1])$ and of the subarc of $\partial S_j^{\nu,\tilde{\nu}}\setminus\partial\Omega$ determined by $\eta(0)$ and $\eta(1)$, contains at least one of the sets $\hat{\Omega}_a$. Moreover the points $\eta(0)$ and $\eta(1)$ can be chosen so that they do not coincides with any of the triple points belonging to $\partial S_j^{\nu,\tilde{\nu}}\setminus\partial\Omega$. It follows that the network $G_{\nu,\tilde{\nu}}\cup\eta([0,1])$ has $\nu_b+2=2((\nu+1)-1)-\tilde{\nu}$ triple junction branching points and $\nu_s+3=3((\nu+1)-1)-\tilde{\nu}$ arcs. The latter statement follows from the fact the $\eta(0)$ divides the arc to which belongs in two parts and the same is true with for $\eta(1)$. These considerations and the observation that the only difference between $\bar{\Omega}\setminus G_{\nu,\tilde{\nu}}$ and $\bar{\Omega}\setminus(G_{\nu,\tilde{\nu}}\cup\eta([0,1]))$ consists in the fact that $S_j^{\nu,\tilde{\nu}}$ is replaced by $S_j^\prime$ and $S_j^{\nu,\tilde{\nu}}\setminus\bar{S}_j^\prime$ show that we have construct a network
$G_{\nu+1,\tilde{\nu}}=G_{\nu,\tilde{\nu}}\cup\eta([0,1])$ which satisfies the induction hypothesis for $\nu+1,\tilde{\nu}$.

II) In this case we have necessarily $I_a\subset\partial S_j^{\nu,\tilde{\nu}}$ for some $a\in A$ and as a  consequence $S_j^{\nu,\tilde{\nu}}$ can be divided in two parts by an arc $\eta:[0,1]\rightarrow S_j^{\nu,\tilde{\nu}}\setminus\cup_{a\in A}\hat{\Omega}_a$ with the initial point $\eta(0)\in I_a$ and the end point $\eta(1)\in(\partial S_j^{\nu,\tilde{\nu}}\cap\Omega)$. Note that, since from \eqref{IndHyp} we have $\nu\geq 2$, $\partial S_j^{\nu,\tilde{\nu}}\cap\Omega$ is nonempty and $\eta(1)$ can be chosen in the interior of one of the arcs that compose $\partial S_j^{\nu,\tilde{\nu}}\cap\Omega$. The network $G_{\nu,\tilde{\nu}}\cup\eta([0,1])$ has $\tilde{\nu}+1$ end points, $n_b+1=2((\nu+1)-1)-(\tilde{\nu}+1)$ branching points and $n_s+2=3((\nu+1)-1)-(\tilde{\nu}+1)$ arcs. It follows that $G_{\nu+1,\tilde{\nu}+1}=G_{\nu,\tilde{\nu}}\cup\eta([0,1])$ is a network that satisfies the induction hypothesis for $\nu+1,\tilde{\nu}+1$.
\vskip.2cm
To complete the proof it remains to show the existence of a network $G_{\nu,\tilde{\nu}}\subset\hat{\mathscr{I}}$ with the properties described above for $\tilde{\nu}=\nu=2$. With this choice of $\nu$ and $\tilde{\nu}$ we have $\nu_b=0$ and $\nu_s=1$. It follows that $G_{2,2}$ must consist in a unique arc $\eta:[0,1]\rightarrow\hat{\mathscr{I}}$ with end points in two distinct interval $I_a$. The existence of such an arc follows from the assumption $\tilde{N}\geq 2$ and the connectivity of $\hat{\mathscr{I}}$. The proof is complete.

\section{The existence of the optimal network.}
\label{optimal}
In this section we consider the problem, posed at the end of the Introduction, of the existence of an optimal network in the set of networks with the properties listed in Theorem \ref{network}.

We let $\mathcal{G}=\mathcal{G}^{\epsilon,\delta}$ the class of the networks that
satisfies the properties listed in Theorem \ref{network} and the bound \eqref{B-length}.
 $\mathcal{G}$ with the distance
 \begin{equation}
 d(\mathscr{G},\mathscr{G}^\prime)=\sum_{i=1}^{n_s}\|\gamma_i-\gamma_i^\prime\|_{C^0([0,1];\R^2)},
 \label{metric}
 \end{equation}
is a metric space and the closure $(\bar{\mathcal{G}},d)$ of $(\mathcal{G},d)$
is a compact space. This follows from the fact that
 \eqref{B-length} implies the existence of a parametrization of $\gamma\in\mathscr{G}$ that makes the
 map $\gamma:[0,1]\rightarrow\R^2$ a lipschitz map with lipschitz constant $K$. Therefore,
  if $\{\mathscr{G}_j\}_{j=1}^\infty$ is a sequence in $\mathcal{G}$,
  then there is a subsequence  $\{\mathscr{G}_{j_k}\}_{k=1}^\infty$ that converges to some $\mathscr{G}
  \in\bar{\mathcal{G}}$. We denote $S_{a,k}$ the sets associated to $\mathscr{G}_{j_k}$, $k=1,\ldots$ by \eqref{dec}.

  $\mathscr{G}$ as the limit of the sequence  $\{\mathscr{G}_{j_k}\}_{k=1}^\infty$ has the following properties:
  \begin{enumerate}
  \item $\mathscr{G}\subset\bar{\mathscr{I}}$ is a connected network that separate
   the phases in the sense that
   \begin{equation}
   \begin{split}
 &\bar{\Omega}\setminus\mathscr{G}=\cup_{a\in A}S_a,\\
 &\Omega_a^\circ\subset S_a,\;\,a\in A,
 \end{split}
 \label{G0-sep}
   \end{equation}
 where $S_a$, $a\in A$, are relatively open subsets of $\bar{\Omega}$ and $E^\circ$ is the interior of $E$.
\item $\mathscr{G}=\cup_{i=1}^{n_s}\gamma_i([0,1])$ is the union of $n_s=3(N-1)-\tilde{N}$ rectifiable
 arcs and has $n_b=2(N-1)-\tilde{N}$ branching points of triple junction
 type and $\tilde{N}$ end
 points, one for each interval $\bar{I}_a$, $a\in A$.

  Some of the $\gamma_i:[0,1]\rightarrow\bar{\mathscr{I}}$ may reduce to constant maps. The branching
  points may lie on $\partial\mathscr{I}$ and some of them may coincide.

 \item Each arc $\gamma:[0,1]\rightarrow\R^2$ in $\mathscr{G}$ can be parametrized as a lipschitz map with
 lipschitz constant $K$, $K$ the constant in \eqref{B-length}.

  \item Every arc $\gamma_i$ can be tangent to itself without self intersections. Two different arcs $\gamma_i$
   and  $\gamma_j$, $i\neq j$ can be tangent to each other without intersections.

 \item Point (v) in Theorem \ref{network} passes in the limit in the form: the arcs $\gamma\in\mathscr{G}$
  defined
 by
 \[\gamma=\lim_{k\rightarrow\infty}\gamma_k,\;\;\gamma_k\subset\partial S_{a,k},\]
 and the arc $\partial\Omega\cap\partial S_a$
 determine a circuit $\mathscr{C}_a$, $a\in A$ such that
  \begin{equation}
 \begin{split}
 &\mathrm{index}(x,\mathscr{C}_a)=1,\;\;x\in\Omega_a^\circ,\\
 &\mathrm{index}(x,\mathscr{C}_a)=0,\;\;x\in\Omega_{a^\prime}^\circ,\;a^\prime\neq a.
 \end{split}
 \label{index-}
 \end{equation}
 For each $\gamma\in\mathscr{G}$ which does not reduce to a constant there are $a_\gamma\neq a_\gamma^\prime\in A$
  such that
 \begin{equation}
 \gamma([0,1])\subset\mathscr{C}_{a_\gamma}\cap\mathscr{C}_{a_\gamma^\prime}.
 \label{gamma-in-}
 \end{equation}
  \end{enumerate}

  Note that we don't claim that $S_a$, $a\in A$ is connected.
  Indeed we can not a priori exclude that, unless we assume the connectivity of $\Omega_a^\circ$, $a\in A$, some  $S_a$
  is the union of two of more connected components. For $\mathscr{G}\in\partial\mathcal{G}$,
  $\mathscr{C}_a=\lim_{k\rightarrow\infty}\partial S_{a,k}$ may not coincide with $\partial S_a$
  but we have the inclusion
  \[\partial S_a\subset\mathscr{C}_a.\]
    Note also that in \eqref{index-} we have restricted
  $x$ to the interior of $\Omega_a$, $a\in A$.

  From \eqref{gamma-in-} it follows that $\mathscr{F}(\mathscr{G})$ is well
  defined also for $\mathscr{G}\in\partial\mathcal{G}$ and we can write
  \[\mathscr{F}(\mathscr{G})
  =\sum_{\gamma\in\mathscr{G}}\sigma_{a_\gamma a_\gamma^\prime}\vert\gamma\vert,\;\;\mathscr{G}\in\bar{\mathcal{G}}.\]

  \begin{proposition}
  \label{opt-G}
  There exists a minimizer $\hat{\mathscr{G}}\in\bar{\mathcal{G}}$ of the problem
  \[\mathscr{F}(\hat{\mathscr{G}})=\min_{\mathscr{G}\in\bar{\mathcal{G}}}\mathscr{F}(\mathscr{G}).\]
  Moreover $\gamma\in\hat{\mathscr{G}}$ is piecewise $C^2$.
  \end{proposition}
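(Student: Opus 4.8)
The plan is to construct $\hat{\mathscr{G}}$ by the direct method on the compact metric space $(\bar{\mathcal{G}},d)$, and then to obtain the piecewise $C^2$ regularity from a local comparison argument identifying each non‑constant arc of a minimizer as a length‑minimizing curve inside the closed set $\bar{\mathscr{I}}$, i.e.\ as a geodesic for the obstacle problem whose obstacle is the $C^2$ level set $\partial\mathscr{I}\cap\Omega=\{f^\epsilon=\delta\}$.

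First I would take a minimizing sequence $\{\mathscr{G}_j\}\subset\bar{\mathcal{G}}$ and invoke the compactness of $(\bar{\mathcal{G}},d)$ already established above to extract $\mathscr{G}_{j_k}\to\hat{\mathscr{G}}\in\bar{\mathcal{G}}$ in the metric $d$. Passing once more to a subsequence I may assume that the combinatorial type of $\mathscr{G}_{j_k}$ — the assignment to each of the $n_s$ arcs of the labelled pair $(a_\gamma,a_\gamma^\prime)$ from \eqref{gamma-in-} — is independent of $k$, since only finitely many such assignments exist. Using the Lipschitz parametrizations of constant $K$ supplied by \eqref{B-length}, uniform convergence of the $i$‑th arcs gives $|\gamma_i|\le\liminf_k|\gamma_{i,k}|$ by lower semicontinuity of length under $C^0$ convergence; because the weights $\sigma_{a_\gamma a_\gamma^\prime}>0$ are now fixed along the subsequence, summing over $i$ yields $\mathscr{F}(\hat{\mathscr{G}})\le\liminf_k\mathscr{F}(\mathscr{G}_{j_k})=\min_{\bar{\mathcal{G}}}\mathscr{F}$. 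Arcs that have collapsed to constant maps contribute $0$ on both sides and cause no difficulty, so $\hat{\mathscr{G}}$ is a minimizer.

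For regularity I would fix such a minimizer $\hat{\mathscr{G}}$ and one of its non‑constant arcs $\gamma$, with labels $a_\gamma\ne a_\gamma^\prime$, and isolate a subinterval $[t_0,t_1]$ for which $\gamma((t_0,t_1))\subset\mathscr{I}\setminus\partial\Omega$ and contains none of the triple points. The key step is to observe that $\gamma|_{[t_0,t_1]}$ must minimize Euclidean length among curves contained in $\bar{\mathscr{I}}$ joining $\gamma(t_0)$ to $\gamma(t_1)$: replacing it by a $C^0$‑close competitor with the same endpoints, still inside $\bar{\mathscr{I}}$, leaves all other arcs, all triple points and all labels untouched, still belongs to $\bar{\mathcal{G}}$ (the sets $S_a$ stay relatively open, pairwise disjoint, simply connected and correctly nested, and the index conditions \eqref{index} are stable under small perturbations), and changes $\mathscr{F}$ only by $\sigma_{a_\gamma a_\gamma^\prime}$ times the variation of length — so minimality forbids a strict decrease. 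Since $\partial\mathscr{I}\cap\Omega$ is a $C^2$ curve by the regular‑value hypothesis (iii) of Theorem \ref{connect} and the implicit function theorem, the classical description of shortest paths in a planar region with $C^2$ boundary gives that $\gamma|_{[t_0,t_1]}$ is $C^{1,1}$ and is a finite concatenation of straight segments lying in the interior of $\mathscr{I}$ and of sub‑arcs of the $C^2$ curve $\partial\mathscr{I}\cap\Omega$, hence piecewise $C^2$. The exceptional portions are handled similarly: near a triple point the comparison applies to each incident arc up to its endpoint, and near the free endpoint of a terminal arc on the open sub‑arc $I_a\subset\partial\Omega$ the first‑variation condition forces $\gamma$ either to meet $\partial\Omega$ orthogonally or to run along the $C^2$ curve $\partial\Omega$; concatenating over finitely many pieces proves every $\gamma\in\hat{\mathscr{G}}$ is piecewise $C^2$.

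I expect the main obstacle to be twofold. First, one must verify in detail that $\bar{\mathcal{G}}$ is stable under the local one‑arc competitors used above — that perturbing a single arc with fixed endpoints preserves the simple connectedness and correct nesting of the $S_a$ and the index identities \eqref{index} — which amounts to re‑running, in a perturbative form, the topological bookkeeping of Section \ref{3}. Second, and more delicate, one must justify that the minimizing subarc switches only \emph{finitely many} times between free segments and boundary sub‑arcs, so that ``piecewise $C^2$'' genuinely means finitely many $C^2$ pieces; a switching can occur only where $\gamma$ is tangent to $\partial\mathscr{I}$ from inside, and the argument that such tangency points cannot accumulate along the compact arc $\gamma([0,1])\subset\bar{\mathscr{I}}$, using the $C^2$ regularity of $\partial\mathscr{I}\cap\Omega$ and minimality, is the technically heaviest part.
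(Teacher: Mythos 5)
Your proposal follows essentially the same route as the paper: existence by the direct method on the compact space $(\bar{\mathcal{G}},d)$ with lower semicontinuity of the weighted length (the paper realizes this via the polygonal approximants $\mathscr{F}_n$ and $\mathscr{F}=\sup_n\mathscr{F}_n$, which is the same lower-semicontinuity fact you invoke), and regularity by locally identifying each arc as a length minimizer in $\bar{\mathscr{I}}$, hence a concatenation of segments and sub-arcs of the $C^2$ curve $\partial\mathscr{I}\cap\Omega$. The one place where you are substantially lighter than the paper is the behaviour at branching points: for a limit network in $\bar{\mathcal{G}}$ an arc $\gamma$ may pass \emph{through} a branching point at an interior parameter $t\in(0,1)$ (branching points may coincide, or sit on other arcs), and there the local segment-replacement comparison is not available because straightening $\gamma$ would move the triple junction; the paper devotes its longest step to this case, building an explicit competitor $\tilde{\mathscr{G}}$ that relocates all incident radii from $x=\gamma(t)$ to the midpoint $x_m$ of the short boundary arc $\beta\subset\partial B_r(x)$ and using the non-crossing property to keep $\tilde{\mathscr{G}}$ admissible, thereby excluding a corner of $\gamma'$ at such a $t$. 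Your remark that ``near a triple point the comparison applies to each incident arc up to its endpoint'' does not cover this configuration, although for the literal claim of piecewise $C^2$ regularity a corner at one of the finitely many branching points would not by itself be fatal. Finally, the finiteness of the switching between free segments and boundary sub-arcs, which you rightly flag as the delicate point, is also not addressed explicitly in the paper's argument, so on that score your proposal is no less complete than the original.
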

  \begin{proof}
 1. The functional $\bar{\mathcal{G}}\ni\mathscr{G}\rightarrow\mathscr{F}(\mathscr{G})\in\R$ is
   lower semicontinuous with respect to the distance \eqref{metric}.

   The argument is standard: for each
    $\mathscr{G}\in\bar{\mathcal{G}}$, for each $\gamma\in\mathscr{G}$ and for each $n\in\N$, we set
  \begin{equation}
  \begin{split}
 & l_n(\gamma)=\sum_{k=1}^n\vert\gamma(\frac{k-1}{n})-\gamma(\frac{k}{n})\vert,\\
 & \mathscr{F}_n(\mathscr{G})=\sum_{\gamma\in\mathscr{G}}\sigma_{a_\gamma a_\gamma^\prime} l_n(\gamma).
 \end{split}
  \label{elle-n}
  \end{equation}
  Since the map $\bar{\mathcal{G}}\ni\mathscr{G}\rightarrow\mathscr{F}_n(\mathscr{G})\in\R$
   is continuous for each $n\in\N$ the semicontinuity follows from
  \[ \mathscr{F}(\mathscr{G})=\sup_n \mathscr{F}_n(\mathscr{G}).\]

  2. Set $\lambda^*=\inf_{\mathscr{G}\in\bar{\mathcal{G}}}\mathscr{F}(\mathscr{G})$ and let
  $\{\mathscr{G}_j\}_{j=1}^\infty\subset\bar{\mathcal{G}}$ be a minimizing sequence:
  \begin{equation}
  \lim_{j\rightarrow+\infty}\mathscr{F}(\mathscr{G}_j)=\lambda^*.
  \label{min-seq}
  \end{equation}
  Since $\bar{\mathcal{G}}$ is compact there exist $\hat{\mathscr{G}}\in\bar{\mathcal{G}}$ and a subsequence still
  denoted $\{\mathscr{G}_j\}_{j=1}^\infty$ such that
  \[\hat{\mathscr{G}}=\lim_{j\rightarrow+\infty}\mathscr{G}_j.\]
  This \eqref{min-seq} and the semicontinuity in Step 1. imply
  \[\mathscr{F}(\hat{\mathscr{G}})\leq\lim_{j\rightarrow+\infty}\mathscr{F}(\mathscr{G}_j)=\lambda^*.\]
This proves the first statement of the Proposition. The minimizer $\hat{\mathscr{G}}\in\bar{\mathcal{G}}$ may be singular. See Figure \ref{sing} for a simple example.

\begin{figure}
  \begin{center}
 \begin{tikzpicture}[scale=.7]
\draw[fill=gray] (0,0) circle [radius=3];;

\draw[fill=lightgray!75!white!25!] (1.5,.3) circle [radius=.3];;
\path[fill=lightgray!75!white!25!] (0,3)  arc [radius=3, start angle=90, end angle= 180]
 arc [radius=3, start angle=-90, end angle= 0] to (0,3);
\path[fill=lightgray] (.7765,2.898) -- (0,0)--(.5,0) arc [radius=1, start angle=180, end angle= -90] to (-.5,-1)
 arc [radius=.5, start angle=270, end angle=180] arc [radius=.5, start angle=0, end angle=140] to (-2.598,-1.5)
 arc [radius=3, start angle=-150, end angle=75]
  to (.7765,2.898);
  \path[fill=lightgray]  (.4831,.025) arc [radius=.13, start angle=-60.76, end angle=0] to (.8,1.7)-- (.3,1)--(.4831,.025);
  \draw [red, line width=1] (0,3)-- (0,.15)
 arc [radius=.15, start angle=180, end angle=270] to (1.5,0) arc [radius=.3, start angle=-90, end angle=120]
  to (.631,.0733)  arc [radius=.5943 , start angle=299.24, end angle=270] to (-3,0);
\draw [black, line width=1] (.3916,2.974)-- (-.193,.052) arc [radius=.2, start angle=165, end angle=270] to (.5,-.2);
\draw [black, line width=1]  (.5,-.2) arc [radius=.2, start angle=-90, end angle=0] arc [radius=.8, start angle=180, end angle=-90] to (-.5,-.8);
\draw [black, line width=1] (-.5,-.8) arc [radius=.3, start angle=270, end angle=180] arc [radius=.7, start angle=0, end angle=140] to (-2.719,-1.268);
\node[] at (-.8,.5) {$\mathscr{I}$};
\node[] at (-1.7,1.55) {$a_1$};
\node[] at (1.6,.35) {$a_1$};
\node[] at (1,-1.8) {$a_2$};
\draw[->] (-1,2.3)--(-.05,2.3);
\draw[->] (1,2.3)--(.25,2.3);
\node[left] at (-1,2.3) {$\hat{\mathscr{G}}$};
\node[right] at (1,2.3) {$\mathscr{G}$};
 \end{tikzpicture}
  \end{center}
\caption{The picture refers to the case $N=\tilde{N}=2$ where $n_b=0$ and $n_s=1$. The black line $\mathscr{G}$ denotes one of the infinite line in $\mathscr{I}$ that separate $\Omega_{a_1}$ and $\Omega_{a_2}$. The red line denotes $\hat{\mathscr{G}}$, the shortest such line in $\bar{\mathscr{I}}$. Note that $\Omega_{a_1}$ not connected together with the particular structure of $\Omega_{a_2}$ force $\hat{\mathscr{G}}$ to describe twice the same segment. This is one of the singularities that, a priori, can be expected for the minimizing network  $\hat{\mathscr{G}}$.}
\label{sing}
\end{figure}
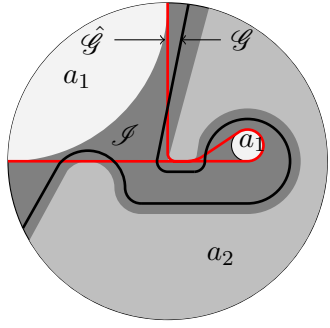

  3. Let $B_r(x)\subset\mathscr{I}\cap\Omega$ be a ball which has positive distance from the set of
  branching points of $\hat{\mathscr{G}}$. Then $B_r\cap\hat{\mathscr{G}}$ is the union of a family of simple arcs
  with extremes on $\partial B_r$. We know from  (iv) above that the arcs in $\hat{\mathscr{G}}$
  can be tangent but cannot cross. Therefore, if $\gamma_i([t,t^\prime])$
   and $\gamma_j([\tau,\tau^\prime])$ are
  two of these arcs we have that the segments $\mathrm{sg}(\gamma_i(t),\gamma_i(t^\prime))$ and
   $\mathrm{sg}(\gamma_j(\tau),\gamma_j(\tau^\prime))$ either coincide or have empty intersection.
  It follows that we can consider the network $\hat{\mathscr{G}}^\prime$ obtained from $\hat{\mathscr{G}}$
   by replacing each arc in $B_r$
   with the segment determined by its extremes. This implies that each arc in  $B_r$ actually coincides with
   the segment determined
    by its extremes. Indeed otherwise
    $\mathscr{F}(\hat{\mathscr{G}}^\prime)< \mathscr{F}(\hat{\mathscr{G}})$ in contradiction with the minimality of
    $\mathscr{F}(\hat{\mathscr{G}})$.
    In particular if $x\in\hat{\mathscr{G}}$, by reducing $r>0$ if necessary, we can assume that $B_r(x)\cap\hat{\mathscr{G}}$ coincides with a diameter of $B_r(x)$ and that, for each $\gamma\in\hat{\mathscr{G}}$, $\gamma\cap B_r(x)$ is either empty or coincides with that diameter.

4. If $x\in\partial\mathscr{I}\cap\Omega$ and, as before, $B_r(x)$ has positive distance from the set of branching points of $\hat{\mathscr{G}}$, then, arguing as in Step.3, we conclude that $B_r(x)\cap\hat{\mathscr{G}}$ is the union
  of a family of simple arcs contained in $B_r(x)\cap\bar{\mathscr{I}}$ and that, if $\gamma((t,t^\prime))=\gamma([0,1])\cap B_r(x)\cap\bar{\mathscr{I}}$, $\gamma\in\hat{\mathscr{G}}$, is one of these arcs,
   then $\gamma((t,t^\prime))$ coincides with the arc of minimal length which is contained in $B_r(x)\cap\bar{\mathscr{I}}$
    and has the extremes in
    $\gamma(t)$ and $\gamma(t^\prime)$.

    Since, as we have observed, $\partial\mathscr{I}\cap\Omega$ is a $C^2$ curve, the minimality of $\gamma((t,t^\prime))$ implies that, in the interval $(t,t^\prime)$, $\gamma$ is at least of class $C^1$.

    If $x=\gamma(t)$ for some $\gamma\in\hat{\mathscr{G}}$ and $t\in(0,1)$, then $\gamma$ can not intersect $\partial\mathscr{I}$ at $x$ and therefore, by taking $r>0$ sufficiently small we can ensure that the diameter of $B_r(x)$ parallel to $\gamma^\prime(t)$ is contained in $\bar{\mathscr{I}}$. Moreover if $\ddot{\gamma}(t)\neq 0$, where $\dot{}$ denotes derivative with respect to arc length, then $\ddot{\gamma}(t)$ points outside $\mathscr{I}$ and coincides with the curvature vector of $\partial\mathscr{I}$ at $x$.

 5. From Step.3 and Step.4 it follows that $\gamma\in\hat{\mathscr{G}}$ is piece wise $C^2$
  unless there is $t\in(0,1)$ such that $\gamma(t)$ coincides with a branching point and $\gamma^\prime$
   is discontinuous at $t$.

   \begin{figure}
  \begin{center}
 \begin{tikzpicture}[scale=.7]
\draw[fill=lightgray] (0,0) circle [radius=3];;
\path[fill=black] (0,0) circle [radius=.035];;
\draw[] (-1.5,-2.598)--(0,0)--(2.589,-1.5);
\draw[] (0,-.15)--(-.7764,-2.897);
\draw[] (0,-.15)--(0,-3);
\draw[] (0,-.15)--(2.1213,-2.1213);
\draw[] (.15,.246)--(-1.5,2.598);
\draw[]  (.15,.246)--(2.1213,2.1213);
\draw[]  (.15,.246)--(3,0);
\draw[blue,<->, line width=1.5] (-1.5,-2.598) arc [radius=3, start angle=240, end angle=330];
\draw[red] (.549,-2.049)--(-1.5,-2.598);
\draw[red] (.549,-2.049)--(2.598,-1.5);
\draw[red] (.549,-2.049)--(-.7764,-2.897);
\draw[red] (.549,-2.049)--(0,-3);
\draw[red] (.549,-2.049)--(2.1213,-2.1213);
\path[fill=black] (.549,-2.049) circle [radius=.035];;
\node[] at (-.13,.1) {$x$};
\node[] at (-1.8,.35) {$B_r(x)$};
\node[] at (1,-3.15) {$\beta$};
\node[] at (1.6,-.4) {$\gamma$};
\node[] at (.549,-1.849) {$x_m$};
\node[] at (-1.1,-1.15) {$\tilde{\mathscr{G}}$};
\draw[->] (1.5,-.45)--(1.3,-.7);
\draw[] (-1.1,-1.45)--(-1.1,-1.7);
\draw[->] (-1.1,-2)--(-1.1,-2.5);
\node[left] at (-1.5,-2.598) {$q_0$};
\node[right] at (2.548,-1.5) {$q_1$};
\node[] at (-.8764,-3.2) {$q^1$};
\node[] at (0,-3.3) {$q^2$};
\node[] at (2.2213,-2.3213) {$q^3$};
 \end{tikzpicture}
  \end{center}
\caption{The construction of $\tilde{\mathscr{G}}$.}
\label{bra}
\end{figure}

   We first consider the case where $x=\gamma(t)$ belongs to $\mathscr{I}\cap\Omega$. In this case we can choose $r>0$ in such a way that $B_r(x)\subset\mathscr{I}\cap\Omega$ has positive distance from the set of the branching points of $\hat{\mathscr{G}}$ which do not coincide with $x$. This and Step.3 imply that, for $r>0$ sufficiently small,
   $B_r(x)\cap\hat{\mathscr{G}}$ is the union of a finite family of radii of $B_r(x)$.
 In particular there are $0<t_0<t<t_1<1$
     such that
     \[\mathrm{sg}[x,\gamma(t_0))\cup\mathrm{sg}[x,\gamma(t_1))=\gamma((t_0,t_1)).\]
   The extremes $q_0=\gamma(t_0)$ and $q_1=\gamma(t_1)$ divide $\partial B_r(x)$ in two closed arcs
    $\beta$ and $\beta_1$ with $\beta$ the shortest one (we do not exclude the case where $\beta$ reduces to a point).
   Note that there may be several arcs through $x$ with properties analogous to $\gamma((t_0,t_1))$ but we can assume that
   the arc $\beta$ is minimal in the sense that, if $\tilde{\beta}$ is the arc associated to some other
    $\tilde{\gamma}\in\hat{\mathscr{G}}$ through $x$ analogous to $\gamma$, then
    \[\beta\subset\tilde{\beta}.\]
    This follows from the fact that, as we have seen, $\gamma_i, \gamma_j\in\hat{\mathscr{G}}$
     can be tangent but cannot cross. For the same reason the extremes $q^i\in \partial B_r(x)$, $i=1,2,3$ of
     the arcs (radii) that originate in a branching point that coincides with $x$ must all three together
     belong to $\beta$ or to $\beta_1$.

     Let $R$ be the set of the subarcs $\gamma((\tau_1,\tau_2))$, $\gamma\in\hat{\mathscr{G}}$, $(\tau_1,\tau_2)\subset(0,1)$ which
     satisfy
    \[\gamma([\tau_1,\tau_2])=\mathrm{sg}[x,q],\;\;\text{for some}\;q\in\beta,\]
    and $\gamma((\tau_1,\tau_2))$ is either a subarc of one of the three branches of a triple junction with
    branching point in $x$ and $q^i\in\beta$, $i=1,2,3$ or is part of a subarc with extremes in $\{q_0,q_1\}$.

    We are now in the position of showing that, in contrast with the minimality of $\hat{\mathscr{G}}$,
     the assumption of the discontinuity of $\gamma^\prime$ at $t\in(0,1)$ implies the existence of a
     network $\tilde{\mathscr{G}}\in\bar{\mathcal{G}}$ that satisfies
     \begin{equation}
     \mathscr{F}(\tilde{\mathscr{G}})<\mathscr{F}(\hat{\mathscr{G}}).
     \label{Fg<fg}
     \end{equation}
     To define $\tilde{\mathscr{G}}$ we only change $\hat{\mathscr{G}}\cap\bar{B}_r(x)$ by replacing each
     $\gamma([\tau_1,\tau_2])=\mathrm{sg}[x,q]$ in $R$ with the arc $\tilde{\gamma}([\tau_1,\tau_2])=\mathrm{sg}[x_m,q]$
     where $x_m=\frac{1}{2}(q_0+q_1)$. From the definition of the set $R$ and from the fact
      that $\gamma_i, \gamma_j\in\hat{\mathscr{G}}$ satisfy the non crossing condition it follows that the same is true
      for $\tilde{\gamma}_i, \tilde{\gamma}_j\in\tilde{\mathscr{G}}$. Therefore the transformation
       $\gamma\rightarrow\tilde{\gamma}$ defines an admissible  network $\tilde{\mathscr{G}}\in\bar{\mathcal{G}}$
       and  \eqref{Fg<fg} follows from $\vert q-x_m\vert<\vert q-x\vert$ for $q\in\beta$. The construction of $\tilde{\mathscr{G}}$ is illustrated in Figure \ref{bra} where the branching points of $\hat{\mathscr{G}}$ that coincide with $x$ are slightly displaced from $x$.

       The discussion of the case where the branching point $x=\gamma(t)$ belongs to $\partial\mathscr{I}\cap\Omega$ is similar. In this case some of the radii may be replaced by arcs of minimal length with extremes $x$ and a point $q\in\bar{\mathscr{I}}\cap \partial B_r(x)$. The point $x_m$ for the construction of $\tilde{\mathscr{G}}$ can be defined as before provided $r>0$ is taken sufficiently small. The proof is complete.
\end{proof}
\begin{corollary}
\label{cor}
Let $\bar{\mathcal{G}_f}$ the closure of  $\mathcal{G}_f$, the set of networks defined at the end of the Introduction. Then there exists an optimal network $\mathscr{G}_f\in\bar{\mathcal{G}_f}$ such that
\begin{equation}
\mathscr{F}(\mathscr{G}_f)=\inf_{\mathscr{G}\in \mathcal{G}_f}\mathscr{F}(\mathscr{G}).
\label{inF}
\end{equation}
\end{corollary}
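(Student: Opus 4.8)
The plan is to repeat, almost verbatim, the compactness and lower–semicontinuity argument of Proposition \ref{opt-G} with the larger class $\mathcal{G}_f$ in place of $\mathcal{G}$, and then to bridge the gap between $\inf_{\bar{\mathcal{G}_f}}\mathscr{F}$ and $\inf_{\mathcal{G}_f}\mathscr{F}$ by a recovery (density) construction. First I would put the distance \eqref{metric} on $\mathcal{G}_f$. Since $\mathcal{G}_f\supset\mathcal{G}\neq\emptyset$ and every $\mathscr{G}\in\mathcal{G}_f$ obeys \eqref{B-length}, each of its $n_s$ arcs admits a parametrization on $[0,1]$ that is Lipschitz with constant $K$, so the very argument used for $(\bar{\mathcal{G}},d)$ shows that $(\bar{\mathcal{G}_f},d)$ is compact. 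A limit network $\mathscr{G}\in\bar{\mathcal{G}_f}$ inherits the relaxed conditions (i)$_f$ and (vi)$_f$ together with (ii)--(v) of Theorem \ref{network}, in the limiting (possibly degenerate) form in which the elements of $\bar{\mathcal{G}}$ satisfy the properties listed just before Proposition \ref{opt-G}, the only difference being that its arcs need not lie in $\bar{\mathscr{I}}$; in particular each non-degenerate arc $\gamma$ of $\mathscr{G}$ still carries an unordered pair $\{a_\gamma,a_\gamma^\prime\}$ through the circuit relations \eqref{index-}--\eqref{gamma-in-}, so $\mathscr{F}$ extends to $\bar{\mathcal{G}_f}$ by the formula $\mathscr{F}(\mathscr{G})=\sum_{\gamma\in\mathscr{G}}\sigma_{a_\gamma a_\gamma^\prime}\vert\gamma\vert$. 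Lower semicontinuity of $\mathscr{F}$ on $(\bar{\mathcal{G}_f},d)$ then follows exactly as in the first step of the proof of Proposition \ref{opt-G}, from $\mathscr{F}=\sup_n\mathscr{F}_n$ with each $\mathscr{F}_n$ defined as in \eqref{elle-n} and continuous for $d$.

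Next I would set $\lambda_f^*=\inf_{\mathscr{G}\in\mathcal{G}_f}\mathscr{F}(\mathscr{G})$, finite because $\mathcal{G}\neq\emptyset$, pick a minimizing sequence $\{\mathscr{G}_j\}\subset\mathcal{G}_f$, and extract by compactness a subsequence converging to some $\mathscr{G}_f\in\bar{\mathcal{G}_f}$; lower semicontinuity gives $\mathscr{F}(\mathscr{G}_f)\leq\lambda_f^*$. For the opposite inequality it suffices to prove $\inf_{\bar{\mathcal{G}_f}}\mathscr{F}=\lambda_f^*$, that is, that every $\mathscr{G}\in\bar{\mathcal{G}_f}$ admits a recovery sequence $\mathscr{G}_k\in\mathcal{G}_f$ with $\mathscr{G}_k\to\mathscr{G}$ and $\mathscr{F}(\mathscr{G}_k)\to\mathscr{F}(\mathscr{G})$: this yields $\lambda_f^*\leq\mathscr{F}(\mathscr{G}_k)\to\mathscr{F}(\mathscr{G})$ for all such $\mathscr{G}$, hence $\lambda_f^*\leq\inf_{\bar{\mathcal{G}_f}}\mathscr{F}\leq\mathscr{F}(\mathscr{G}_f)\leq\lambda_f^*$ and all four quantities coincide. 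I would build the recovery sequence by small surgeries on $\mathscr{G}$: replace every arc of $\mathscr{G}$ that has collapsed to a point by a short arc of length $\eps_k\to 0$ pushed slightly into the region lying between the two sets $S_a$ it is meant to bound (this also pulls apart branching points of $\mathscr{G}$ that have collided and, for an end-arc, detaches the corresponding triple point from $\partial\Omega$), and pull apart by $\eps_k$ any two tangent arcs or any self-tangent arc. Each surgery alters every length by $O(\eps_k)$, hence $\mathscr{F}$ by $O(\eps_k)$, and for $\eps_k$ small the resulting network lies in $\mathcal{G}_f$, with $N$ pairwise disjoint simply connected relatively open regions $S_a$ and $\Gamma_a\subset\partial S_a\cap\partial\Omega$ for $a\in\tilde{A}$.

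The compactness and the lower semicontinuity are straightforward transcriptions of Proposition \ref{opt-G}; the main obstacle is the recovery step, where one has to make sure that the surgeries above can always be performed so that the complementary regions remain simply connected, pairwise disjoint and correctly attached to $\partial\Omega$ --- in other words that $\mathcal{G}_f$ is dense in itself at cost $O(\eps_k)$. The care lies in the combinatorial bookkeeping when several arcs and branching points degenerate simultaneously at one point, and in selecting, for each collapsed arc, the side into which it must be re-opened so as to restore the correct adjacency pattern between the $S_a$. Once this is in place, $\inf_{\bar{\mathcal{G}_f}}\mathscr{F}=\inf_{\mathcal{G}_f}\mathscr{F}$ and the corollary follows, with $\mathscr{G}_f$ the desired optimal network; moreover $\mathscr{G}_f$ is piecewise $C^2$ by the same reasoning as in steps 3--5 of the proof of Proposition \ref{opt-G}.
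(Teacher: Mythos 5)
Your proposal is correct and follows essentially the same route as the paper: the paper's proof of Corollary \ref{cor} consists precisely of observing that $(\bar{\mathcal{G}_f},d)$ is compact and that the direct-method argument of Proposition \ref{opt-G} (Lipschitz reparametrization, Arzel\`a--Ascoli compactness, lower semicontinuity of $\mathscr{F}$ via $\mathscr{F}=\sup_n\mathscr{F}_n$) carries over verbatim with the constraint $\mathscr{G}\subset\bar{\mathscr{I}}$ dropped. The only difference is your additional recovery-sequence step identifying $\inf_{\mathcal{G}_f}\mathscr{F}$ with $\inf_{\bar{\mathcal{G}_f}}\mathscr{F}$; the paper passes over this point silently (as it does in Proposition \ref{opt-G}, where the minimum is simply taken over the closure), and your surgery construction, though only sketched, is a reasonable way to justify the equality as literally stated in \eqref{inF}.
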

\begin{proof}
Similarly to $\bar{\mathcal{G}}$, also the closure $\bar{\mathcal{G}_f}$ of $\mathcal{G}_f$ with the distance \eqref{metric} is a compact space. Then, the same arguments developed in the proof of Proposition \ref{opt-G} show the existence of an optimal \emph{free} network $\mathscr{G}_f$ which is not subject to the constraint of being a subset of $\bar{\mathscr{I}}$.
\end{proof}

The determination of $\mathscr{G}_f$ is a purely geometric problem and we can expect that, generically,  $\mathscr{G}_f$ is a known object and there are constants
 $d_0>0$ and $c_0>0$ such that, for $d\in(0,d_0]$ and for each $\mathscr{G}^\prime\in\bar{\mathcal{G}}_f$ with the same end points as $\mathscr{G}_f$ it results
 \begin{equation}
 d(\mathscr{G}^\prime,\mathscr{G}_f)\geq d\quad\Rightarrow\quad  \mathscr{F}(\mathscr{G}^\prime)-\mathscr{F}(\mathscr{G}_f)\geq c_0d^2.
 \label{GIn}
 \end{equation}
 We say that $\mathscr{G}_f$ is nondegenerate if  $\mathscr{G}_f$ satisfies \eqref{GIn} for some positive constants $d_0$ and $c_0$.

 \section{A theorem on the fine structure of $u^\epsilon$.}
 \label{Fine}
 In this Section we assume $h_1$, $h_2$ and
 \begin{description}
 \item[$h_3$] The phases $\Omega_a$, $a\in A$ are connected.
 \item[$h_4$] For each pair $a_i,a_j\in A$, $a_i\neq a_j$, there is a smooth map $u_{a_ia_j}$ that connects $a_i$ to $a_j$, $\sigma_{a_ia_j}=\int_\R\vert u^\prime_{a_ia_j}\vert^2 dt$ and the inequality
\begin{equation}
\sigma_{a_ia_j}+\sigma_{a_ia_k}>\sigma_{a_j a_k},\;\;a_i,a_j,a_k\in A,
\label{triangle}
\end{equation} holds.

\end{description}
\noindent
We prove that, under assumption $h_3$, we can give a quite precise description of the fine structure of $u^\epsilon$ for small $\epsilon>0$.

We expect that, at least in some sense, the connectivity of the phases assumed in $h_3$ holds true but we are not able to prove it. To our knowledge  $h_3$ has not yet be established or disproved.

 \begin{theorem}
\label{fine}
Assume $n=2$ and $h_1-h_4$. Assume that $\delta=C\epsilon^\frac{1}{6}$ is a regular value and that the boundary datum $v_0^\epsilon$ is such that
 \begin{equation}
 \vert I_a\vert\leq C\epsilon^\frac{1}{3},\;\;a\in\tilde{A}.
 \label{h}
 \end{equation}
 Let $u^\epsilon$ be a minimizer of \eqref{min} and let $\mathscr{H}_f\subset\bar{\mathcal{G}}_f$ be the set of the free minimizers of $\mathscr{F}(\mathscr{G})$ given by Corollary \ref{cor}. Assume that each $\mathscr{G}\in\mathscr{H}_f$ is nondegenerate and that the sets $S_a$, $a\in A$ determined by  $\mathscr{G}$ via $\bar{\Omega}\setminus \mathscr{G}=\cup_a S_a$ are connected.

Then there exist $\mathscr{G}_f\in\mathscr{H}_f$ and positive constants $K,k$ and $C$ such that, for small $\epsilon>0$
\begin{equation}
\vert u^\epsilon(x)-a\vert\leq Ke^{-\frac{k}{\epsilon}(d(x,\mathscr{G}_f)-C\epsilon^\frac{1}{6})^+},\;\;x\in S_{a,f},\;a\in A,
\label{Stru}
\end{equation}
where the sets $S_{a,f}$, $a\in A$ are defined by.
\[\bar{\Omega}\setminus\mathscr{G}_f=\cup_{a\in A}S_{a,f}.\]
\end{theorem}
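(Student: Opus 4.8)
The plan is to establish a sharp two‑sided bound for $J_\Omega^\epsilon(u^\epsilon)$, use nondegeneracy to pin the optimal diffuse‑interface network near a free minimizer, localize $\mathscr{I}^{\epsilon,\delta}$ inside a thin tube of $\mathscr{G}_f$, and then deduce \eqref{Stru} by a barrier argument. For the upper bound I would build a competitor $\bar u^\epsilon\in\mathscr{A}$ adapted to a free minimizer $\mathscr{G}_f\in\mathscr{H}_f$: set $\bar u^\epsilon\equiv a$ on the part of $S_{a,f}$ at distance $\ge L\epsilon|\log\epsilon|$ from $\mathscr{G}_f$; across each arc $\gamma\subset\mathscr{G}_f$, with adjacent phases $a_\gamma\neq a_\gamma'$ as in \eqref{gamma-in}, insert the rescaled one–dimensional optimal profile $u_{a_\gamma a_\gamma'}(\cdot/\epsilon)$ of $h_4$ (so that $\sigma_{a_\gamma a_\gamma'}=\int_\R|u_{a_\gamma a_\gamma'}'|^2$); interpolate on $O(\epsilon|\log\epsilon|)$–balls at the finitely many triple junctions; and near each $I_a$ insert a boundary layer of width $O(\epsilon)$ matching $v_0^\epsilon$, whose cost is $O(|I_a|)=O(\epsilon^{1/3})$ by $h_2$ and \eqref{h}. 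A standard computation gives $J_\Omega^\epsilon(\bar u^\epsilon)\le\mathscr{F}(\mathscr{G}_f)+C\epsilon^{1/3}$ (curvature of the arcs and the junction interpolations contribute only $O(\epsilon|\log\epsilon|^2)$), hence $J_\Omega^\epsilon(u^\epsilon)\le\mathscr{F}(\mathscr{G}_f)+C\epsilon^{1/3}$.

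\textbf{Lower bound and identification of the network.} By Theorem \ref{network}, together with $h_3$ (so $N'=N$, all phases connected and all the $S_a$ simply connected) and $h_4$, the diffuse interface of $u^\epsilon$ carries the optimal, non‑singular network $\hat{\mathscr{G}}=\hat{\mathscr{G}}^{\epsilon,\delta}\in\bar{\mathcal{G}}$ of Proposition \ref{opt-G}. For each arc $\gamma\subset\hat{\mathscr{G}}$ the two sides carry $\Omega_{a_\gamma}^{\epsilon,\delta}$ and $\Omega_{a_\gamma'}^{\epsilon,\delta}$; slicing a thin (pairwise disjoint) tube around $\gamma$ by transversals and using $\int\big(\tfrac\epsilon2|\partial_\nu u^\epsilon|^2+\tfrac1\epsilon W(u^\epsilon)\big)\,d\nu\ge\sqrt2\,\mathrm{dist}_{\sqrt W}\big(\{|z-a_\gamma|\le\delta\},\{|z-a_\gamma'|\le\delta\}\big)\ge\sigma_{a_\gamma a_\gamma'}-C\delta^2$ — the last inequality because $\sqrt W\sim|z-a|$ near each well and $h_4$ makes $\sigma_{a_\gamma a_\gamma'}$ the $\sqrt W$–geodesic distance between the two wells — and then summing over the $n_s$ arcs and discarding the $O(\epsilon|\log\epsilon|)$–balls at the $n_b$ triple points, I obtain $J_\Omega^\epsilon(u^\epsilon)\ge\mathscr{F}(\hat{\mathscr{G}})-C\epsilon^{1/3}$. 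Since $\mathcal{G}\subset\mathcal{G}_f$ gives $\mathscr{F}(\mathscr{G}_f)=\inf_{\bar{\mathcal{G}}_f}\mathscr{F}\le\mathscr{F}(\hat{\mathscr{G}})$, the two bounds yield $0\le\mathscr{F}(\hat{\mathscr{G}})-\mathscr{F}(\mathscr{G}_f)\le C\epsilon^{1/3}$. Because $\hat{\mathscr{G}}\in\bar{\mathcal{G}}_f$ and, after moving its end points inside the $I_a$ (at a cost $O(\epsilon^{1/3})$, by \eqref{h}) so they agree with those of a minimizer, every element of $\mathscr{H}_f$ is nondegenerate, \eqref{GIn} upgrades this to $d(\hat{\mathscr{G}},\mathscr{G}_f)\le C\epsilon^{1/6}$ for some $\mathscr{G}_f\in\mathscr{H}_f$ (passing to a subsequence to fix the combinatorial type; the final bound is uniform). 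This is exactly where the tuning $\delta=C\epsilon^{1/6}$, $|I_a|\le C\epsilon^{1/3}$ enters: $(\text{energy gap})^{1/2}\sim\epsilon^{1/6}$.

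\textbf{Localization of $\mathscr{I}^{\epsilon,\delta}$ and exponential decay.} Next I would prove $\mathscr{I}^{\epsilon,\delta}\subset\{x\in\bar\Omega:d(x,\mathscr{G}_f)\le C\epsilon^{1/6}\}$. On $\partial\Omega$ one has $\mathscr{I}^{\epsilon,\delta}\cap\partial\Omega=\cup_{a\in\tilde A}I_a$, and each $I_a$ lies within $|I_a|\le C\epsilon^{1/3}$ of an end point of $\mathscr{G}_f$; a boundary‑layer barrier near each $\Gamma_a$ (built from $|v_0^\epsilon-a|\le\delta$ and $W_{zz}(a)>0$) controls the part of $\mathscr{I}^{\epsilon,\delta}$ in a $C\epsilon^{1/6}$–collar of $\partial\Omega$. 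For the interior part, any connected component $T$ of $\mathscr{I}^{\epsilon,\delta}$ lying outside the $C\epsilon^{1/6}$–tube of $\hat{\mathscr{G}}$ is adjacent, along $\partial T\cap\Omega$, to a single phase $a$ — otherwise $T$ would carry an extra arc of separating interface of length bounded below by a positive constant, forcing $\mathscr{F}(\hat{\mathscr{G}})>\mathscr{F}(\mathscr{G}_f)+c\gg\mathscr{F}(\mathscr{G}_f)+C\epsilon^{1/3}$, a contradiction — so the maximum principle for minimizers (Theorem 4.1 in \cite{afs}, as used in Lemma \ref{do-conn}) forces $|u^\epsilon-a|<\delta$ on $T$, i.e. $T=\emptyset$. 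With $d(\hat{\mathscr{G}},\mathscr{G}_f)\le C\epsilon^{1/6}$ this gives the claim. Finally, on $\Omega_a':=\{x\in S_{a,f}:d(x,\mathscr{G}_f)>C\epsilon^{1/6}\}$ we have $|u^\epsilon-a|\le\delta$ on $\partial\Omega_a'$ (from the localization on $\{d(\cdot,\mathscr{G}_f)=C\epsilon^{1/6}\}$ and from $|v_0^\epsilon-a|\le\delta$ on $\Gamma_a$). Since $W_{zz}(a)$ is positive definite there is $\lambda>0$ with $W_z(z)\cdot(z-a)\ge\lambda|z-a|^2$ for $|z-a|\le2\delta_0$, whence, by the Euler–Lagrange equation, $\phi:=|u^\epsilon-a|^2$ satisfies $\epsilon^2\Delta\phi\ge2\lambda\phi$ on $\Omega_a'$ (a bootstrap keeps $|u^\epsilon-a|\le2\delta_0$ there). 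Comparing $\phi$ with the barrier $4\delta^2\exp\!\big(-\tfrac k\epsilon(d(x,\partial\Omega_a')-C\epsilon)\big)$ — admissible for $k$ small, since $\partial\Omega_a'$, being the boundary of a tube of radius $C\epsilon^{1/6}$ around a piecewise‑$C^2$ network, has curvature $\ll\epsilon^{-1}$ — and using $d(x,\partial\Omega_a')\ge d(x,\mathscr{G}_f)-C\epsilon^{1/6}$, one obtains \eqref{Stru} after taking square roots and renaming the constants.

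\textbf{Main obstacle.} The crux is the localization step: one must rule out that any piece of $\mathscr{I}^{\epsilon,\delta}$ — either a thin ``tentacle'' or a far‑off branch of the separating network — escapes the $C\epsilon^{1/6}$–tube of $\mathscr{G}_f$, and this forces one to couple the quantitative $O(\epsilon^{1/3})$ energy gap, the nondegeneracy of all free minimizers, and a maximum‑principle (or clearing‑out/density) argument controlling simultaneously the transverse thickness and the reach of the diffuse interface, with the boundary collar handled separately. A secondary technical point is the careful bookkeeping, keeping every error term $O(\epsilon^{1/3})$, so that nondegeneracy returns exactly an $\epsilon^{1/6}$–neighbourhood; this is the reason for the specific exponents in the hypotheses.
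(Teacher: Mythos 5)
Your overall architecture (sharp upper bound via a profile-based competitor, lower bound over the constrained optimal network $\hat{\mathscr{G}}$, nondegeneracy \eqref{GIn} to force $d(\hat{\mathscr{G}},\mathscr{G}_f)\leq C\epsilon^{1/6}$, then elliptic decay) is the same as the paper's. But there is a genuine gap in your lower bound, and it is precisely the point the paper identifies as ``an essential obstruction'': you slice ``a thin (pairwise disjoint) tube around $\gamma$'' and sum the one-dimensional energies, but nothing guarantees that the normal fibers over distinct points of $\hat{\mathscr{G}}$ are disjoint. The optimal network $\hat{\mathscr{G}}\in\bar{\mathcal{G}}$ produced by Proposition \ref{opt-G} is only a limit of admissible networks: its arcs may be tangent to one another, may traverse the same segment twice (Figure \ref{sing}), and there is no a priori curvature bound, so fibers $F_x$ and $F_y$ over distinct arcs, or over distant points of the same arc, can overlap and the transverse energy would then be double-counted. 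Discarding $O(\epsilon|\log\epsilon|)$--balls at the $n_b$ triple points does not address this, because overlaps are not a priori confined to neighborhoods of the branching points. This is exactly what Lemma \ref{nearBra} establishes: using $h_3$ (so that $\Omega_a$ lies on one side of the chord $\mathrm{sg}[x,\xi]\cup\mathrm{sg}[\xi,y]$), the strict triangle inequality \eqref{triangle}, the area bound \eqref{|I|}, and competitor networks obtained by rerouting $\hat{\mathscr{G}}$ through the overlap region, one shows that $F_x\cap F_y\neq\emptyset$ forces $x$ to lie within $C\epsilon^{1/2-\alpha}$ of a branching point; only then does the fiber integration close with error $O(\epsilon^{1/3})$. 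In your write-up $h_3$ and \eqref{triangle} are never actually used at the point where they are indispensable.

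A secondary problem is your localization step. First, the claim $\mathscr{I}^{\epsilon,\delta}\subset\{d(\cdot,\mathscr{G}_f)\leq C\epsilon^{1/6}\}$ with $\delta=C\epsilon^{1/6}$ is stronger than what is needed or provable at that stage; the paper's Proposition \ref{<d} only shows $\vert u^\epsilon-a\vert\leq\delta_0$ (a fixed constant) outside the tube, which is all the linearization requires. Second, your mechanism for it does not work as stated: a connected component $T$ of $\mathscr{I}^{\epsilon,\delta}$ minus the tube has part of its boundary on the tube boundary, where you have no pointwise control of $u^\epsilon$, so the maximum principle of Theorem 4.1 in \cite{afs} cannot be applied to $T$; and the assertion that a far-off piece of interface adjacent to two phases must carry an extra arc of length bounded below by a constant is not justified (a thin tentacle need not contribute macroscopic length to $\hat{\mathscr{G}}$). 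The paper avoids both issues by a clearing-out argument: if $\min_a\vert u^\epsilon(x_0)-a\vert\geq\delta_0$ at a point $x_0$ with $d(x_0,\mathscr{G}_f)\geq C_0\epsilon^{1/6}$, the Caffarelli--Cordoba density estimate produces an amount $\geq c\,\epsilon r^2$ of energy in $B_{\epsilon r}(x_0)$ that is \emph{not} counted by the lower bound of Proposition \ref{LB3}, and choosing $\epsilon r\sim\epsilon^{2/3}$ contradicts the upper bound. You name this alternative in your ``main obstacle'' paragraph but the argument you actually give is the unworkable one. With these two steps repaired along the paper's lines, the rest of your proposal (end-point adjustment at cost $O(\epsilon^{1/3})$, nondegeneracy, and the exponential barrier) goes through.
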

We begin by  sketching a possible approach to the proof of Theorem \ref{fine}. We denote by $e_\epsilon$ a generic quantity which converges to $0$ as $\epsilon\rightarrow 0$.

 In Proposition \ref{opt-G} we have attached to the unknown diffuse interface $\mathscr{I}$ a geometrical object: the optimal network $\hat{\mathscr{G}}\subset\bar{\mathscr{I}}$. $\hat{\mathscr{G}}$ is a kind of \emph{spine}  of class $C^1$ which is intimately related to $\mathscr{I}$ and admits a normal unit vector $\nu_x$ for each $x\in\hat{\mathscr{G}}$ which is not a branching or an end point. Moreover $\mathscr{F}(\hat{\mathscr{G}})$ can be considered as a generalized  \emph{weighed length} of the diffuse interface $\mathscr{I}$ and we can expect a lower bound of the form
 \begin{equation}
 J_\Omega^\epsilon(u^\epsilon)\geq\mathscr{F}(\hat{\mathscr{G}})-e_\epsilon,
 \label{LB}
 \end{equation}
  for the energy of $J_\Omega^\epsilon(u^\epsilon)$ of a minimizer $u^\epsilon$ of problem \eqref{min}.
 This conjecture is based on the fact that, given $\beta\in(0,1)$, due to the smallness of $\vert\mathscr{I}\vert$, for most $x\in\hat{\mathscr{G}}$, the connected component $F_x$ of $\{y=x+s\nu_x,\,\vert s\vert\leq\epsilon^\beta\}\cap\bar{\mathscr{I}}$ that contains $x$ should have the extremes in two different phases $\Omega_a$ and $\Omega_{a^\prime}$.

While a direct study of the structure of $\mathscr{I}$ as dictated by the minimality of $u^\epsilon$ seems an almost impossible task, we can expect that, for small $\epsilon>0$, the shape of $\mathscr{I}$ should be approximately determined by the condition that $\mathscr{F}(\hat{\mathscr{G}})$ be minimum. This means minimizing $\mathscr{F}(\mathscr{G})$ without imposing the constraint $\mathscr{G}\subset\bar{\mathscr{I}}$ as we have discussed in Corollary \ref{cor}. In other words we conjecture that, for small $\epsilon>0$, $\mathscr{I}$ is a \emph{blurring} of the optimal free network $\mathscr{G}_f$ considered in  Corollary \ref{cor} and that we have an  upper bound of the form
 \begin{equation}
 J_\Omega^\epsilon(u^\epsilon)\leq\mathscr{F}(\mathscr{G}_f)+e_\epsilon.
 \label{UB}
 \end{equation}
 Indeed, if $S_{a,f}$, $a\in A$ are the sets defined by the decomposition $\bar{\Omega}\setminus\mathscr{G}_f=\cup_{a\in A}S_{a,f}$, then \eqref{UB} is established by constructing a suitable comparison map which is a perturbation of the step map $v^0=\sum_{a\in A}a\1_{S_{a,f}}$, see Lemma \ref{UBerr} where \eqref{UB} is proved with $e_\epsilon\leq C\epsilon\vert\ln{\epsilon}\vert^2$.

 The idea is then to show that $\hat{\mathscr{G}}$ is actually a small perturbation of $\mathscr{G}_f$.

\noindent  From \eqref{LB} and \eqref{UB} we have
 \begin{equation}
 \mathscr{F}(\hat{\mathscr{G}})-\mathscr{F}(\mathscr{G}_f)\leq e_\epsilon.
 \label{F-F}
 \end{equation}
  $\hat{\mathscr{G}}$ may not have the same end points as $\mathscr{G}_f$ and we can not directly conclude from
 \eqref{GIn} and \eqref{F-F} that
 \begin{equation}
 d(\hat{\mathscr{G}},\mathscr{G}_f)\leq e_\epsilon.
 \label{GnearG}
 \end{equation}
 This inequality follows from \eqref{GIn} and \eqref{F-F} after observing that, due to the smallness of the intervals $I_a$ introduced in \eqref{Bdry-int}, there exists a small deformation  $\hat{\mathscr{G}}^\prime$  of  $\hat{\mathscr{G}}$ that satisfies
 \begin{equation}
 d(\hat{\mathscr{G}}^\prime,\hat{\mathscr{G}})\leq e_\epsilon,\quad\text{and}\quad
\vert \mathscr{F}(\hat{\mathscr{G}}^\prime)-\mathscr{F}(\hat{\mathscr{G}})\vert\leq e_\epsilon,
\label{gprime-g}
\end{equation}
see Lemma \ref{gf-gf} where we establish this estimate with $e_\epsilon\leq C\epsilon\vert\ln{\epsilon}\vert$.
The estimate \eqref{GnearG} is the key step toward the understanding of the fine structure of minimizers $u^\epsilon$ of problem \eqref{min}. Indeed  \eqref{GnearG} determines, modulo a small error $e_\epsilon$, the shape and the location of the sets $\hat{S}_a$, $a\in A$, defined by \eqref{G0-sep} with $\mathscr{G}=\hat{\mathscr{G}}$. This implies the knowledge of the location of the phases $\Omega_a\subset\hat{S}_a$, $a\in A$, that is the knowledge of the regions where the minimizer is near to one or to another of the zeros of $W$ and is the basis for further analysis.
\vskip.3cm

Unfortunately the approach outlined above finds an essential obstruction in the proof of the lower bound \eqref{LB}. We can not a priori exclude pathological situation  like the one illustrated in Figure \ref{sing} and, in particular, we don't have control of the curvature of $\hat{\mathscr{G}}$ and the \emph{fibers} $F_x$ and $F_y$
associated to different point $x,y\in\hat{\mathscr{G}}$ may have nonempty intersection.

In the remaining of this Section we show that, in various nontrivial cases, this obstruction can be removed and the estimate \eqref{GnearG} rigorously established under assumption $h_3$.
\vskip.3cm

\begin{proposition}
\label{LB3}
Assume the same as in Theorem \ref{fine}, and $\delta=\epsilon^\alpha$, $\alpha\in(0,\frac{1}{2})$. If
$h_3$ holds, and \eqref{triangle} holds,
 then there exists $C_1>0$ such that
\begin{equation}
J_\Omega^\epsilon(u^\epsilon)\geq\mathscr{F}(\hat{\mathscr{G}})-C_1\epsilon^\frac{1}{3},
\label{Lbequ}
\end{equation}
where $\hat{\mathscr{G}}\subset\bar{\mathscr{I}}$ is the optimal network given by Proposition \ref{opt-G}.
\end{proposition}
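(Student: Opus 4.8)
The plan is to establish the lower bound by localizing the energy of $u^\epsilon$ around the optimal network $\hat{\mathscr{G}}$ and using a slicing/coarea argument combined with the one-dimensional structure of the transitions. First I would cover a large portion of $\hat{\mathscr{G}}$ (away from the $n_b$ branching points and the $\tilde N$ endpoints) by a family of disjoint normal segments $F_x=\{x+s\nu_x:|s|\le \epsilon^\beta\}$ for a suitable $\beta\in(\alpha,1)$; since $\hat{\mathscr{G}}$ is of class $C^1$ (piecewise $C^2$ by Proposition \ref{opt-G}) and has finite total length by \eqref{B-length}, the tubular neighbourhood map $(x,s)\mapsto x+s\nu_x$ is a diffeomorphism onto its image for $\epsilon$ small, off a set of arclength $e_\epsilon$ near the singular points. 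Along each such fiber $F_x$, the trace of $u^\epsilon$ must pass from a neighbourhood of $a_\gamma$ to a neighbourhood of $a_\gamma'$ — this is exactly the content of \eqref{gamma-in-}, i.e.\ $\gamma\subset\mathscr{C}_{a_\gamma}\cap\mathscr{C}_{a_\gamma'}$, so the two sides of $\hat{\mathscr{G}}$ lie in $S_{a_\gamma}$ and $S_{a_\gamma'}$, which under $h_3$ are the connected phases. Using $h_3$ here is what lets us assert the fiber actually connects $\Omega_{a_\gamma}$ to $\Omega_{a_\gamma'}$ rather than possibly re-entering the same phase: since the $S_a$ are connected and $\Omega_a\subset S_a$, and $\hat{\mathscr{G}}$ separates them, a normal fiber short enough to stay in the tube but long enough ($\epsilon^\beta\gg\delta=\epsilon^\alpha$) to exit the diffuse interface on both ends must hit both $\Omega_{a_\gamma}$ and $\Omega_{a_\gamma'}$ for a.e.\ $x$.

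Given that, the second step is the standard Modica–Mortola / Bogomol'nyi lower bound on each fiber. By $h_4$ the surface tension $\sigma_{a_\gamma a_\gamma'}=\int_\R|u'_{a_\gamma a_\gamma'}|^2\,dt$ is the optimal cost of a one-dimensional transition, and for any path $v$ on $F_x$ going from $B_\delta(a_\gamma)$ to $B_\delta(a_\gamma')$ one has
\begin{equation}
\int_{F_x}\Big(\epsilon\frac{|\partial_s v|^2}{2}+\frac{1}{\epsilon}W(v)\Big)\,ds
\ \ge\ \int_{F_x}\sqrt{2W(v)}\,|\partial_s v|\,ds\ \ge\ \sigma_{a_\gamma a_\gamma'}-C\delta,
\end{equation}
the last inequality because the geodesic distance in the degenerate metric $\sqrt{2W}$ between the two $\delta$-balls is $\sigma_{a_\gamma a_\gamma'}$ up to an $O(\delta)$ correction coming from the positive-definiteness of $W_{zz}(a)$ in $h_1$. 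Integrating this over $x\in\hat{\mathscr{G}}$ via the coarea formula, and using that the Jacobian of the tubular map is $1+O(\epsilon^\beta)$, gives
\begin{equation}
J_\Omega^\epsilon(u^\epsilon)\ \ge\ \sum_{\gamma\in\hat{\mathscr{G}}}\sigma_{a_\gamma a_\gamma'}|\gamma|\,(1-e_\epsilon)-C\,\delta\,|\hat{\mathscr{G}}|\ =\ \mathscr{F}(\hat{\mathscr{G}})-e_\epsilon,
\end{equation}
where the triangle inequality \eqref{triangle} enters to guarantee that it is never energetically cheaper for a fiber to make a detour through a third well $a_k$ (a transition $a_\gamma\to a_k\to a_\gamma'$ costs $\sigma_{a_\gamma a_k}+\sigma_{a_k a_\gamma'}>\sigma_{a_\gamma a_\gamma'}$), so the naive pairwise cost is genuinely the lower bound. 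Tracking the errors: the excluded neighbourhood of branching/end points contributes $e_\epsilon=O(\epsilon^\beta)$ to the length defect, the Jacobian distortion contributes $O(\epsilon^\beta)$, and the $\delta$-ball correction contributes $O(\delta)=O(\epsilon^\alpha)$; choosing $\beta=\tfrac13$ and recalling $\alpha\in(0,\tfrac12)$ — with the sharp bookkeeping one optimizes to $\epsilon^{1/3}$ — yields \eqref{Lbequ} with the stated exponent $C_1\epsilon^{1/3}$.

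The main obstacle, and the reason $h_3$ is invoked, is exactly the one flagged in the text before the statement: without connectivity of the phases (equivalently, without ruling out pathologies like Figure \ref{sing}) one cannot control the geometry of $\hat{\mathscr{G}}$ near its singular set, and two fibers $F_x$, $F_y$ attached to distinct points could overlap, causing the coarea argument to double-count or, worse, a fiber could fail to connect two distinct wells. Under $h_3$ one argues that each $S_a$ being connected and simply connected (by Theorem \ref{network}(i)) forces the separating network to be "taut" in the sense that, off an arclength-$e_\epsilon$ bad set, the normal fibers of length $\epsilon^{1/3}$ are pairwise disjoint and each joins the two adjacent connected phases; this is where the bulk of the technical work lies, and I would carry it out by a compactness/continuity argument on the $C^1$ arcs of $\hat{\mathscr{G}}$ together with the measure estimate \eqref{|I|} on $|\mathscr{I}^{\epsilon,\delta}|$ to show that the diffuse interface is so thin that the fibers cannot spiral or self-intersect. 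The remaining steps — the one-dimensional Bogomol'nyi inequality, the coarea formula, and the triangle-inequality exclusion of detours — are routine once the fibration is in place.
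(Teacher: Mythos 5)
Your overall architecture coincides with the paper's: fibrate a tube around $\hat{\mathscr{G}}$ by normal segments $F_x$, apply the one-dimensional Modica--Mortola bound on each fiber, and integrate over arclength. But the step you defer to the end --- showing that, outside a set of arclength $e_\epsilon$, the fibers attached to distinct points of $\hat{\mathscr{G}}$ are pairwise disjoint --- is precisely the crux of the proof, and the mechanism you propose for it does not work. You suggest that the smallness of $\vert\mathscr{I}^{\epsilon,\delta}\vert$ from \eqref{|I|} together with a compactness/continuity argument on the $C^1$ arcs forces the fibers not to overlap. That is false in general: as Figure \ref{sing} shows, $\hat{\mathscr{G}}$ can traverse the same thin sliver of $\bar{\mathscr{I}}$ twice (two arcs, or two portions of the same arc, running antiparallel at distance $o(\epsilon^\beta)$), and no bound on the \emph{area} of $\mathscr{I}$ prevents this --- indeed the thinner $\mathscr{I}$ is, the more likely two nearby strands are to have overlapping normal fibers. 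The paper's actual resolution (Lemma \ref{nearBra}) is not a soft continuity argument but a quantitative competitor construction: whenever $F_x\cap F_y\neq\emptyset$, one uses $h_3$ to identify a region $E\subset\bar{\mathscr{I}}$ cut off from the phase $\Omega_a$, builds a modified network $\tilde{\mathscr{G}}\in\bar{\mathcal{G}}$ by replacing boundary arcs of $E$ with segments, and exploits the minimality of $\hat{\mathscr{G}}$ together with the strict triangle inequality \eqref{triangle} (via an induction along a tree of maximal inscribed balls, each of radius $\lesssim\epsilon^{1/2-\alpha}$ by \eqref{|I|}) to conclude that $x$ lies within arclength $C\epsilon^{1/2-\alpha}$ of a branching point. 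This gives the bound $\vert g^*\vert\leq C\epsilon^{1/2-\alpha}$ on the overlap set, which is what makes the final bookkeeping close. Nothing in your sketch produces a quantitative bound of this kind, and \eqref{triangle} is needed here --- not merely, as you use it, to exclude detours through a third well in the one-dimensional transition.

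There is also a quantitative slip in your error accounting: you state the fiberwise lower bound as $\sigma_{a_\gamma a_\gamma'}-C\delta$, but with $\delta=\epsilon^{1/6}$ (the value imposed in Theorem \ref{fine}) an $O(\delta)$ defect only yields $\mathscr{F}(\hat{\mathscr{G}})-C\epsilon^{1/6}$, not the claimed $\epsilon^{1/3}$. The standard computation (Lemma 2.3 in \cite{AF+}, used in the paper) gives the sharper defect $C\delta^2=C\epsilon^{2\alpha}$, and it is exactly the balance $\epsilon^{1/2-\alpha}\sim\epsilon^{2\alpha}\sim\epsilon^{1-2\alpha-\beta}$ at $\alpha=\tfrac16$, $\beta=\tfrac13$ that produces the exponent $\tfrac13$. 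You should also separate, as the paper does, the points whose fibers are entirely contained in $\bar{\mathscr{I}}$ (the set $g^+$, controlled by $\vert\mathscr{I}\vert\geq\epsilon^\beta\vert g^+\vert$) from those whose fibers exit into two distinct phases; only on the latter does the one-dimensional bound apply.
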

\begin{proof}
We have seen that for each $x\in\hat{\mathscr{G}}$, which does not coincide with an end or with a branching point, there is a normal line $\{x+t\nu_x, t\in\R\}$ to $\hat{\mathscr{G}}$ at $x$. We let $F_x$ be the connected component of $\{x+t\nu_x, \vert t\vert\leq\epsilon^\beta\}\cap\bar{\mathscr{I}}$ that contains $x$.
Given $x_i\in\hat{\mathscr{G}}$, $i=1,2$, we let $\hat{d}(x_1,x_2)$ the length of the arc in $\hat{\mathscr{G}}$ that connects $x_1$ to $x_2$ and has minimal length.
\begin{lemma}
\label{nearBra}
Assume the same as in Proposition \ref{LB3}.
Assume that $x,y\in\hat{\mathscr{G}}$, $t_x,t_y$ and $\gamma^x,\gamma^y\subset\hat{\mathscr{G}}$ satisfy
$x=\gamma^x(t_x)$, $y=\gamma^y(t_y)$ with $(\gamma^x,t_x)\neq(\gamma^y,t_y)$.
Then there exists $e_\epsilon$ such that $F_x\cap F_y\neq\emptyset$ implies
\[\hat{d}(x,p)\leq C\epsilon^{\frac{1}{2}-\alpha},\]
for some branching point $p\in\hat{\mathscr{G}}$.
\end{lemma}
\begin{proof}
We give a detailed proof only for the case where there exists $a\in A$ such that $x,y\in\hat{\mathscr{C}}_a=\partial\hat{S}_a$ and $F_x\cap F_y\cap\hat{S}_a\neq\emptyset$.
Let $\xi$ be a point in $F_x\cap F_y\cap\hat{S}_a$, by definition of $F_x$ and $F_y$ $\mathrm{sg}[x,\xi]\cup\mathrm{sg}[\xi,y]$ is contained in $\bar{\mathscr{I}}$ and therefore divides $\hat{S}_a$ in two parts and $h_3$ implies that $\Omega_a$ is contained in just one of them. We let $E$ be closure of the part that does not contain  $\Omega_a$. $\partial E$ is a Jordan curve which is the union of $\mathrm{sg}[x,\xi]\cup\mathrm{sg}[\xi,y]$, of suitable sub arcs of $\gamma^x$ and $\gamma^y$ and of a number $K$ of arcs $\gamma_1,\ldots\gamma_k\subset\hat{\mathscr{C}}_a$ which successively connect branching points $p_1,\ldots,p_{K+1}$.

We observe that the proof is quite straightforward if
\begin{equation}
\sigma_{aa^\prime}=\sigma,\;\;a\neq a^\prime\in A.
\label{equal}
\end{equation}
Let $l_1,\ldots,l_K$ be the length of $\gamma_1,\ldots,\gamma_K$ and $l_0,l_{K+1}$ the length of the sub arcs of $\gamma^x,\gamma^y$ on $\partial E$. Chose $h\in\{0,\ldots,K+1\}$ so that $l_h=\max\{l_0,\ldots,l_{K+1}\}$. Let $\hat{S}_{a_0},\ldots,\hat{S}_{a_{K+1}}$ be the sets determined by the conditions: $\gamma_i\subset\hat{\mathscr{C}}_{a_i}$, $a_i\neq a$, $i=0,\ldots,K+1$. Let
$\tilde{\mathscr{G}}$ be the network obtained by replacing the arc $\gamma_h$ with $\mathrm{sg}[x,\xi]\cup\mathrm{sg}[\xi,y]$, $\hat{S}_a$ with $\tilde{S}_a=\hat{S}_a\setminus E$ and $\hat{S}_{a_h}$ with $\tilde{S}_{a_h}= (\hat{S}_{a_h}\cup E)^\circ$. Note that the network $\tilde{\mathscr{G}}$ has the same number of arcs and the same number of branching points as $\hat{\mathscr{G}}$. Moreover $E\subset\bar{\mathscr{I}}$ implies $\tilde{\mathscr{G}}\subset\bar{\mathscr{I}}$ and therefore $\tilde{\mathscr{G}}\in\bar{\mathcal{G}}$. Then the minimality of  $\hat{\mathscr{G}}$ implies
\begin{equation}
\sigma_{a_ha}l_h+\sum_{i\neq h}\sigma_{a_ia}l_i\leq\sum_{i\neq h}\sigma_{a_ia_h}l_i+\sigma_{aa_h}(\vert\xi-x\vert+\vert y-\xi\vert).
\label{+opt}
\end{equation}
This $\vert F_x\vert,\vert F_y\vert\leq 2\epsilon^\beta$ and \eqref{equal} yield
\begin{equation}
l_i\leq l_h\leq4\epsilon^\beta.
\label{g*1}
\end{equation}
To extend the proof to the general case where we have merely \eqref{triangle} instead of \eqref{equal} we make essential use of the fact already observed in the proof of Proposition \ref{opt-G}, that $E\subset\bar{\mathscr{I}}$ implies that the arcs $\gamma_i$, $i=0,\ldots,K+1$ turn their convexity inward $E$.
This property of $E$ implies that, if $\mathcal{B}$ is the set of the balls contained in $E$, the subset
$\mathcal{B}_M\subset\mathcal{B}$ of the balls $B\in\mathcal{B}$ such that $\partial B\cap\partial E$
 contains three distinct points is finite and $\sharp\mathcal{B}_M\leq  n_N$ where $n_N$ is a number that depends only $N=\sharp A$. We also note that from \eqref{|I|} and $\delta=\epsilon^\alpha$ it follows that, if $B_r(z)$ is one of the balls in $\mathcal{B}_M$, then
\begin{equation}
r\leq\frac{1}{\sqrt{\pi}}\epsilon^{\frac{1}{2}-\alpha}.
\label{r<}
\end{equation}

\begin{figure}
  \begin{center}
\begin{tikzpicture}[scale=1]
\draw [] (-1.5,0) circle [radius=1];
\draw [] (2,0) circle [radius=1.5];
\draw [blue] (-1,.866) to [out=-30,in=210] (1.25,1.299);
\draw [blue] (-1,-.866) to [out=30,in=150] (1.25,-1.299);
\draw [blue] (-1,.866)--(-1.866,1.366);
\draw [blue] (-1,-.866)--(-1.866,-1.366);
\draw [blue] (1.25,1.299)--(2.116,1.799);
\draw [blue] (1.25,-1.299)--(2.116,-1.799);
\node[left] at (-1.866,-1.366) {$p_1^-$};
\node[left] at (-1.866,1.366) {$p_1^+$};
\node[] at (-1.35,-1.61) {$\gamma_1^{p,-}$};
\node[] at (-1.35,1.61) {$\gamma_1^{p,+}$};
\path[fill] (-1.866,-1.366) circle [radius=.035];
\path[fill] (-1.866,1.366)circle [radius=.035];
\path[fill] (-1,-.866) circle [radius=.035];
\path[fill] (-1,.866)circle [radius=.035];
\path[fill] (1.25,-1.299) circle [radius=.035];
\path[fill] (1.25,1.299) circle [radius=.035];
\node[below] at (0,-.7) {$\gamma_1^-$};
\node[above] at (0,.7) {$\gamma_1^+$};
\node[left] at (-1.5,0) {$B_1$};
\node[right] at (2,0) {$B_2$};
\draw[blue,dotted] (-1,-.866)--(-1,.866);
\draw[blue,dotted] (1.25,-1.299)--(1.25,+1.299);
\node[below] at (-1,-.866) {$z_1^-$};
\node[above] at (-1,.866) {$z_1^+$};
\node[below] at (1.25,-1.299) {$\zeta_1^-$};
\node[above] at (1.25,1.299) {$\zeta_1^+$};
\end{tikzpicture}
\end{center}
\caption{Neighboring balls $B_1, B_2\in\mathcal{B}_M$.}
\label{neigh}
\end{figure}
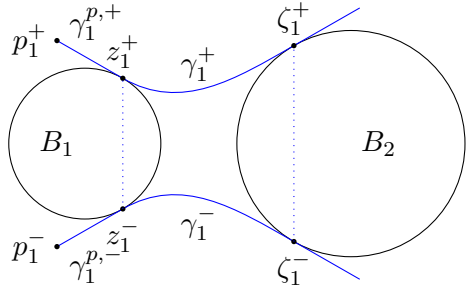

We say that $B_1\in\mathcal{B}_M$ and $B_2\in\mathcal{B}_M$ are neighbors, see Figure \ref{neigh}, if there are arcs $\gamma_1^\pm\subset(\partial E\cap\hat{\mathscr{C}}_a)$ which do not contain branching points, are tangent to $B_1$ and $B_2$ and together with the segments $\mathrm{sg}[z_1^-, z_1^+]$, $\mathrm{sg}[\zeta_1^-, \zeta_1^+]$ $\{z_1^\pm\}=\gamma_1^\pm\cap B_1$, $\{\zeta_1^\pm\}=\gamma_1^\pm\cap B_2$ $i=1,2$, compose a Jordan curve with interior contained in $E$.

The arc $\gamma_1^\pm$ is a sub arc of, say $\hat{\gamma}_1^\pm$, one of the $n_s$ arcs that compose $\hat{\mathscr{G}}$. We let $p_1^\pm$ be the branching point, extreme of $\hat{\gamma}_1^\pm$, such that $z_1^\pm$ belongs to the sub arc of $\hat{\gamma}_1^\pm$ with extremes  $p_1^\pm$ and  $\zeta_1^\pm$ and denote  $\gamma_1^{p,\pm}$ the sub arc of $\hat{\gamma}_1^\pm$ with extremes $p_1^\pm$ and  $z_1^\pm$ .

$B\in\mathcal{B}_M$ can have one, two or more neighbors. It follows that $E$ has the structure of a tree. A branch of the tree is a sequence $\{B_j\}_{j=1}^J\subset\mathcal{B}_M$ such that $B_j$ and $B_{j+1}$ are neighbors for $j=1,\ldots,J-1$; $B_j$ has two  neighbors for $j=2,\ldots,J-1$ and $B_1$ and $B_J$ have either just one or more than two  neighbors.

To begin with we consider a branch such that $B_2$ is the unique neighbor of $B_1$ and show that the length of the sub arc of $\partial E$ which touches $B_1$ and has the extremes in $\zeta_{J-1}^\pm\in\partial B_J$ is small in $\epsilon$.

For each $1\leq j<J$ we let $\psi_j\subset\partial E$ the arc which has the extremes in
 $z_j^\pm\in\partial B_j$ and touches $B_1$.

 The arc $\psi_j$ is the union of:  say $K_j$, arcs $\hat{\gamma}_k\in\hat{\mathscr{G}}$, $k=1,\ldots,K_j$ that touches some of the ball $B_1,\ldots, B_j$ and are different from  $\hat{\gamma}_j^\pm$, and the arcs $\gamma_j^{p,\pm}$.

 Let $a_j^\pm\in A$ and $a_k\in A$ be defined by $\gamma_j^\pm\subset\partial\hat{S}_{a^\pm}$ and $\hat{\gamma}_k\subset\partial\hat{S}_{a_k}$, $k=1,\ldots,K_j$ and set $l_j^\pm=\vert \gamma_j^\pm\vert$, $\lambda_j^\pm=\vert\gamma_j^{p,\pm}\vert$, $l_k=\vert\hat{\gamma}_k\vert$, $k=1,\ldots,K_j$.

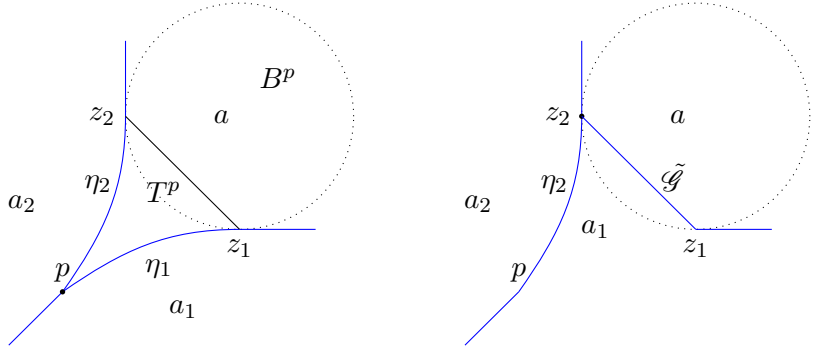
\begin{figure}
  \begin{center}
\begin{tikzpicture}[scale=1]
\draw [dotted] (-3,0) circle [radius=1.5];
\draw [dotted] (3,0) circle [radius=1.5];
\draw [blue] (-5.3284,-2.3284) to [out=35,in=180] (-3,-1.5);
\draw [blue] (-5.3284,-2.3284) to [out=55,in=270] (-4.5,0);
\draw [blue] (-4.5,0)--(-4.5,1);
\draw [blue] (-3,-1.5)--(-2,-1.5);
\draw [blue] (-5.3284,-2.3284)--(-6.0355,-3.0355);
\node[left,above] at (-5.3284,-2.3284) {$p$};
\node[] at (-4,-1) {$T^p$};
\node[] at (-2.5,.5) {$B^p$};
\node[below] at (-4.061,-1.761) {$\eta_1$};
\node[above] at (-4.861,-1.161) {$\eta_2$};
\node[below,right] at (-4.061,-2.561) {$a_1$};
\node[] at (-5.861,-1.161) {$a_2$};
\node[left] at (-3,0) {$a$};
\node[below] at (-3,-1.5) {$z_1$};
\node[left] at (-4.5,0) {$z_2$};
\draw []  (-3,-1.5)--(-4.5,0);
\path [fill] (-5.3284,-2.3284) circle [radius=.035];
\draw [blue] (.6716,-2.3284) to [out=55,in=270] (1.5,0);
\draw [blue] (1.5,0)--(1.5,1);
\draw [blue] (3,-1.5)--(4,-1.5);
\draw [blue] (.6716,-2.3284)--(-.0355,-3.0355);
\node[left,above] at (.6716,-2.3284) {$p$};
\node[above] at (1.139,-1.161) {$\eta_2$};
\node[below] at (3,-1.5) {$z_1$};
\node[left] at (1.5,0) {$z_2$};
\node[left] at (2,-1.5) {$a_1$};
\node[] at (.139,-1.161) {$a_2$};
\node[left] at (3,0) {$a$};
\node[below] at (2.7,-.5) {$\tilde{\mathscr{G}}$};
\path [fill] (1.5,0) circle [radius=.035];
\draw [blue] (3,-1.5)--(1.5,0);
\end{tikzpicture}
\end{center}
\caption{$p$, $B^p$, $T^p$ and the construction of $\tilde{\mathscr{G}}$.}
\label{p}
\end{figure}

 Note that for each branching point $p\in\partial E$, see Figure \ref{p}, there is $B^p\in\mathcal{B}_M$ and arcs $\eta_i\subset\partial E$, $i=1,2$ which originate in $p$, are tangent to $B^p$ and do not contain branching points different from $p$. We let $\{z_i\}=\eta_i\cap\partial B^p$ be the point of tangency of $\eta_i$ with $B^p$, $i=1,2$ and denote $T^p\subset E$ the closure of interior of the Jordan curve determined by  $\mathrm{sg}[z_1,z_2]$ and $\eta_1$ and $\eta_2$. Let $\hat{S}_a$, $\hat{S}_{a_i}$, $i=1,2$, that meet at $p$. Denote $l_i, i=1,2,$ the length of $\eta_i$, assume, for definiteness, that $l_1\geq l_2$ and  consider the network $\tilde{\mathscr{G}}$ obtained from  $\hat{\mathscr{G}}$ by replacing $\eta_1$ with the segment $\mathrm{sg}[z_1,z_2]$ and  $\hat{S}_{a_i}$ with $(\tilde{S}_{a_i}=(\hat{S}_{a_i}\cup T^p)^\circ$ and $\hat{S}_a$ with $\hat{S}_a\setminus T^p$. By construction, see Figure \ref{p}, the network $\tilde{\mathscr{G}}$ has the same number of arcs and of branching points as $\hat{\mathscr{G}}$ and since $T^p$ is a subset of $E\subset\bar{\mathscr{I}}$ is contained $\bar{\mathscr{I}}$. Hence the minimality of $\hat{\mathscr{G}}$ implies
\begin{equation}
\begin{split}
&\sigma_{a_1a}l_1+\sigma_{a_2a}l_2\leq\sigma_{a_1a_2}l_2+\sigma_{a_1,a}\vert z_1-z_2\vert,\\
&\Rightarrow\quad
\sigma_{a_1a}(l_1-l_2)+(\sigma_{a_1a}+\sigma_{a_2a}-\sigma_{a_1a_2})l_2
\leq\sigma_{a_1,a}\frac{2}{\sqrt{\pi}}\epsilon^{\frac{1}{2}-\alpha},\\
&\Rightarrow\quad l_i\leq\rho\epsilon^{\frac{1}{2}-\alpha},\;\;i=1,2,
\end{split}
\label{p-opt}
\end{equation}
where we have used \eqref{triangle}, \eqref{r<} and indicated with $\rho>0$ a constant that depends only on the value of $\sigma_{aa^\prime}$, $a\neq a^\prime\in A$.

\begin{figure}
  \begin{center}
\begin{tikzpicture}[scale=1]
\draw[dotted] (-2,2.5) circle [radius=1];
\draw[dotted] (-2,-2.5) circle [radius=1];
\draw[dotted] (2,2.5) circle [radius=1];
\draw[dotted] (2,-2.5) circle [radius=1];
\draw[dotted] (-7,2.5) circle [radius=1];
\draw[dotted] (-7,-2.5) circle [radius=1];
\draw [blue] (-1.5,3.366) to [out=-30,in=180] (0,3);
\draw [blue] (-1.5,1.634) to [out=30,in=180] (0,2);
\draw [blue] (0,3) to [out=0,in=210] (1.5,3.366);
\draw [blue] (0,2) to [out=0,in=150](1.5,1.634) ;
\draw [blue] (-1.5,-1.634) to [out=-30,in=180] (0,-2);
\draw [blue] (0,-2) to [out=0,in=210] (1.5,-1.634);
\draw [blue] (2.5,3.366) to [out=-30,in=180] (4,3);
\draw [blue] (2.5,1.634) to [out=30,in=180] (4,2);
\draw [blue] (2.5,-1.634) to [out=-30,in=180] (4,-2);
\draw [blue] (2.5,-3.366) to [out=30,in=180] (4,-3);
\draw [blue] (-6.5,3.366) to [out=-30,in=180] (-5,3);
\draw [blue] (-6.5,1.634) to [out=30,in=180] (-5,2);
\draw [blue] (-6.5,-1.634) to [out=-30,in=180] (-5,-2);
\draw [blue] (-6.5,-3.366) to [out=30,in=180] (-5,-3);
\draw [blue] (-4,3) to [out=0,in=210] (-2.5,3.366);
\draw [blue] (-4,2) to [out=0,in=150](-2.5,1.634) ;
\draw [blue] (-4,-2) to [out=0,in=210] (-2.5,-1.634);
\draw [blue] (-4,-3) to [out=0,in=150](-2.5,-3.366) ;
\draw [blue] (-1.5,3.366) to (-2,3.655);
\draw [blue] (-1.5,1.634) to (-2,1.345);
\draw [blue] (-1.5,-1.634) to (-2,-1.345);
\draw [blue] (2.5,3.366) to (2,3.655);
\draw [blue] (2.5,1.634) to (2,1.345);
\draw [blue] (2.5,-3.366) to (2,-3.655);
\draw [blue] (2.5,-1.634) to (2,-1.345);
\draw [blue] (-6.5,3.366) to (-7,3.655);
\draw [blue] (-6.5,1.634) to (-7,1.345);
\draw [blue] (-6.5,-3.366) to (-7,-3.655);
\draw [blue] (-6.5,-1.634) to (-7,-1.345);
\draw [blue] (-2.5,3.366) to (-2,3.655);
\draw [blue] (-2.5,1.634) to (-2,1.345);
\draw [blue] (-2.5,-3.366) to (-2,-3.655);
\draw [blue] (-2.5,-1.634) to (-2,-1.345);
\draw [blue] (1.5,3.366) to (2,3.655);
\draw [blue] (1.5,1.634) to (2,1.345);
\draw [blue] (1.5,-3.366) to (2,-3.655);
\draw [blue] (1.5,-1.634) to (2,-1.345);
\draw [blue] (-7.5,3.366) to (-7,3.655);
\draw [blue] (-7.5,1.634) to (-7,1.345);
\draw [blue] (-7.5,-3.366) to (-7,-3.655);
\draw [blue] (-7.5,-1.634) to (-7,-1.345);
\draw [blue] (-7.5,3.366) to (-9,2.5);
\draw [blue] (-7.5,1.634) to (-9,2.5);
\draw [blue] (-7.5,-3.366) to (-9,-2.5);
\draw [blue] (-7.5,-1.634) to (-9,-2.5);
\draw[dotted](-5,3)-- (-4,3);
\draw[dotted](-5,2)-- (-4,2);
\draw[dotted](-5,-3)-- (-4,-3);
\draw[dotted](-5,-2)-- (-4,-2);
\draw[dotted](-5,3)-- (-4,3);
\draw[dotted](-5,2)-- (-4,2);
\draw[dotted](-5,-3)-- (-4,-3);
\draw[dotted](-5,-2)-- (-4,-2);
\draw[dotted](-5,3)-- (-4,3);
\draw[dotted](-5,2)-- (-4,2);
\draw[dotted](-5,-3)-- (-4,-3);
\draw[dotted](-5,-2)-- (-4,-2);
\draw[dotted](4.5,3)-- (4,3);
\draw[dotted](4.5,2)-- (4,2);
\draw[dotted](4.5,-3)-- (4,-3);
\draw[dotted](4.5,-2)-- (4,-2);
\draw[blue] (-7,3.655)--(-7,4.155);
\draw[blue] (-7,1.345)--(-7,.845);
\draw[blue] (-7,-3.655)--(-7,-4.155);
\draw[blue] (-7,-1.345)--(-7,-.845);
\draw[blue] (-2,3.655)--(-2,4.155);
\draw[blue] (-2,1.345)--(-2,.845);
\draw[blue] (-2,-3.655)--(-2,-4.155);
\draw[blue] (-2,-1.345)--(-2,-.845);
\draw[blue] (2,3.655)--(2,4.155);
\draw[blue] (2,1.345)--(2,.845);
\draw[blue] (2,-3.655)--(2,-4.155);
\draw[blue] (2,-1.345)--(2,-.845);
\draw[blue] (-9.5,2.5)--(-9,2.5);
\draw[blue] (-9.5,-2.5)--(-9,-2.5);
\draw[blue] (1.5,-1.634)--(1.5,-3.366);
\node[] at (0,2.5) {$a$};
\node[] at (-5,2.5) {$a$};
\node[] at (-5,2.5) {$a$};
\node[] at (4,2.5) {$a$};
\node[] at (-8,3.4) {$a_k$};
\node[] at (-8,1.6) {$a_{k+1}$};
\node[] at (-8,-1.6) {$a_k$};
\node[] at (-8,-3.4) {$a_{k+1}$};
\node[] at (0,3.5) {$a_j^+$};
\node[] at (0,1.5) {$a_j^-$};
\node[] at (0,-1.5) {$a_j^+$};
\node[] at (0,-3) {$a_j^-$};
\node[] at (-5,-2.5) {$a_j^-$};
\node[] at (4,-2.5) {$a$};
\node[left] at (-7,2.5) {$B_1$};
\node[left] at (-7,-2.5) {$B_1$};
\node[left] at (-2,2.5) {$B_j$};
\node[left] at (-2,-2.5) {$B_j$};
\node[right] at (2,2.5) {$B_{j+1}$};
\node[right] at (2,-2.5) {$B_{j+1}$};
\path [fill] (-1.5,3.366) circle [radius=.035];
\node[above] at (-1.5,3.366) {$z_j^+$};
\path [fill] (-1.5,1.634) circle [radius=.035];
\node[below] at (-1.5,1.634) {$z_j^-$};
\path [fill] (-1.5,3.366) circle [radius=.035];

\path [fill] (-1.5,-1.634) circle [radius=.035];
\node[above] at (-1.5,-1.634) {$z_j^+$};
\path [fill=red] (-2,1.345) circle [radius=.04];
\node[above] at (-2,1.345) {$p_j^-$};
\path [fill] (1.5,3.366) circle [radius=.035];
\node[above] at (1.5,3.366) {$\zeta_j^+$};
\path [fill] (1.5,1.634) circle [radius=.035];
\node[below] at (1.5,1.634) {$\zeta_j^-$};
\path [fill] (1.5,-3.366) circle [radius=.035];
\node[below] at (1.5,-3.366) {$\zeta_j^-$};
\path [fill=red] (1.5,-1.634) circle [radius=.045];
\node[above] at (1.5,-1.634) {$\zeta_j^+$};
\path [fill] (2.5,3.366) circle [radius=.035];
\node[above] at (2.5,3.366) {$z_{j+1}^+$};
\path [fill] (2.5,1.634) circle [radius=.035];
\node[below] at (2.5,1.634) {$z_{j+1}^-$};
\path [fill] (2.5,-3.366) circle [radius=.035];
\node[below] at (2.5,-3.366) {$z_{j+1}^-$};
\path [fill] (2.5,-1.634) circle [radius=.045];
\node[above] at (2.5,-1.634) {$z_{j+1}^+$};
\end{tikzpicture}
\end{center}
\caption{A branch of $E$ and the replacement of $\lambda_j^{p,-}\cup\gamma_j^-$ with $\mathrm{sg}[\zeta_j^-,\zeta_j^+]$. The old branching point $p_j^-$ and the new branching point $\zeta_j^+$ are marked in red.}
\label{branch}
\end{figure}
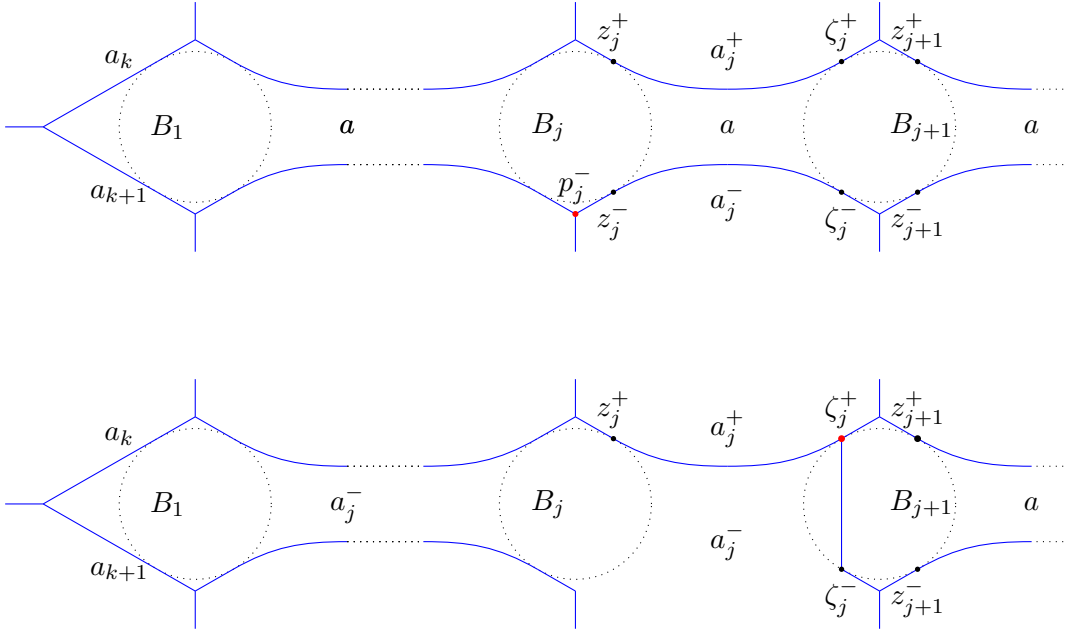

Now we show that, if there are $1\leq j<J-1$ and a constant $C_j>0$ such that $\vert\psi_j\vert\leq C_j\epsilon^{\frac{1}{2}-\alpha}$, then there is a constant $C_{j+1}>0$ such that $\vert\psi_{j+1}\vert\leq C_{j+1}\epsilon^{\frac{1}{2}-\alpha}$. We assume for definiteness that $l_j^-\geq l_j^+$. Then, as before, we
construct a network $\tilde{G}\subset\bar{\mathscr{I}}$ and prove the claim by using the minimality of $\hat{\mathscr{G}}$ which implies $\mathscr{F}(\hat{\mathscr{G}})\leq\mathscr{F}(\tilde{\mathscr{G}})$.
$\tilde{\mathscr{G}}$, see Figure \ref{branch}, is defined by replacing the arc $\gamma_j^{p,-}\cup\gamma_j^-$ with the segment $\mathrm{sg}[\zeta_j^-,\zeta_j^+]$ and defining the sets $\tilde{S}$ accordingly. The minimality of $\hat{\mathscr{G}}$ implies
\begin{equation}
\begin{split}
&\sigma_{a_j^-a}(l_j^-+\lambda_j^-)+\sum_{k=1}^{K_j}\sigma_{aa_k}l_k+\sigma_{a_j^+a}(l_j^++\lambda_j^+)\\
&\leq\sum_{k=1}^{K_j}\sigma_{a_j^-a_k}l_k +\sigma_{a_j^-a_j^+}(l_j^++\lambda_j^+)
+\sigma_{a_j^-a}\vert\zeta_j^--\zeta_j^+\vert,\\
&\Rightarrow\\
&\sigma_{a_j^-a}(l_j^--l_j^+)+(\sigma_{a_j^-a}+\sigma_{a_j^+a}-\sigma_{a_j^+a_j^-})l_j^+\\
&\leq\sum_{k=1}^{K_j}\sigma_{a_j^-a_k}l_k+\sigma_{a_j^-a_j^+}\lambda_j^++\sigma_{a_j^-a}\vert\zeta_j^--\zeta_j^+\vert.
\end{split}
\label{comp}
\end{equation}
From the induction assumption we have $\sum_{k=1}^{K_j}\sigma_{a_j^-a_k}l_k+\lambda_j^+\leq C_j\epsilon^{\frac{1}{2}-\alpha}$. This and \eqref{r<} imply that the right hand side of \eqref{comp} is bounded by $\sigma_M(C_j+\frac{2}{\sqrt{\pi}})\epsilon^{\frac{1}{2}-\alpha}$ where $\sigma_M=\max_{a\neq a^\prime}\sigma_{a,a^\prime}$. From this bound  \eqref{comp} and \eqref{triangle} we finally obtain
\begin{equation}
l_j^\pm\leq \frac{\sigma_M}{\sigma^*}(C_j+\frac{2}{\sqrt{\pi}})\epsilon^{\frac{1}{2}-\alpha},
\label{lpm}
\end{equation}
where $\sigma^*=\min\{\sigma_{aa^\prime},\sigma_{aa^\prime}+\sigma_{aa^{\prime\prime}}-\sigma_{a^\prime a^{\prime\prime}}\}$.
After $J-2$ such steps we conclude that all the arcs on the boundary of the branch satisfy a bound of the form \eqref{lpm}. The same procedure can be applied to all the branch that compose the boundary of $E$. This concludes the proof.
\end{proof}
Continuing with the proof of Proposition \ref{LB3} we observe that an
 important property of $\hat{\mathscr{G}}$ is the following: if $x\in\gamma$, $\gamma$ one of the arcs in $\hat{\mathscr{G}}$, and the curvature of $\gamma$ at $x$ is different from zero, then $F_x$ and the center of curvature of $\gamma$ at $x$ lie on opposite sides with respect to $x$ (see Figure \ref{opps}).

It follows that, if $\eta$ is an subarc of one of the $n_s$ arcs in $\hat{\mathscr{G}}$ and
\[F_x\cap F_y=\emptyset,\quad x\in\eta, y\in\hat{\mathscr{G}}\setminus\eta,\] then we have:
\[\begin{split}
&\vert\cup_{x\in\eta}F_x\vert\geq\int_{x\in\eta}\vert F_x\vert dx,\\
&J^\epsilon_{\cup_{x\in\eta}F_x}(u^\epsilon)\geq\int_\gamma J_x dx,
\end{split}\]
where  $J_x$ is the one-dimensional energy of $u^\epsilon\vert_{F_x}$.
 \begin{figure}
  \begin{center}
    \begin{tikzpicture}[scale=.55]
\draw[] (-.5,-.86602)--(.5,.86602);
\draw[] (.5,.86602) arc  [radius=1.73205, start angle=150, end angle= 120];
\draw[] (-.5,-.86602) arc  [radius=1.73205, start angle=330, end angle= 300];
\draw[] (1.13398,1.5)--(2,2);
\draw[] (-1.13398,-1.5)--(-2,-2);
\draw[red,  line width=2.7] (.56495,.828525) arc  [radius=1.73205, start angle=150, end angle= 90];
\draw[blue,  line width=2.7] (-.56495,-.828525) arc  [radius=1.65705, start angle=330, end angle= 270];
\draw[] (-.86602,.5)--(.86602,-.5);
\draw[] (.77525,1.22474)--(.06815,1.93184);
\draw[] (-.77525,-1.22474)--(-.06815,-1.93184);
\node[right] at (-.86602,.5) {$F_x$};
\node[left] at (.35,1.53184) {$F_x$};
\node[right] at (-.35,-1.53184) {$F_x$};
\node[] at (.2,0) {$x$};
\node[] at (.77525,1.42474) {$x$};
\node[] at (-.77525,-1.42474) {$x$};
\draw[->] (-.7,-.4)--(-.3,-.4);
\node[left] at (-.7,-.4) {$\gamma$};
\node[left] at (-1.5,-1) {$\Omega_a$};
\node[right] at (1.5,1) {$\Omega_{a^\prime}$};
 \end{tikzpicture}
 \end{center}
\caption{If the curvature of $\gamma$ at $x\in\gamma$ is non zero, $F_x$ and the center of curvature lie on opposite sides with respect to $x$.}
\label{opps}
\end{figure}
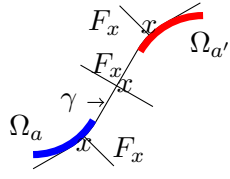
\vskip.1cm
 Define
\[\begin{split}
&g^*=\{x\in\hat{\mathscr{G}}: F_x\cap F_y\neq\emptyset,;\text{for some}\;y\in\hat{\mathscr{G}},\;y\neq x\},\\
&g^-=\{x\in\hat{\mathscr{G}}\setminus g^*:\vert F_x\vert<\epsilon^\beta\},\\
&g^+=\{x\in\hat{\mathscr{G}}\setminus g^*:\vert F_x\vert\geq\epsilon^\beta\},\\\\
&\mathscr{I}^\pm=\{y: y\in F_x,\;\text{for some}\;x\in g^\pm\}.\end{split}\]
\noindent
Lemma \ref{nearBra} and the fact $n_b$ is a finite number imply
\begin{equation}
\vert g^*\vert\leq C\epsilon^{\frac{1}{2}-\alpha}.
\label{g*}
\end{equation}
\noindent
From \eqref{|I|} we obtain
\begin{equation}
\begin{split}
& C\epsilon^{1-2\alpha}\geq\vert\mathscr{I}^+\vert\geq\int_{g^+}\vert F_x\vert dx\geq\epsilon^\beta\vert g^+\vert,\\
&\Rightarrow \vert g^+\vert\leq C\epsilon^{1-2\alpha-\beta}.
\end{split}
\label{g+}
\end{equation}
\noindent
A standard computation, see for instance Lemma 2.3 in \cite{AF+}, implies that,
for $x\in\gamma\cap g^-$ it results
\[J_x\geq\sigma_{a_\gamma a_\gamma^\prime}-C\delta^2\geq\sigma_{a_\gamma a_\gamma^\prime}-C\epsilon^{2\alpha},\]
and therefore
\begin{equation}
\begin{split}
&J_{\mathscr{I}^-}^\epsilon(u^\epsilon)\geq\sum_{\gamma\in\hat{G}}(\sigma_{a_\gamma a_\gamma^\prime}-C\delta^2)(\vert\gamma\vert-\vert g^*\vert-\vert g^+\vert)\\
&\geq\mathscr{F}(\hat{\mathscr{G}})-C(\vert g^*\vert+\vert g^+\vert+\delta^2),\\
&\geq\mathscr{F}(\hat{\mathscr{G}})-C(\epsilon^{\frac{1}{2}-\alpha}+\epsilon^{1-2\alpha-\beta}+\epsilon^{2\alpha}),
\end{split}
\label{LBfirst}
\end{equation}
where we have used \eqref{g*},\eqref{g+} and $\delta=\epsilon^\alpha$.
From \eqref{LBfirst}, $\alpha=\frac{1}{6}$ and $\beta=\frac{1}{3}$ we obtain
\[J_\Omega^\epsilon(u^\epsilon)\geq\mathscr{F}(\hat{\mathscr{G}})-C\epsilon^\frac{1}{3}.\]
This concludes the proof.
\end{proof}
\begin{proposition}
\label{dhatg-gf}
There is a constant $C_2>0$ such that
\[d(\hat{\mathscr{G}},\mathscr{G}_f)\leq C_2\epsilon^\frac{1}{6}.\]
\end{proposition}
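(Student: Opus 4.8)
The plan is to derive Proposition~\ref{dhatg-gf} by combining the lower bound of Proposition~\ref{LB3} with an upper bound for the energy of a minimizer, and then converting the resulting smallness of $\mathscr{F}(\hat{\mathscr{G}})-\mathscr{F}(\mathscr{G}_f)$ into smallness of the distance $d(\hat{\mathscr{G}},\mathscr{G}_f)$ by means of the nondegeneracy inequality \eqref{GIn}. First I would record that, since $\hat{\mathscr{G}}\in\bar{\mathcal{G}}\subset\bar{\mathcal{G}}_f$ and $\mathscr{F}(\mathscr{G}_f)=\min_{\bar{\mathcal{G}}_f}\mathscr{F}$ for every $\mathscr{G}_f\in\mathscr{H}_f$, one has $\mathscr{F}(\hat{\mathscr{G}})\ge\mathscr{F}(\mathscr{G}_f)$. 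On the other hand, Proposition~\ref{LB3} (which uses $h_3$ and \eqref{triangle}, both granted in Theorem~\ref{fine}) gives $J_\Omega^\epsilon(u^\epsilon)\ge\mathscr{F}(\hat{\mathscr{G}})-C_1\epsilon^{1/3}$, while the comparison map built in Lemma~\ref{UBerr} gives $J_\Omega^\epsilon(u^\epsilon)\le\mathscr{F}(\mathscr{G}_f)+C\epsilon|\ln\epsilon|^2$. Hence, for $\epsilon$ small,
\[
0\le\mathscr{F}(\hat{\mathscr{G}})-\mathscr{F}(\mathscr{G}_f)\le C_1\epsilon^{1/3}+C\epsilon|\ln\epsilon|^2\le C\epsilon^{1/3}.
\]

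Next I would fix a minimizer $\mathscr{G}_f\in\mathscr{H}_f$ and replace $\hat{\mathscr{G}}$ by a competitor $\hat{\mathscr{G}}'\in\bar{\mathcal{G}}_f$ having \emph{exactly} the same end points as $\mathscr{G}_f$, so that \eqref{GIn} applies verbatim. Both $\hat{\mathscr{G}}$ and $\mathscr{G}_f$ carry one end point in each arc $I_a$, $a\in\tilde{A}$, and by hypothesis \eqref{h} we have $|I_a|\le C\epsilon^{1/3}$; therefore it suffices to modify $\hat{\mathscr{G}}$ only in a small neighbourhood of each of its end points, sliding that end point along $\partial\Omega$ inside $I_a$ to the position prescribed by $\mathscr{G}_f$. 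This is the deformation of Lemma~\ref{gf-gf}: it preserves the combinatorial type of the network, the simple connectedness of the regions $S_a$ and the inclusions $\Gamma_a\subset\partial S_a\cap\partial\Omega$, so that $\hat{\mathscr{G}}'\in\bar{\mathcal{G}}_f$, and since each modification is confined to a set of diameter $O(\max_a|I_a|)$ it satisfies
\[
d(\hat{\mathscr{G}}',\hat{\mathscr{G}})\le C\max_a|I_a|\le C\epsilon^{1/3},\qquad |\mathscr{F}(\hat{\mathscr{G}}')-\mathscr{F}(\hat{\mathscr{G}})|\le C\max_a|I_a|\le C\epsilon^{1/3}.
\]
In particular $\mathscr{F}(\hat{\mathscr{G}}')-\mathscr{F}(\mathscr{G}_f)\le C\epsilon^{1/3}$.

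Finally, writing $D=d(\hat{\mathscr{G}}',\mathscr{G}_f)$, I would observe that for $\epsilon$ small one must have $D\le d_0$ — otherwise \eqref{GIn} with $d=d_0$ would force $\mathscr{F}(\hat{\mathscr{G}}')-\mathscr{F}(\mathscr{G}_f)\ge c_0d_0^2$, contradicting the bound just obtained — and then \eqref{GIn} applied with $d=D$ gives $c_0D^2\le\mathscr{F}(\hat{\mathscr{G}}')-\mathscr{F}(\mathscr{G}_f)\le C\epsilon^{1/3}$, that is $D\le C'\epsilon^{1/6}$. The triangle inequality for the metric \eqref{metric} then yields
\[
d(\hat{\mathscr{G}},\mathscr{G}_f)\le d(\hat{\mathscr{G}},\hat{\mathscr{G}}')+D\le C\epsilon^{1/3}+C'\epsilon^{1/6}\le C_2\epsilon^{1/6},
\]
which is the assertion.

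The only genuinely delicate ingredient — the matching lower bound for $J_\Omega^\epsilon(u^\epsilon)$ in terms of $\mathscr{F}(\hat{\mathscr{G}})$, where one must rule out the pathologies of Figure~\ref{sing}, control the overlap of the fibres $F_x$ and the curvature of $\hat{\mathscr{G}}$, and where $h_3$ enters — is already supplied by Proposition~\ref{LB3}; so what remains in this proof is the bookkeeping of the three error terms together with a careful check that the boundary deformation of Lemma~\ref{gf-gf} genuinely keeps $\hat{\mathscr{G}}'$ inside $\bar{\mathcal{G}}_f$ with errors of order $\max_a|I_a|$, which is precisely where \eqref{h} is used. A minor point is that $\mathscr{H}_f$ may a priori contain several minimizers: one should fix one of them at the outset, in which case the argument in fact shows that $\hat{\mathscr{G}}$ is $O(\epsilon^{1/6})$-close to \emph{every} element of $\mathscr{H}_f$ (and hence that all such minimizers lie within $O(\epsilon^{1/6})$ of one another), in accordance with the formulation of Theorem~\ref{fine}.
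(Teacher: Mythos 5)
Your proposal is correct and follows essentially the same route as the paper's proof: combine the lower bound of Proposition~\ref{LB3} with the upper bound from Lemma~\ref{UBerr} to get $\mathscr{F}(\hat{\mathscr{G}})-\mathscr{F}(\mathscr{G}_f)\leq C\epsilon^{1/3}$, pass to the end-point-matched competitor $\hat{\mathscr{G}}^\prime$ of Lemma~\ref{gf-gf}, and invoke the nondegeneracy inequality \eqref{GIn} followed by the triangle inequality. The additional care you take (checking $D\leq d_0$ before applying \eqref{GIn}, and noting $\mathscr{F}(\hat{\mathscr{G}})\geq\mathscr{F}(\mathscr{G}_f)$) is sound and only makes explicit what the paper leaves implicit.
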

\begin{proof}
From Proposition \ref{LB3} and \eqref{UB} with $e_\epsilon\leq C\epsilon\vert\ln{\epsilon}\vert^2$ that follows from Lemma \ref{UBerr} we obtain
\[\mathscr{F}(\hat{\mathscr{G}})-\mathscr{F}(\mathscr{G}_f)\leq C\epsilon^\frac{1}{3}.\]
This inequality and \eqref{gprime-g} with $e_\epsilon\leq C\epsilon^\frac{1}{3}$ (see Lemma \ref{gf-gf}) imply
\begin{equation}
\mathscr{F}(\hat{\mathscr{G}}^\prime)-\mathscr{F}(\mathscr{G}_f)\leq C\epsilon^\frac{1}{3},
\label{Sameend}
\end{equation}
where $\hat{\mathscr{G}}^\prime$ has the same end points as $\mathscr{G}_f$ and satisfies
\begin{equation}
d(\hat{\mathscr{G}}^\prime,\hat{\mathscr{G}})\leq C\epsilon\epsilon^\frac{1}{3}.
\label{dgprime-g}
\end{equation}
From \eqref{Sameend} and \eqref{GIn} we obtain
\[d(\hat{\mathscr{G}}^\prime,\mathscr{G}_f)\leq C\epsilon^\frac{1}{6},\]
which together with \eqref{dgprime-g} conclude the proof.
\end{proof}
We now show that Proposition \ref{dhatg-gf} and the minimality of $u^\epsilon$ imply the exponential estimate in Theorem \ref{fine}. Note that the definition \eqref{diffInt} of $\mathscr{I}$ and $\delta=\epsilon^\frac{1}{6}$
imply
\[\vert u^\epsilon(x)-a^\prime\vert\geq \epsilon^\frac{1}{6},\; \text{for}\; x\in\hat{S}_a,\;\;a^\prime\in A\setminus\{a\}.\]
If $S_{a,f}$ has positive measure, this and Proposition \ref{dhatg-gf} imply
\[\vert u^\epsilon(x)-a^\prime\vert\geq \epsilon^\frac{1}{6},\; \text{for}\; x\in S_{a,f},\;d(x,\mathscr{G}_f)\geq C_2\epsilon^\frac{1}{6},\;a^\prime\in A\setminus\{a\}.\]
Moreover we have
\begin{equation}
\begin{split}
& \vert u^\epsilon(x)-a\vert\leq \epsilon^\frac{1}{6},\; \text{for}\; x\in\Omega_a\cap S_{a,f},\\
&\vert\Omega_a\cap S_{a,f}\vert\geq\vert S_{a,f}\vert-C\epsilon^\frac{1}{6}.
\end{split}
\label{OmegaSaf}
\end{equation}
To see this we observe that $\Omega_a\subset\hat{S}_a$ and $\vert\mathscr{I}\vert\leq C\epsilon^\frac{2}{3}$ that follows from \eqref{|I|} with $\delta=\epsilon^\frac{1}{6}$ imply
\[\vert\Omega_a\vert\geq\vert\hat{S}_a\vert-C\epsilon^\frac{2}{3}.\]
On the other hand Proposition \ref{dhatg-gf} implies
\[\vert\hat{S}_a\cap S_{a,f}\vert\geq\vert S_{a,f}\vert-C\epsilon^\frac{1}{6},\]
and \eqref{OmegaSaf}$_2$ follows. Equation \eqref{OmegaSaf}$_1$ is just the definition of $\Omega_a$.

Let $\delta_0>0$ a number independent from $\epsilon$, for instance the number $\delta_0$ in \eqref{increas}.
\begin{proposition}
\label{<d}
There exists a constant $C_0>0$ such that
\[x\in S_{a,f}\;\;\text{and}\;\;d(x,\mathscr{G}_f)\geq C_0\epsilon^\frac{1}{6}\quad
\Rightarrow\quad\vert u^\epsilon(x)-a\vert\leq\delta_0.\]
\end{proposition}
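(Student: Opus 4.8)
The idea is to control the bad set $B_a:=\{x\in\hat S_a:\vert u^\epsilon(x)-a\vert>\delta_0\}$: first, by a maximum principle, every connected component of $B_a$ must reach the network $\hat{\mathscr{G}}$; second — the delicate point — no component can extend far from it.

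I would begin with a reduction to $\hat{\mathscr{G}}$. By Proposition~\ref{dhatg-gf}, $d(\hat{\mathscr{G}},\mathscr{G}_f)\le C_2\epsilon^{1/6}$, so $d(x,\hat{\mathscr{G}})\ge d(x,\mathscr{G}_f)-C_2\epsilon^{1/6}$; and, as already observed before the statement, $x\in S_{a,f}$ with $d(x,\mathscr{G}_f)\ge C_2\epsilon^{1/6}$ lies in $\hat S_a$ and satisfies $\vert u^\epsilon(x)-a'\vert\ge\epsilon^{1/6}$ for every $a'\ne a$. Hence it suffices to find $c_0>0$ such that $x\in\hat S_a$ and $d(x,\hat{\mathscr{G}})\ge c_0\epsilon^{1/6}$ imply $\vert u^\epsilon(x)-a\vert\le\delta_0$, and then to take $C_0:=c_0+C_2+1$. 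I would also record the shape of $\partial\hat S_a$ (Sections~\ref{3}, \ref{optimal}): $\partial\hat S_a\cap\Omega$ is a union of arcs of $\hat{\mathscr{G}}$, and, when $a\in\tilde A$, $\overline{\hat S_a}\cap\partial\Omega$ is an arc made of $\Gamma_a$ and two ``overshoot'' subarcs lying in the intervals $I$ adjacent to $\Gamma_a$; since $\vert I\vert\le C\epsilon^{1/3}$ by \eqref{h} and each overshoot arc has an endpoint of $\hat{\mathscr{G}}$ as one extremity, these overshoot arcs, and $\partial\Gamma_a$, lie within $C\epsilon^{1/3}$ of $\hat{\mathscr{G}}$; on $\Gamma_a$ one has $\vert u^\epsilon-a\vert=\vert v_0^\epsilon-a\vert\le\delta<\delta_0$. (When $a\notin\tilde A$, $\overline{\hat S_a}\subset\Omega$ and $\partial\hat S_a\subset\hat{\mathscr{G}}$.)

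The core step is a maximum principle argument. Let $D$ be a connected component of $B_a$. Then $\partial D\subset\{\vert u^\epsilon-a\vert=\delta_0\}\cup\hat{\mathscr{G}}\cup(\text{overshoot arcs})$, since $\Gamma_a$ is excluded ($\vert u^\epsilon-a\vert\le\delta$ there). If $\overline D$ did not meet the $C\epsilon^{1/3}$-neighbourhood of $\hat{\mathscr{G}}$, then $\partial D\subset\{\vert u^\epsilon-a\vert=\delta_0\}$, and since $\delta_0$ lies in the interval $(0,2\delta_0)$ where \eqref{increas} holds, the maximum principle for minimizers (Theorem~4.1 of \cite{afs}, exactly as used in Lemma~\ref{do-conn}) would give $\vert u^\epsilon-a\vert<\delta_0$ throughout $D$, contradicting $D\subset B_a$. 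Hence $\overline D$ meets the $C\epsilon^{1/3}$-neighbourhood of $\hat{\mathscr{G}}$. Also, on $D$ we have $\vert u^\epsilon-a\vert>\delta_0>\delta$ and $\vert u^\epsilon-a'\vert\ge\epsilon^{1/6}$ for $a'\ne a$, so $D\subset\mathscr{I}$ and $\vert D\vert\le\vert\mathscr{I}\vert\le C\epsilon^{2/3}$ by \eqref{|I|}.

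The main obstacle is to exclude that such a $D$ contains a point $x_0$ with $d(x_0,\hat{\mathscr{G}})>c_0\epsilon^{1/6}$; this is where $h_3$ is used. Since $D$ is connected and $\overline D$ touches the $C\epsilon^{1/3}$-neighbourhood of $\hat{\mathscr{G}}$, $D$ would contain an arc of length $\gtrsim c_0\epsilon^{1/6}$ joining that neighbourhood to $x_0$ along which $\vert u^\epsilon-a\vert>\delta_0$. I would rule this out by a comparison of the type used in Proposition~\ref{LB3} and Lemma~\ref{nearBra}. On the part of $D$ outside the $2C\epsilon^{1/3}$-neighbourhood of $\hat{\mathscr{G}}$ the corresponding part of $\partial D$ lies on the sphere $\vert z-a\vert=\delta_0$, i.e.\ in the monotonicity range of \eqref{increas}, so $u^\epsilon$ may be pushed radially towards $a$ inside $D$ with a cut-off; using that $\Omega_a$ is connected ($h_3$), so that $D$ does not separate $\Omega_a$ from the bulk of $\hat S_a$, the energy saved along the ``finger'' — bounded below through $W(u^\epsilon)\ge\tfrac12 c_W^2\epsilon^{1/3}$ on $D$ (see \eqref{cW}) over a portion of length $\gtrsim\epsilon^{1/6}$ — would exceed the Dirichlet cost of the cut-off, contradicting the minimality of $u^\epsilon$; alternatively, since $\overline D\subset\overline{\mathscr{I}}$, one reroutes $\hat{\mathscr{G}}$ through $\overline D$ to get $\tilde{\mathscr{G}}\in\bar{\mathcal G}$ with $\mathscr{F}(\tilde{\mathscr{G}})<\mathscr{F}(\hat{\mathscr{G}})$, contradicting the optimality of $\hat{\mathscr{G}}$. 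This would give $B_a\subset\{x:d(x,\hat{\mathscr{G}})\le c_0\epsilon^{1/6}\}$, and with the reduction above the Proposition follows. It is exactly this last step that fails without $h_3$ and without the a priori closeness supplied by Proposition~\ref{dhatg-gf}: the long fingers of the diffuse interface in Figure~\ref{sing} cannot otherwise be excluded.
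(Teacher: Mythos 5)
The heart of this proposition is quantitative, and it is exactly at the quantitative step that your argument has a genuine gap. Your reduction and your maximum-principle step (each component $D$ of the bad set must reach $\partial\hat S_a\setminus\Gamma_a$) are fine, but neither of the two mechanisms you offer to exclude a long finger closes the argument. The rerouting alternative is backwards: a component $D\subset\bar{\mathscr{I}}$ protruding \emph{away} from $\hat{\mathscr{G}}$ cannot be used to build a shorter admissible network, and the minimality of $\hat{\mathscr{G}}$ in $\bar{\mathcal{G}}$ says nothing about pieces of $\bar{\mathscr{I}}$ that $\hat{\mathscr{G}}$ simply does not use (this is precisely the situation of Figure \ref{sing}). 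The comparison-map alternative founders on measure, not length: what a finger can yield is potential energy $\frac{1}{\epsilon}\int W(u^\epsilon)$, which is controlled by the \emph{area} of the region where $u^\epsilon$ is far from the zeros, and a connected set of diameter $c_0\epsilon^{1/6}$ need only carry, via the gradient bound \eqref{H1bound}, a tube of radius $O(\epsilon^{7/6})$ on which $\min_{a'}\vert u^\epsilon-a'\vert\gtrsim\epsilon^{1/6}$; this gives an energy excess of order $\epsilon^{-1}\cdot\epsilon^{1/3}\cdot\epsilon^{1/6}\cdot\epsilon^{7/6}=\epsilon^{2/3}$, which is swallowed by the error $C_1\epsilon^{1/3}$ already present in Proposition \ref{LB3} and produces no contradiction. (A secondary problem: at a point of $\{\vert u^\epsilon-a\vert>\delta_0\}$ where $u^\epsilon$ happens to be close to a \emph{different} zero $a'$, pushing $u^\epsilon$ radially toward $a$ increases $W$, so the ``energy saved'' can be negative on part of $D$.)

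The ingredient you are missing is the Caffarelli--Cordoba density estimate (\cite{CC}, in the vector form of Theorem 5.2 of \cite{afs}), and with it the paper argues differently. One assumes, for contradiction, that $\min_{a'\in A}\vert u^\epsilon(x_0)-a'\vert\ge\delta_0$ (distance to \emph{all} zeros, not just to $a$) at some $x_0\in S_{a,f}$ with $d(x_0,\mathscr{G}_f)\ge C_0\epsilon^{1/6}$. The gradient bound seeds a ball of radius $\sim\epsilon$ on which $\min_{a'}\vert u^\epsilon-a'\vert\ge\delta_0/2$, and the density estimate propagates this to $\vert B_{\epsilon r}(x_0)\cap\{\min_{a'}\vert u^\epsilon-a'\vert\ge\delta_0/2\}\vert\ge C^*(\epsilon r)^2$ for all $\epsilon r$ up to the distance to the boundary of the region $S_{a,f}(C_3)$ that the lower bound of Proposition \ref{LB3} does not see. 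Since $W\ge\tfrac12 c_W^2\delta_0^2$ there is bounded below by a \emph{constant}, taking $\epsilon r\sim\epsilon^{2/3}$ produces an unaccounted energy excess $2C_1\epsilon^{1/3}$, contradicting the matching of \eqref{Lbequ} with the upper bound of Lemma \ref{UBerr}. The conclusion is only that every such $x_0$ is $\delta_0$-close to \emph{some} zero $a_0$; the identification $a_0=a$ then comes from the connectedness of $S_{a,f}(C_0)$, the continuity of $u^\epsilon$, and the measure estimate \eqref{OmegaSaf} --- a step your proposal would also need. In short, without a density (area) lower bound for the set where $u^\epsilon$ stays away from $A$, the long finger cannot be excluded, and that is the content the cited estimate supplies.
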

\begin{proof}
From the proof of Proposition \ref{LB3} we see that the lower bound in \eqref{Lbequ} accounts only for the energy of $u^\epsilon$ in the set $\cup_{x\in\hat{\mathscr{G}}}F_x$. From this, $\vert F_x\vert\leq\epsilon^\frac{1}{3}$ and Proposition \ref{dhatg-gf} it follows that there is a constant $C_3>0$ such that the lower bound in Proposition \ref{LB3} does not consider the energy of $u^\epsilon$ in the set $S_{a,f}(C_3)$ where
\[S_{a,f}(c)=\{x\in S_{a,f}: d(x,\mathscr{G}_f)\geq c\epsilon^\frac{1}{6}\}.\]

Set $C_0=2C_3$ and assume that $x_0\in S_{a,f}$ satisfies $d(x,\mathscr{G}_f)\geq C_0\epsilon^\frac{1}{6}$ and
\[\min_{a^\prime\in A}\vert u^\epsilon(x_0)-a^\prime\vert\geq\delta_0.\]
Then, the bound \eqref{H1bound}  implies
\begin{equation}
\min_{a^\prime\in A}\vert u^\epsilon(x)-a^\prime\vert\geq\frac{\delta_0}{2},\;\;\text{for}\;\;x\in B_{\epsilon\frac{\delta_0}{2M^\prime}}(x_0).
\label{ContrHyp}
\end{equation}
Since $u^\epsilon$ is a minimizer of $J_\Omega^\epsilon(u)$, the map $v(y)=u^\epsilon(x_0+\epsilon y)$, $x_0+\epsilon y\in S_{a,f}(C_3)$ is a minimizer of $\int_{\Sigma_{a,f}(C)}(\frac{\vert\nabla v\vert^2}{2}+W(v))dy $, $\Sigma_{a,f}(C_3)=\{y: x_0+\epsilon y\in S_{a,f}(C_3)\}$ and \eqref{ContrHyp} implies
\[\min_{a^\prime\in A}\vert v(y)-a^\prime\vert\geq\frac{\delta_0}{2},\;\;\text{for}\;\;y\in B_{\frac{\delta_0}{2M^\prime}}(0).\]
From this and the density estimate derived by Caffarelli and Cordoba \cite{CC} that we use in the vector version as stated in Theorem 5.2 in \cite{afs} it follows
\[\vert B_r(0)\cap\{y:\min_{a^\prime\in A}\vert v(y)-a^\prime\vert\geq\frac{\delta_0}{2}\}\vert\geq C^*r^2,\]
as long as $B_r(0)\subset \Sigma_{a,f}(C_3)$. This estimate in terms of $u^\epsilon$ becomes
\begin{equation}
\vert B_{\epsilon r}(x_0)\cap\{x:\min_{a^\prime\in A}\vert u^\epsilon(x)-a^\prime\vert\geq\frac{\delta_0}{2}\}\vert\geq C^*(\epsilon r)^2,\;\;B_{\epsilon r}(x_0)\subset S_{a,f}(C_3).
\label{at-eps}
\end{equation}
This and \eqref{cW} imply
\begin{equation}
J^\epsilon_{ B_{\epsilon r}(x_0)}(u^\epsilon)\geq\frac{1}{\epsilon}\int_{ B_{\epsilon r}(x_0)}W(u^\epsilon)dx\geq \frac{1}{2}c_W\delta_0^2 C^*\epsilon r^2,\;\;B_{\epsilon r}(x_0)\subset S_{a,f}(C_3).
\label{JB}
\end{equation}
We fix $r=r_\epsilon$ by setting
\[\frac{1}{2}c_W\delta_0^2 C^*\epsilon r^2=2C_1\epsilon^\frac{1}{3},\]
where $C_1$ is the constant in Proposition \ref{LB3}. Then from  Proposition \ref{LB3} and \eqref{JB} we obtain
\[\begin{split}
&\epsilon r\leq C\epsilon^\frac{2}{3},\\
&J^\epsilon_{ B_{\epsilon r}(x_0)}\geq \mathscr{G}_f+2C_1\epsilon^\frac{1}{3}.
\end{split}\]
The first of these equation shows that, for small $\epsilon>0$ and $x_0\in S_{a,f}(C_0)$, $B_{\epsilon r}(x_0)\subset S_{a,f}(C_3)$. The second equation contradicts the upper bound \eqref{LB} where on the basis of Lemma \ref{UBerr} we can assume $e_\epsilon\leq C\epsilon\vert\ln{\epsilon}\vert^2$. This contradiction shows that,
for each $x_0\in S_{a,f}(C_0)$, there is $a_0\in A$ such that $\vert u^\epsilon(x_0)-a_0\vert<\delta_0$. the continuity of $u^\epsilon$ yields that $a_0$ is constant in $ S_{a,f}(C_0)$. This and \eqref{OmegaSaf} imply that actually
\[a_0=a,\;\;x\in S_{a,f}(C_0).\]
The proof is complete.
\end{proof}
Once the inequality in Proposition \ref{<d} is established the exponential estimate in Theorem \ref{fine} follows by linear elliptic theory, see for example Lemma 4.5 in \cite{afs}. This concludes the proof of Theorem \ref{fine}.
\subsection{Application of Theorem \ref{fine}.}
\label{app}
Theorem \ref{fine} says that, under suitable assumptions on $W$ and $v_0^\epsilon$ and provided condition $h_3$ is satisfied, the description of the fine structure of a minimizer $u^\epsilon$ of problem \eqref{min} reduces to the characterization of the free minimizer $\mathscr{G}_f\in\bar{\mathcal{G}}_f$ given by Corollary \ref{cor} or of the set $\mathscr{H}_f\subset\bar{\mathcal{G}}_f$ of such minimizers if $\mathscr{G}_f$ is not unique. Since, by assumption the arcs $I_a$, $a\in\tilde{A}$ are small in $\epsilon$, the determination of $\mathscr{G}_f$ is essentially a geometric problem.
We observe that the search of $\mathscr{G}_f$ is not exactly equivalent to the problem considered in $\cite{N}$ of determining the optimal partitioning of $\Omega$ in $N$ parts under prescribed boundary conditions.
Indeed there are a number of geometric properties which are basic for the construction of $\mathscr{G}_f$. First of all we recall from Theorem \ref{network} that  $\mathscr{G}_f$ has $n_b=2(N-1)-\tilde{N}$ branching points and is composed of $n_s=3(N-1)-\tilde{N}$ arcs where $N=\sharp A$ and $\tilde{N}=\sharp\tilde{A}$. For $N>2$ it may be also useful to consider the subnetwork $G_f\subset\mathscr{G}_f$ consisting of the arcs of $\mathscr{G}_f$ which have both extremes coinciding with branching points of $\mathscr{G}_f$. $G_f$ is connected and is composed of $\nu_s=3(N-1)-2\tilde{N}$ arcs and has the property that the set of the extreme of the arcs in $G_f$ coincides with the set of the $n_b$ branching points of  $\mathscr{G}_f$. Moreover if, as we assume, the sets $S_{a,f}$, $a\in A$, are connected, then
\begin{equation}
G_f=\cup_{a\in A\setminus\tilde{A}}\partial S_{a,f}.
\label{unotilde}
\end{equation}
Another important property of  $\mathscr{G}_f$ concerns its behavior in a neighborhood of an isolated branching point
 $p\in\Omega$. If $S_{a_i,f}$, $i=1,2,3$ are the set of the decomposition of $\bar{\Omega}$ defined by  $\mathscr{G}_f$ that meet at $p$ and $\gamma_{a_ia_j}$ is the arc that originates in $p$ and lies on the common boundary of
  $S_{a_i,f}$ and  $S_{a_j,f}$, then
\begin{equation}
\begin{split}
&\sigma_{a_1a_2}\tau_{a_1a_2}+\sigma_{a_2a_3}\tau_{a_2a_3}+\sigma_{a_3a_1}\tau_{a_3a_1}=0,\\
&\Rightarrow\quad\;\;\frac{\sigma_{a_1a_2}}{\sin{\alpha_3}}=\frac{\sigma_{a_2a_3}}{\sin{\alpha_1}}=\frac{\sigma_{a_3a_1}}{\sin{\alpha_2}}.
\end{split}
\label{eq-sig}
\end{equation}
where $\tau_{a_ia_j}$ is the unit tangent to  $\gamma_{a_ia_j}$ at $p$ and $\alpha_1$ is the angle between $\tau_{a_3a_1}$ and $\tau_{a_1a_2}$, $\alpha_2$ is the angle between $\tau_{a_1a_2}$ and $\tau_{a_2a_3}$, $\alpha_3$ is the angle between $\tau_{a_2a_3}$ and $\tau_{a_3a_1}$.

Finally we mention that, if $\gamma\in\mathscr{G}_f$ is an arc with extremes $p,p^\prime$ and $\mathrm{sg}(p,p^\prime)\subset\Omega$, then $\gamma$ coincides with $\mathrm{sg}(p,p^\prime)$. In particular, if $\Omega$ is convex, all the arcs in $\mathscr{G}_f$ with positive measure coincide with segments.

Our first application of Theorem \ref{fine} concerns the case where there exists $\sigma>0$ such that
\begin{equation}
\sigma_{aa^\prime}=\sigma,\;\;a,a^\prime\in A,\;a\neq a^\prime.
\label{sigma=}
\end{equation}
An example of potential that satisfies \eqref{sigma=} and has the property that any $a, a^\prime\in A$, $a\neq a^\prime$ are connected is given by
\[W(z)=\Pi_{a\in A}\vert z-a\vert^2,\;\;z\in\R^m,\;m\geq\sharp A-1,\]
where $A$ is the set of the vertices of a regular hypertetrahedron.
Note that from \eqref{sigma=} and \eqref{eq-sig} it follows
\begin{equation}
\alpha_i=\frac{2}{3}\pi,\;\; i=1,2,3.
\label{alpha=}
\end{equation}
\begin{proposition}
\label{only-boun}
Assume \eqref{sigma=} and, for $a\in A$, let $\mathscr{C}_a$ be the circuit defined in Section \ref{optimal}. Then
\[\vert\mathscr{C}_a\vert=0,\;\;\text{for}\;a\in A\setminus\tilde{A}.\] In particular $S_{a,f}$ has zero measure for each $a\in A\setminus\tilde{A}$.
\end{proposition}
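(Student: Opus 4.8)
The plan is to reduce the statement to a pure length-minimization problem and then argue that an interior cell would have to be a hexagon with $120^\circ$ angles, which can always be squeezed out. Since $\sigma_{aa'}=\sigma$ for every pair, $\mathscr{F}(\mathscr{G})=\sigma\sum_{\gamma\in\mathscr{G}}|\gamma|$, so the optimal networks of Proposition \ref{opt-G} and Corollary \ref{cor} are precisely the minimizers of total length in their respective classes; I denote such a minimizer $\mathscr{G}_f$ and write $\mathscr{C}_a$, $S_{a,f}$ for the associated circuits and cells. Suppose, for contradiction, that $|\mathscr{C}_a|>0$ for some $a\in A\setminus\tilde{A}$. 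Since $a\notin\tilde{A}$ one has $\Gamma_a=\emptyset$, and then properties (i)$_f$--(vi)$_f$ force $S_{a,f}$ to be a cell which does not reach $\partial\Omega$: $\bar{S}_{a,f}\subset\Omega$, and $\mathscr{C}_a=\partial S_{a,f}$ is a Jordan curve lying in the interior of $\Omega$.

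Next I would show that $\mathscr{C}_a$ is a convex hexagon. Each arc composing $\mathscr{C}_a$ joins two branching points and lies in the interior of $\Omega$; perturbing it inside a tube contained in $\Omega$ keeps the network admissible, so by minimality the arc is a local length-minimizer with free endpoints and hence a straight segment (this is the interior straightness recalled at the end of Section \ref{app}). At each vertex of $\mathscr{C}_a$ two of the three incident arcs bound $S_{a,f}$, and by \eqref{eq-sig}--\eqref{alpha=} the three interfacial angles are all equal to $\tfrac{2}{3}\pi$, so the interior angle of $S_{a,f}$ there equals $\tfrac{2}{3}\pi<\pi$. Thus $\mathscr{C}_a$ is a convex polygon all of whose interior angles are $\tfrac{2}{3}\pi$, and the angle-sum identity $(k-2)\pi=k\cdot\tfrac{2}{3}\pi$ forces $k=6$; the only other Jordan-curve possibility, two arcs joining two vertices, is incompatible with the $\tfrac{2}{3}\pi$ incidence at the vertices.

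The contradiction then comes from eliminating the hexagonal cell, and this is the step I expect to be the main obstacle. Since $\bar{\mathcal{G}}_f$ is closed under degeneration of arcs and coincidence of branching points, one deforms $\mathscr{G}_f$ inside $\bar{\mathcal{G}}_f$ so that the hexagonal cell $S_{a,f}$ collapses, the six walls bounding it shrinking to a single point $p$ while the six arcs issuing from its former vertices are prolonged to $p$; using the $\tfrac{2}{3}\pi$ incidence one checks that the collapse can be arranged so that $\mathscr{F}$ does not increase. In the limiting network six arcs meet at $p$ with consecutive angles $\tfrac{\pi}{3}$, and this is never length-minimizing for equal tensions: splitting $p$ into two triple points, each carrying three consecutive arcs and joined by a short segment, strictly decreases the length (the sum of three unit vectors at mutual angle $\tfrac{\pi}{3}$ has norm $2>1$, so the classical Steiner computation applies), and the resulting network still lies in $\bar{\mathcal{G}}_f$. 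This contradicts the minimality of $\mathscr{G}_f$, so $|\mathscr{C}_a|=0$. Finally $S_{a,f}$ is contained in the region bounded by the null set $\mathscr{C}_a$, hence reduces to a point (or is empty), whence $|S_{a,f}|=0$.

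The delicate points, and where I expect the real work to lie, are all in the third step. First, the collapse must be chosen adapted to the (in general non-regular) hexagon: a careless homothetic collapse toward the centre of the cell can actually \emph{raise} the total length, so one needs a collapse tailored to the positions of the six arcs leaving the cell. Second, one must verify along every modification that the combinatorial type of $\mathscr{G}_f$ is preserved, i.e.\ that the number $N$ of cells and the number $n_b=2(N-1)-\tilde{N}$ of branching points (counted with multiplicity in the closure $\bar{\mathcal{G}}_f$) do not change, so that the competitor remains in $\bar{\mathcal{G}}_f$; this is the point at which the bookkeeping for minimal planar clusters must be invoked.
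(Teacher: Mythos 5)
Your route is genuinely different from the paper's, and its decisive step is left unproved. You reduce to total length, classify a putative interior cell as a convex hexagon with $\frac{2}{3}\pi$ angles, and then try to collapse it; but the collapse is exactly where the argument lives, and ``one checks that the collapse can be arranged so that $\mathscr{F}$ does not increase'' is not a proof. Note the structural obstruction: with straight edges the total length is a \emph{convex} function of the branching-point positions, and the $\frac{2}{3}\pi$ balance condition makes your hexagonal configuration a critical point, hence a global minimum within its combinatorial class. So no motion of the vertices can strictly decrease $\mathscr{F}$; the best available is a neutral deformation. There is one (translate each wall of the cell inward by $t$: the perimeter loss $4\sqrt{3}\,t$ exactly cancels the lengthening $6\cdot\frac{2t}{\sqrt{3}}$ of the six outgoing arcs, each prolonged along its own direction), but for a non-regular hexagon it terminates when the shortest wall degenerates, producing a degree-$4$ vertex --- not the single point with six incident arcs that your Steiner computation addresses. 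One must then split that degree-$4$ vertex (possible, since some consecutive pair of its four arcs meets at angle $\le\frac{\pi}{2}<\frac{2}{3}\pi$) while re-using one of the already-degenerate zero-length arcs so as not to change $n_s$ and $n_b$; none of this bookkeeping is done. A further unaddressed point: the hexagon classification presupposes that every vertex of $\mathscr{C}_a$ is an isolated nondegenerate triple point, which is not guaranteed for a minimizer taken in the closure $\bar{\mathcal{G}}_f$, where branching points may coincide and arcs may have zero length.

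The paper avoids all of this with a relabelling trick that uses \eqref{sigma=} directly. Pick a single wall $\gamma\subset\mathscr{C}_a$ of length $\ell>0$ separating $S_{a,f}$ from some $S_{a',f}$, with endpoints $p_1,p_2$. Delete $\gamma$, fence off a tiny replacement cell $\tilde{S}_a=S_{a,f}\cap B_r(p_1)$ using an arc of $\partial B_r(p_1)$ of length $O(r)$ together with pieces of the arcs already issuing from $p_1$, and relabel the remainder of $S_{a,f}$ as phase $a'$. Because all surface tensions are equal, the relabelling costs nothing on the other walls of $S_{a,f}$, the competitor has the same numbers of arcs and branching points, and
\[
\mathscr{F}(\tilde{\mathscr{G}})-\mathscr{F}(\mathscr{G}_f)=\sigma\bigl(O(r)-\ell\bigr)<0
\]
for $r$ small, contradicting minimality. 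No straightness, no angle condition, no collapse, and no Steiner analysis are needed. If you want to salvage your approach, the honest version is: (a) rule out degenerate vertices on $\mathscr{C}_a$ by local splittings; (b) carry out the inward-parallel neutral deformation until a wall vanishes; (c) perform the degree-$4$ splitting with explicit combinatorial accounting in $\bar{\mathcal{G}}_f$. That is considerably more work than the two-line comparison argument above.
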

\begin{proof}
Assume $\vert\mathscr{C}_a\vert>0$ for some $a\in A\setminus\tilde{A}$. Then there exists an arc $\gamma\in\mathscr{G}_f\cap\mathscr{C}_a$ with length $\ell>0$. Let $a^\prime\in A$ be determined by the condition that
\begin{equation}
\gamma\cap\partial S_{a,f}\cap\partial S_{a^\prime,f}\neq\emptyset.
\label{non-uniq}
\end{equation}and let $p_i$, $i=1,2$ be the extremes of $\gamma$. We assume that $\gamma$ is the unique arc satisfying these properties and that, if $r>0$ is sufficiently small, then $\partial (B_r(p_1)\cap S_{a,f})$ is a Jordan curve  composed by $\gamma\cap B_r(p_1)$, $\partial B_r (p_1)
\cap S_{a,f}$ and $\gamma_1\cap B_r(p_1)$ where $\gamma_1\in\mathscr{G}_f$ is the arc originating in $p_1$. More general situation like, for example the case where two or more arcs satisfy \eqref{non-uniq} with the same $a^\prime\in A$ can be discussing via similar arguments.
Let $q\in\gamma$ and $q_1\in\gamma_1$ be the extremes of the arc $\partial B_r (p_1)
\cap S_{a,f}$ and let $\tilde{\gamma}\subset\gamma$ the arc with extremes $q$ and $p_2$. We define a network $\tilde{\mathscr{G}}\in\bar{\mathcal{G}}_f$ with corresponding decomposition $\tilde{S}_a$, $a\in A$ as follows. $\tilde{\mathscr{G}}$ is obtained from $\mathscr{G}_f$ by removing the arc $\gamma$ and by adding the arc $\eta$ union of $\gamma_1\cap B_r(p_1)$ and $\partial B_r(p_1)\cap S_{a,f}$. Moreover we set $\tilde{S}_a=S_{a,f}\cap B_r(p_1)$, $\tilde{S}_{a^\prime}=(S_{a,f}\cup S_{a^\prime,f}\cup\tilde{\gamma})\setminus\tilde{S}_a$. We do not change the set $S_{a^{\prime\prime},f}$ for $a^{\prime\prime}\neq a, a^\prime$. By construction $\tilde{\mathscr{G}}$ and $\mathscr{G}_f$ have the same number of arcs and of branching points. Moreover, if $r>0$ is sufficiently small, it results
\[\mathscr{F}(\tilde{\mathscr{G}})-\mathscr{F}(\mathscr{G}_f)=\sigma(\vert\eta\vert-\ell)<0,\]
in contradiction with the minimality of $\mathscr{G}_f$. This concludes the proof.
\end{proof}

In view of Proposition \ref{only-boun} we assume
\[\tilde{N}=N.\]
It follows $n_b=N-2$, $n_s=2N-3$ and $\nu_s=N-3$. In the table that follows we list the values of $n_n,n_s$ and $\nu_s$ for $N=2,\ldots, 6$
\vskip.3cm

\begin{tabular}{c|c|c|c|c|c|}
N & 2 & 3 & 4 & 5 & 6\\
\hline
$n_b$ & 0 & 1 & 2& 3 & 4 \\
\hline
$n_s$ & 1 & 3 & 5& 7 & 9\\
\hline
$\nu_s$ &  & 0 & 1& 2 & 3
\end{tabular}
\vskip.3cm
We also assume that $\Omega$ is convex and that the boundary datum $v_0^\epsilon$ divides $\partial\Omega$ in $N$ arcs determined by the vertices $q_i\in\partial\Omega$, $i=1,\ldots,N$ of a  closed  polygon $\mathcal{P}_N$ with $N$ sides. Here we treat the small arcs $I_a$, $a\in A$, as points and identify the points $q_i$ with the end points of $\mathscr{G}_f$.

Note that, if $\eta\subset\mathscr{G}_f$ is an arc, not necessarily the union of some of the $n_s$ arcs of $\mathscr{G}_f$, with both extremes $q,q^\prime$ on the same side of $\mathcal{P}_N$ and $\eta\setminus\mathcal{P}_N$ is nonempty, then the length of the segment $\mathrm{sg}[q,q^\prime]$ is less than $\vert\eta\vert$. This implies
\begin{equation}
\mathscr{G}_f\subset\mathcal{P}_N.
\label{inP}
\end{equation}
 We also observe that the polygonal $\mathrm{Pol}_N$ obtained by removing one of the sides of $\mathcal{P}_N$, say $\mathrm{sg}(q_1,q_2)$, from $\partial\mathcal{P}_N$ can be interpreted as an element of $\bar{\mathcal{G}}_f$. Indeed the $n_b=N-2$ branching points can be identified with the vertices of $\mathrm{Pol}_N$ different from $q_1$ and $q_2$, $N-1$ of the $n_s=2N-3$ arcs coincide with the sides of $\mathrm{Pol}_N$. The remaining $N-2$ arcs have zero length and are the arcs that connect each branching point with the corresponding vertex of $\mathrm{Pol}_N$. Note that this implies the upper bound
\begin{equation}
\mathscr{F}(\mathscr{G}_f)\leq\sigma\pi D,
\label{UBF}
\end{equation}
where $D$ is the diameter of $\Omega$.

Let $q_i$ be one of the branching points of  $\mathrm{Pol}_N$ and denote $\theta_i$ the angle between the sides of   $\mathrm{Pol}_N$ that join at $q_i$. Let $\mathscr{G}^\prime\in\bar{\mathcal{G}}_f$ be the network obtained from  $\mathrm{Pol}_N$ via a small displacement of $q_i$ into a branching point $q_i^\prime\in\mathcal{P}_N\setminus\{q_i\}$. A classical argument shows that
\[\begin{split}
& \theta_i\geq\frac{2}{3}\pi\quad\Rightarrow\quad\mathscr{F}(\mathscr{G}^\prime)>\mathscr{F}(\mathrm{Pol}_N),
\:\:\forall\;q_i^\prime,\\
& \theta_i<\frac{2}{3}\pi\quad\Rightarrow\quad\exists\, q_i^\prime\;\text{such that}\;\;\mathscr{F}(\mathscr{G}^\prime)<\mathscr{F}(\mathrm{Pol}_N).
\end{split}\]
This and similar arguments show that
\begin{equation}
\min\theta_i\geq\frac{2}{3}\pi,
\label{minphi}
\end{equation}
implies that $\mathrm{Pol}_N$ is a nondegenerate local minimizer of $\mathscr{F}$ on $\bar{\mathcal{G}}_f$.

Now we further restrict $\Omega$ to be a ball $B$ and the boundary datum to be such that $\mathcal{P}_N$ is a regular polygon. In this case it results  $\theta_i=\phi_N=(1-\frac{2}{N})\pi$ and \eqref{minphi} is satisfied for only for $N\geq 6$ and the above discussion implies that, for $N=3,4,5$ the branching points belong to the interior of $\mathcal{P}_N$. In a similar way we see that the branching points are also isolated. From these observations and the fact that $G_f$ has, for $N=3,4,5$, $\nu_s=0,1,2$ segments respectively.

 On the basis of this information and using the fact that the network $G_f$ has a simple well determined structure: $G_f$ consists of $\nu_s=0,1,2$ segments for $N=3,4,5$ we conclude that the shape of  $\mathscr{G}_f$ is uniquely determined as shown in Figure \ref{345} where $G_f$ is highlighted in black.

\begin{figure}
  \begin{center}
\begin{tikzpicture}[scale=2]
\draw[thin] (0,0) circle [radius=1];
\draw[thin] (3,0) circle [radius=1];
\draw[thin] (-3,0) circle [radius=1];
\draw[blue] (-3,0)--(-2.13397,-.5);
\draw[blue] (-3,0)--(-3.866,-.5);
\draw[blue] (-3,0)--(-3,1);
\path[fill=red] (-3,0) circle [radius=.03];
\draw[blue] (0,0)--(0,.29886);
\draw[blue] (0,0)--(0,-.29886);
\draw[blue] (0,.29886)--(.707107,.707107);
\draw[blue] (0,.29886)--(-.707107,.707107);
\draw[blue] (0,-.29886)--(.707107,-.707107);
\draw[blue] (0,-.29886)--(-.707107,-.707107);
\path[fill=red] (0,.29886) circle [radius=.03];
\path[fill=red] (0,-.29886) circle [radius=.03];
\draw[blue] (3.5878,-.809)--(3.5878,.0993);
\draw[blue] (3.5878,.0993)--(3.951,.309);
\draw[blue] (2.4122,-.809)--(2.4122,.0993);
\draw[blue] (2.4122,.0993)--(2.049,.309);
\draw[blue] (3.5878,.0993)--(3,.438666);
\draw[blue] (2.4122,.0993)--(3,.438666);
\draw[blue] (3,.438666)--(3,1);
\path[fill=red] (3.5878,.0993) circle [radius=.03];
\path[fill=red] (3,.438666) circle [radius=.03];
\path[fill=red] (2.4122,.0993) circle [radius=.03];
\node[below] at (-3,-1) {$N=3$};
\node[below] at (0,-1) {$N=4$};
\node[below] at (3,-1) {$N=5$};
\node[right] at (.165,0) {$G_f$};
\node[below] at (3,.150666) {$G_f$};
\draw[->] (.15,0)--(.02,0);
\draw[->] (3,.150666)--(3.25,.238666);
\draw[->] (3,.150666)--(2.75,.238666);
\end{tikzpicture}
\end{center}
\caption{The minimizer $\mathscr{G}_f$ for $N=3,4,5$ and the network $G_f$ for $N=4,5$.
The red spots correspond to the branching points. }
\label{345}
\end{figure}
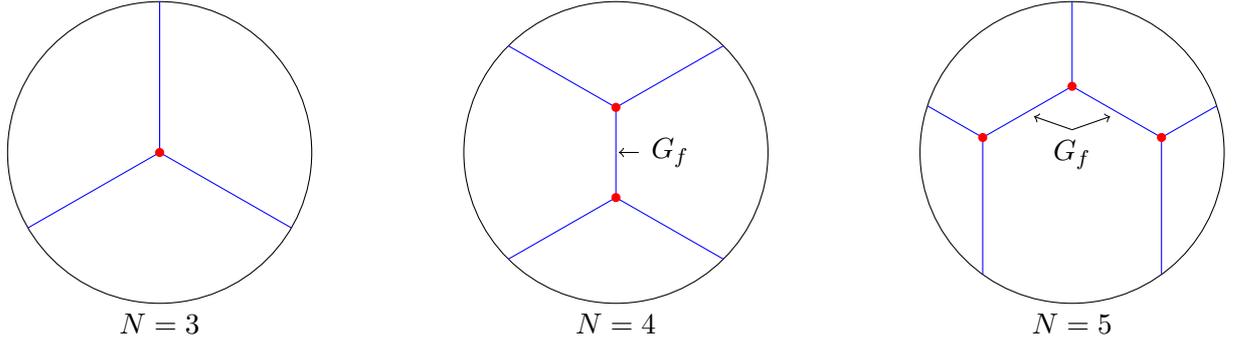
We observe that, for $N=3$, $\mathscr{G}_f$ is unique, for $N=4$, there are two minimizers, the one shown in Figure \ref{345} and the one obtained by a rotation of $\frac{\pi}{2}$  of $\mathscr{G}_f$ around the center of $B$, for $N=5$ there are five minimizers obtained from  $\mathscr{G}_f$ by successive rotations of $\frac{2\pi}{5}$.

For $N=6$, we have $\theta_i=\phi_6=\frac{2}{3}\pi$. We claim that this implies that either the four branching points of  $\mathscr{G}_f$ lie in the interior of $\mathcal{P}_6$ or all coincide with vertices of  $\mathcal{P}_6$ and we have the polygonal $\mathrm{Pol}_6$. Indeed, if this in not the case, there are two branching points $q$ and $p$, extremes of the segment $\mathrm{sg}[q,p]$, and such that $q$ is one of the vertices of $\mathcal{P}_6$ while $p$ is in the interior of  $\mathcal{P}_6$. It follows that the angle $\phi$ between  $\mathrm{sg}[q,p]$ and any other segment that connects $q$ to some $p_1\in\mathcal{P}_6\setminus\{q\}$ is $<\frac{2}{3}\pi$. This, as discussed above, implies the existence of a small perturbation which reduces $\mathscr{F}$ and proves the claim.

The network $G_f$ consists of $\nu_6=3$ segments. Hence for $G_f$ there are two possible configurations: either the three segments have an extreme in common or form a polygonal with three sides. This and the above claim imply that there are exactly three well determined geometric structures that candidate to be a minimizer $\mathscr{G}_f$.
Namely the polygonal $\mathrm{Pol}_6$ and the two structure that  correspond to the two possible configuration of $G_f$ and have all the branching points in the interior of $\mathcal{P}_6$, see Figure \ref{N6}.
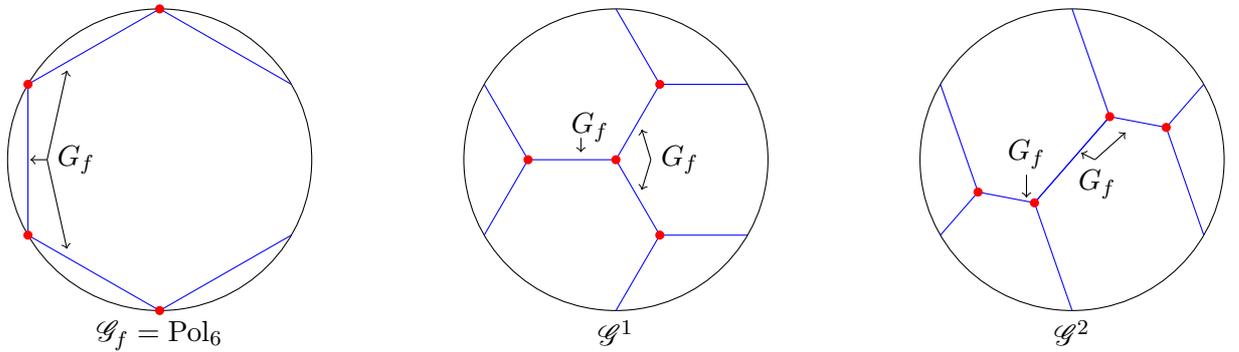
\begin{figure}
  \begin{center}
\begin{tikzpicture}[scale=2]
\draw[thin] (0,0) circle [radius=1];
\draw[thin] (3,0) circle [radius=1];
\draw[thin] (-3,0) circle [radius=1];
\draw[blue] (.28867,.5)--(0,0);
\draw[blue] (.28867,.5)--(0,1);
\draw[blue] (.28867,.5)--(.866,.5);
\draw[blue] (.28867,-.5)--(0,0);
\draw[blue] (.28867,-.5)--(0,-1);
\draw[blue] (.28867,-.5)--(.866,-.5);
\draw[blue] (-.57735,0)--(0,0);
\draw[blue] (-.57735,0)--(-.866,.5);
\draw[blue] (-.57735,0)--(-.866,-.5);
\draw[blue] (3.247436,.285714)--(2.752564,-.285714);
\draw[blue] (3.247436,.285714)--(3,0);
\draw[blue] (3.247436,.285714)--(3,1);
\draw[blue] (2.752564,-.285714)--(3,0);
\draw[blue] (2.752564,-.285714)--(3,-1);
\draw[blue] (3.247436,.285714)--(3.6185894,.214286);
\draw[blue] (3.6185894,.214286)--(3.866,.5);
\draw[blue] (3.6185894,.214286)--(3.866,-.5);
\draw[blue] (2.752564,-.285714)--(2.381406,-.214286);
\draw[blue] (2.38141,-.214286)--(2.134,-.5);
\draw[blue] (2.38141,-.214286)--(2.134,.5);
\draw[blue] (-3.866,.5)--(-3,1);
\draw[blue] (-3.866,-.5)--(-3,-1);
\draw[blue] (-3.866,-.5)--(-3.866,.5);
\draw[blue] (-2.134,-.5)--(-3,-1);
\draw[blue] (-2.134,.5)--(-3,1);
\node[below] at (-3,-1) {$\mathscr{G}_f=\mathrm{Pol}_6$};
\node[below] at (0,-1) {$\mathscr{G}^1$};
\node[below] at (3,-1) {$\mathscr{G}^2$};
\draw[->] (-3.74,0)--(-3.85,0);
\draw[->] (-3.74,0)--(-3.61,.59);
\draw[->] (-3.74,0)--(-3.61,-.59);
\node[right] at (-3.74,0) {$G_f$};
\draw[->] (.23,0)--(.17,.2);
\draw[->] (.23,0)--(.17,-.2);
\draw[->] (-.23,.145)--(-.23,.05);
\node[right] at (.23,0) {$G_f$};
\node[above] at  (-.17,.05) {$G_f$};
\draw[->] (3.15,0)--(3.06,.045);
\draw[->] (3.15,0)--(3.35,.18);
\draw[->] (2.7,-.1)--(2.7,-.25);
\node[below] at (3.165,0) {$G_f$};
\node[above] at  (2.7,-.125) {$G_f$};
\path[fill=red] (.28867,.5) circle [radius=.03];
\path[fill=red] (0,0) circle [radius=.03];
\path[fill=red] (.28867,-.5) circle [radius=.03];
\path[fill=red] (-.57735,0) circle [radius=.03];
\path[fill=red] (3.247436,.285714) circle [radius=.03];
\path[fill=red] (2.752564,-.285714) circle [radius=.03];
\path[fill=red] (3.6185894,.214286) circle [radius=.03];
\path[fill=red] (2.38141,-.214286) circle [radius=.03];
\path[fill=red] (-3,1) circle [radius=.03];
\path[fill=red] (-3,-1) circle [radius=.03];
\path[fill=red] (-3.866,.5) circle [radius=.03];
\path[fill=red] (-3.866,-.5) circle [radius=.03];
\end{tikzpicture}
\end{center}
\caption{The minimizer $\mathscr{G}_f=\mathrm{Pol}_6$, $\mathscr{G}^i$, $i=1,2$ and the set $G_f$.
The red spots correspond to the branching points. }
\label{N6}
\end{figure}

If $R$ is the radius of $B$, then we have $\mathscr{F}(\mathrm{Pol}_6)=5\sigma R$ while, if we let $\mathscr{G}^1$ and $\mathscr{G}^2$ the networks associated to the two different structures of $G_f$, with $\mathscr{G}^2$ the one that corresponds to the case where $G_f$ is a polygonal, it results
\[5\sigma R<\mathscr{F}(\mathscr{G}^1)<\mathscr{F}(\mathscr{G}^1),\]
and we concludes that $\mathscr{G}_f=\mathrm{Pol}_6$ and the same is true for all the six polygonal obtained by successive rotation of $\frac{\pi}{3}$.
\vskip.2cm
For $N>6$ the number of possible configurations of $G_f$ and the number of networks candidate to be a minimizer $\mathscr{G}_f$ given in Corollary \ref{cor} rapidly increases and the analysis becomes more and more involved and it is outside the scope of this paper. However the discussion of the case $N=6$ suggests the conjecture that $N=6$ is a critical value and that for $N\geq 6$ it results $\mathscr{G}_f=\mathrm{Pol}_N$.
\vskip.2cm
Our next application of Theorem \ref{fine} is to show that the structure of a minimizer $u^\epsilon$ of problem \eqref{min} inside the domain may depend strongly from the values of the surface tensions $\sigma_{aa^\prime}$. We show that $u^\epsilon$ may exhibit, in the interior of $\Omega$, one or more phases different from the ones imposed on $\partial\Omega$ by the boundary datum. We assume that $\Omega=B$ is a ball and that the potential $W$ has $N\geq 4$ zeros and is invariant under the symmetry group of the equilateral triangle. Moreover we assume that the boundary datum divides $\partial B$ in three equal arcs determined by the vertices of $\mathcal{P}_3$.

We start with the case $N=4$ where the symmetry of $W$ implies that $A=\{0,a,b,c\}$ where $a\neq 0$, $b=ra$, $c=r^2 a$ with $r$ the rotation of $\frac{2}{3}\pi$ around the origin (see Figure \ref{W4}).
\begin{figure}
  \begin{center}
\begin{tikzpicture}[scale=1]
\draw[] (-6,0) circle [radius=2];
\draw[] (0,0) circle [radius=2];
\draw[] (6,0) circle [radius=2];
\draw[fill=black,black] (-6,6) circle [radius=.045];
\draw[fill=black,black] (-6,8) circle [radius=.045];
\draw[fill=black,black] (-7.73205,5) circle [radius=.045];
\draw[fill=black,black] (-4.26795,5) circle [radius=.045];
\draw[] (-6,6)--(-6,8);
\draw[] (-6,6)--(-7.73205,5);
\draw[] (-6,6)--(-4.26795,5);
\draw[] (-7.73205,5) to [out=45,in=255] (-6,8);
\draw[] (-4.26795,5) to [out=165,in=15]  (-7.73205,5);
\draw[] (-6,8) to [out=285,in=135]  (-4.26795,5);
\draw[blue] (-6,1)--(-6,2);
\draw[blue] (-6,1)--(-6.866,-.5);
\draw[blue] (-6,1)--(-5.134,-.5);
\draw[blue] (-5.134,-.5)--(-6.866,-.5);
\draw[blue] (-7.73205,-1)--(-6.866,-.5);
\draw[blue] (-4.26795,-1)--(-5.134,-.5);
\draw[fill=red,red] (-6,1) circle [radius=.045];
\draw[fill=red,red] (-6.866,-.5) circle [radius=.045];
\draw[fill=red,red] (-5.134,-.5) circle [radius=.045];
\draw[blue] (0,0)--(0,2);
\draw[blue] (0,0)--(1.73205,-1);
\draw[blue] (0,0)--(-1.73205,-1);
\draw[fill=red,red] (0,0) circle [radius=.045];
\draw[blue] (6,2)--(4.26795,-1);
\draw[blue] (7.73205,-1)--(4.26795,-1);
\draw[blue] (7.73205,-1)--(6,2);
\draw[fill=red,red] (6,2) circle [radius=.045];
\draw[fill=red,red] (7.73205,-1) circle [radius=.045];
\draw[fill=red,red] (4.26795,-1) circle [radius=.045];
\node[] at (1.1,.5) {$a$};
\node[] at (-1.1,.5) {$b$};
\node[] at (0,-1.3) {$c$};
\node[] at (6,0) {$0$};
\node[] at (7.1,.5) {$a$};
\node[] at (4.9,.5) {$b$};
\node[] at (6,-1.3) {$c$};
\node[] at (-4.9,.5) {$a$};
\node[] at (-7.1,.5) {$b$};
\node[] at (-6,-1.3) {$c$};
\node[] at (-6,0) {$0$};
\node[right] at (-4.26795,5) {$a$};
\node[left] at (-7.73205,5) {$c$};
\node[above] at (-6,8) {$b$};
\node[below] at (-6,6) {$0$};
\node[below] at (-5.134,6.5) {$\sigma$};
\node[] at (-6.25,6.5) {$\sigma_0$};
\node[below] at (0,-2) {$\mathscr{G}_f^0,\;\sigma\in(0,\sqrt{3}\sigma_0)$};
\node[below] at (-6,-2) {$\mathscr{G}_f^\ell,\;\ell\in(0,R),\;\sigma=\sqrt{3}\sigma_0$};
\node[below] at (6,-2) {$\mathscr{G}_f^R,\;\sigma\in(\sqrt{3}\sigma_0,2\sigma_0)$};
\end{tikzpicture}
\end{center}
\caption{The potential $W$ with the four zeros and the connections. The minimizers $\mathscr{G}_f^\ell$, $\ell\in(0,R)$,  $\mathscr{G}_f^0$, $\sigma\in(0,\sqrt{3}\sigma_0)$, $\mathscr{G}_f^R$, $\sigma\in(\sqrt{3}\sigma_0,2\sigma_0)$.
}
\label{W4}
\end{figure}

We assume that $a,b$ and $c$ are the phases that appear on $\partial B$. General results, see for example Theorem 2.1 in \cite{afs},\cite{Rab},\cite{MS}, and the symmetry of $W$ imply that $0$ connects to $a,b,c$ with a common value $\sigma_0$ for the energy of these connections. As required for the validity of Theorem \ref{fine} we also assume that $a$ is connected to $b$ and $c$ and that $\sigma<2\sigma_0$, where
\begin{equation}
\sigma=\sigma_{ab}=\sigma_{bc}=\sigma_{ca}.
\label{someSig=}
\end{equation}
We first consider the possibility that $\mathscr{G}_f\in\mathcal{G}_f$ and then the case where  $\mathscr{G}_f\in\partial\mathcal{G}_f$. If $\mathscr{G}_f$ is in the interior of $\bar{\mathcal{G}}_f$, then is composed of $n_s=6$ arcs, actually segments, of positive length with $n_b=3$ branching points of triple junction type that lie in the interior of $B$. The set $G_f$ has $\nu_s=3$ sides and, since the set of extreme points of $G_f$ coincides with the set of the branching points of $\mathscr{G}_f$, has $3$ extreme points. It follows that  $G_f$ is the boundary of a triangle which, in view of \eqref{unotilde} is necessarily associated to the phase $0$. If $p$ is one of the vertices of $G_f$, there is a segment $\mathrm{sg}[p,q]\in\mathscr{G}_f$ where $q\in\partial B$ is one of the end points of $\mathscr{G}_f$. From \eqref{eq-sig} and \eqref{someSig=} it follows that  $\mathrm{sg}[p,q]$ forms equal angles with the two sides of $G_f$ that join at $p$. Let $\theta$ be the value of these angles and let $\phi=2\pi-2\theta$ be the angle between the two sides of $G_f$ that join at $p$. Since the same argument applies to all three vertices of $G_f$ we conclude that $G_f$ is the boundary of an equilateral triangle with center at the centre of $B$ and vertices on the radii that join the end points with the center of $B$. Moreover we have $\phi=\frac{1}{3}\pi$, $\theta=\frac{5}{6}\pi$ and  \eqref{eq-sig} implies
\begin{equation}
\sigma=\sqrt{3}\sigma_0.
\label{sigsig0}
\end{equation}
Under the previous assumptions on $W$ and on the boundary datum, this condition is necessary and sufficient for the existence of a minimizer $\mathscr{G}_f$ with the three branching points in the interior of $B$.
If $\ell\in(0,R)$ is the distance of the vertices of $G_f$ from the centre of $B$, we have from \eqref{sigsig0}
\[\mathscr{F}(\mathscr{G}_f)=3\sqrt{3}\ell\sigma_0+3(R-\ell)\sigma=3R\sigma.\]
That is: $\mathscr{F}(\mathscr{G}_f)$ is independent of $\ell\in(0,R)$ and we have a one-parameter family of minimizers.
 It follows that $\mathscr{G}_f$ is not isolated and Theorem \ref{fine} does not apply if  \eqref{sigsig0} holds. Hence we examine under what conditions there exists a minimizer  $\mathscr{G}_f\in\partial\mathcal{G}_f$. We first consider the configurations $\mathscr{G}_f^0$ and of $\mathscr{G}_f^R$ of $\mathscr{G}_f$ corresponding to $\ell=0$ and $\ell=R$. In the first case $G_f$ reduces to a point $O$, the center of $B$. $O$ coincides with the three branching points and with the three sides of $G_f$ of zero length. A local analysis around $O$ shows that $\sigma\in(0,\sqrt{3}\sigma_0)$ implies that $\mathscr{G}_f^0$ is an isolated minimizer that satisfies \eqref{GIn}. For  $\mathscr{G}_f^R$ the vertices of $G_f$ , the branching points, coincide with the end points and the three arcs with one extreme in a branching point and the other in an end point have zero length. A local analysis around the branching points shows that $\sigma\in(\sqrt{3}\sigma_0,2\sigma_0)$ implies that $\mathscr{G}_f^R$ is an isolated minimizer that satisfies \eqref{GIn}.
 It remains to consider the case where one or two of the vertices of $G_f$ coincide with the vertices of $\mathcal{P}_3$. Assume that two vertices $p_1$ and $p_2$ coincide with the vertices $q_1$ and $q_2$ of $\mathcal{P}_3$ and let $p$ and $q$, $p\neq q$, the third vertex of $G_f$ and the third vertex of $\mathcal{P}_3$ respectively. As before, from \eqref{eq-sig} and \eqref{someSig=}, we have that the angles determined by $\mathrm{sg}[p,q]$ and $\mathrm{sg}[p,q_i]$, $i=1,2$ are equal. Hence the triangle $p,p_1,p_2$ has the angles in $p_1$ and $p_2$ equal and thus is an isosceles triangle. If $\phi\in(\frac{1}{3}\pi,\pi)$ is the angle in $p$, the value $\psi$ of the angles in $p_1$ and $p_2$ is $\psi=\frac{1}{2}(\pi-\phi)<\frac{1}{3}\pi<\phi$. The existence of the branching point $p$ in the interior of $B$ and \eqref{eq-sig} imply that $\sigma=2\cos{\frac{\phi}{2}}\sigma_0$. Let $\mathscr{G}$ be the network defined by the branching points $p$ and $p_i=q_i$, $i=1,2$ and $\mathscr{G}^\prime$
  the network determined by the perturbed branching points $p^\prime=p$ and $p_i^\prime=q_i+h\tau_i$, $i=1,2$, where $h>0$ is a small number and $\tau_i\in\SF^1$ is directed as the bisectrix of the angle in $p_i$ and points inside $B$. Then, using also that $\sigma=2\cos{\frac{\phi}{2}}\sigma_0$, we obtain
  \begin{equation}
  \mathscr{F}(\mathscr{G}^\prime)-\mathscr{F}(\mathscr{G})=2h\sigma-4h\cos(\frac{\psi}{2})\sigma_0+o(h)
  =2\sigma h(1-\frac{\cos(\frac{1}{4}(\pi-\phi))}{\cos{\frac{\phi}{2}}})+o(h).
  \label{psi<phi}
  \end{equation}
  Since $\phi>\frac{1}{3}\pi$ implies $\frac{\cos(\frac{1}{4}(\pi-\phi))}{\cos{\frac{\phi}{2}}}>1$, \eqref{psi<phi} shows that $\mathscr{G}\neq\mathscr{G}_f$. In a similar way one checks that any $\mathscr{G}$ with a unique branching point that coincides with one vertices of $\mathcal{P}_3$ is not a minimizer. Therefore we conclude that, for $0<\sigma<\sqrt{3}\sigma_0$, $\mathscr{G}_f^0$ is the unique minimizer while $\mathscr{G}_f^R$ is the unique minimizer for $\sigma\in(\sqrt{3}\sigma_0, 2\sigma_0)$.
  Then Theorem \ref{fine} yields a detailed description of the fine structure of $u^\epsilon$ for $\sigma\neq\sqrt{3}\sigma_0$. This confirm the results in \cite{f4} where the existence of the solutions associated to $\mathscr{G}_f^0$ and  $\mathscr{G}_f^R$ by Theorem \ref{fine} is established in the class of symmetric solutions.
  
  Before moving to the next example, we can ask what could be the counterpart, both for the parabolic dynamics associated to $J_\Omega^\epsilon$ and for the geometric curvature flow, of the manifold of the equal-energetic networks described above. We expect that it corresponds to invariant manifolds. In particular, for the curvature flow, there should exit manifolds of self-similar solutions with the property the the triangle of the network shrinks to zero or expands to infinite depending on wether $\sigma<\sqrt{3}\sigma_0$ or $\sigma>\sqrt{3}\sigma_0$. For the general theory of network dynamics we refer to \cite{MNP}, see also 
 \cite{BHM} and \cite{CG}.
 \vskip.2cm  
  Under the standing assumptions on $W$ and the boundary datum $v_0^\epsilon$ we now assume that $W$ has $N=7$ zeros: $0,a,b=ra,c=r^2a$, $a^\prime,b^\prime=ra^\prime,c^\prime=r^2a^\prime$ for some $a\neq 0$, $a^\prime=-\lambda a$, $\lambda>0$ (see Figure \ref{N7}).
\begin{figure}
  \begin{center}
\begin{tikzpicture}[scale=.73]
\draw[fill=black,black] (0,0) circle [radius=.045];
\draw[fill=black,black] (0,3) circle [radius=.045];
\draw[fill=black,black] (2.598,-1.5) circle [radius=.045];
\draw[fill=black,black] (-2.598,-1.5) circle [radius=.045];
\draw[fill=black,black] (0,-.5) circle [radius=.045];
\draw[fill=black,black] (.433,.25) circle [radius=.045];
\draw[fill=black,black] (-.433,.25) circle [radius=.045];
\draw[]  (-2.598,-1.5) to [out=45,in=255] (0,3);
\draw[] (2.598,-1.5) to [out=165,in=15]  (-2.598,-1.5);
\draw[] (0,3) to [out=285,in=135]  (2.598,-1.5);
\draw[] (-2.598,-1.5) to [out=35,in=230] (-.433,.25);
\draw[] (0,3) to [out=265,in=70]  (-.433,.25);
\draw[] (2.598,-1.5) to [out=155,in=350]  (0,-.5);
\draw[](-2.598,-1.5) to [out=25,in=190]  (0,-.5);
\draw[] (2.598,-1.5) to [out=145,in=310]  (.433,.25);
\draw[] (0,3) to [out=275,in=110]  (.433,.25);
\draw[] (0,0)-- (0,-.5);
\draw[] (0,0)-- (.433,.25);
\draw[] (0,0)-- (-.433,.25);
\draw[] (-.433,.25) to [out=355,in=185](.433,.25);
\draw[] (0,-.5) to [out=65,in=235](.433,.25);
\draw[] (0,-.5) to [out=115,in=315](-.433,.25);
\node[right] at(2.598,-1.5) {$a$};
\node[left] at (-2.598,-1.5)  {$c$};
\node[above] at (0,3) {$b$};

\node[right] at (.433,.25) {$c^\prime$};
\node[left] at (-.433,.25)  {$a^\prime$};
\node[below] at (0,-.5) {$b^\prime$};
\end{tikzpicture}
\end{center}
\caption{A schematic illustration of the connections for $W$ with $N=7$.}
\label{N7}
\end{figure}
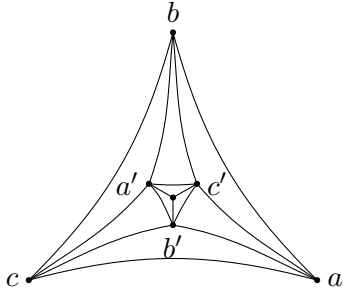
We may think that $W$ is obtained from the previous potential with $N=4$ via a bifurcation of $0$ into the four zeros $0,a^\prime,b^\prime,c^\prime$. 
Since $W$ is invariant under the group $Z_3$, the surface tensions of all the connections that lie on the same group orbit of  $Z_3$ have the same value and we set $\sigma=\sigma_{ab}$, $\sigma_0=\sigma_{ac^\prime}$,
 $\tau=\sigma_{a^\prime b^\prime}$,  $\tau_0=\sigma_{0a^\prime}$, $\sigma^\prime=\sigma_{aa^\prime}$  and $\sigma_{00}=\sigma_{0a}$ (the connections $a,a^\prime$ and $0,a$ are not indicated in Figure \ref{N7}). We assume:
 \begin{equation}
 \sigma^\prime>\sigma_0.
 \label{sigma-pr}
 \end{equation}
Note that $N=7$ and Theorem \ref{network} and the definition of $G_f$ imply $n_s=15$, $n_b=9$  $\nu_s=12$.
  To keep keep things as simple as possible, we restrict to the case of minimizers $u^\epsilon$ equivariant under $Z_3$ and therefore we only consider networks invariant under $Z_3$. We first analyze the case where all the $n_b=9$ branching points are isolated and lie in the interior of $B$. Let $\mathrm{sg}[p,q]$ be the segment that separates $S_{a,f}$ and $S_{b,f}$,  $p$ is a branching point and $q$ is one of the vertices of $\mathcal{P}_3$.
  Moreover $p$ belongs to the ray of $B$ determined by $q$. Let $2\psi$, $\psi\in(0,\frac{\pi}{6})$, be the angle between the segments $\mathrm{sg}[p,p_i]$, $i=1,2$, that join at $p$ with $\mathrm{sg}[p,q]$. The angle between  $\mathrm{sg}[p,q]$ and  $\mathrm{sg}[p,p_i]$, is equal to $\pi-\psi$, $i=1,2$. $G_f$ is composed of $\nu_s=9$ segments six of which are $\mathrm{sg}[p,p_i]$, $i=1,2$ and their images under $r$ and $r^2$. Hence there are two possibilities: $I$: the union of these six segments is the boundary of a star with three tip and we can assume that $p_1=rp_2$. $II$: $p_1\neq rp_2$ and the union of $\mathrm{sg}[p_1,p_2]$, $\mathrm{sg}[p_1,rp_2]$ and their images under $r$ and $r^2$ is the boundary of a hexagonal region (see Figure \ref{1and2}).
  In case $I$ $p_1=rp_2$ is the extreme of a segment $\mathrm{sg}[p_1,p_3]$ which forms an angle of $\frac{\pi}{6}$ with $\mathrm{sg}[p,q]$ and $p_3$ is a vertex of an equilateral triangle $T$ with center in the center of $B$. The three sides of this triangle together with $\mathrm{sg}[p_1,p_3]$ and its images under $r$ and $r^2$ complete the network $\mathscr{G}_f^I$ in case $I$. The networks  $\mathscr{G}_f^I$ and the network  $\mathscr{G}_f^{II}$ of case $II$ are illustrated in Figure \ref{1and2}.
  From equivariance it follows that in both cases the region that contains the center of $B$, a triangle in case $I$ and a hexagon in case $II$ must be identified with $S_0,f$. Moreover \eqref{sigma-pr} implies that, if  $S_{\tilde{a},f}$ is the set that joins at $p$ with $S_{a,f}$ and  $S_{b,f}$, then necessarily $\tilde{a}=c^\prime$ as indicated in Figure \ref{1and2}. 
  
  We assume that $B$ is the unit ball and set $d=1-\vert q-p\vert$ where $p,q$ are the vertices of $\mathrm{sg}[p,q]$. We denote by $2\ell$ the length of the side of the triangle $T$ in case $I$ and set $2\ell=\vert p_1-p_2\vert$  in case $II$ where $p_1,p_2$ are the vertices of $\mathrm{sg}[p_1,p_2]$ considered above. We observe that both in case $I$ and $II$, for each $\psi\in(0,\frac{\pi}{6})$ there is a two parameter family of networks of type  $\mathscr{G}_f^I$ and $\mathscr{G}_f^{II}$  with parameters $d\in(0,1)$ and $\ell\in(0,g(\psi)d)$, where we have set $g(\psi)=\frac{\sqrt{3}}{2}\frac{\sin\psi}{\cos(\frac{\pi}{6}-\psi)}$.
  
 \begin{figure}
  \begin{center}
\begin{tikzpicture}[scale=.8]
\draw[] (0,0) circle [radius=3];
\draw[] (10,0) circle [radius=3];
\draw[blue] (0,2.5)--(0,3);
\draw[blue](2.165,-1.25)-- (2.598,-1.5);
\draw[blue](-2.165,-1.25)-- (-2.598,-1.5);
\draw[blue] (10,2.5)--(10,3);
\draw[blue](12.165,-1.25)-- (12.598,-1.5);
\draw[blue](7.835,-1.25)-- (7.402,-1.5);
\draw[blue] (.433,.25)--(.6495,.375);
\draw[blue] (-.433,.25)--(-.6495,.375);
\draw[blue] (0,-.5)--(0,-.75);
\draw[blue] (.433,.25)--(-.433,.25);
\draw[blue] (0,-.5)--(.433,.25);
\draw[blue] (0,-.5)--(-.433,.25);
\draw[blue] (0,2.5)--(.6495,.375);
\draw[blue] (0,2.5)--(-.6495,.375);
\draw[blue] (2.165,-1.25)--(.6495,.375);
\draw[blue] (2.165,-1.25)--(0,-.75);
\draw[blue] (-2.165,-1.25)--(-.6495,.375);
\draw[blue] (-2.165,-1.25)--(0,-.75);
\draw[fill=red,red] (0,2.5) circle [radius=.045];
\draw[fill=red,red] (0,-.5) circle [radius=.045];
\draw[fill=red,red] (0,-.75) circle [radius=.045];
\draw[fill=red,red] (2.165,-1.25) circle [radius=.045];
\draw[fill=red,red] (-2.165,-1.25) circle [radius=.045];
\draw[fill=red,red] (.433,.25) circle [radius=.045];
\draw[fill=red,red] (-.433,.25) circle [radius=.045];
\draw[fill=red,red] (.6495,.375) circle [radius=.045];
\draw[fill=red,red] (-.6495,.375) circle [radius=.045];
\draw[dotted] (0,2.5)--(0,1);
\draw[dotted] (-.6495,.375)--(-1.9485,1.125);
\draw[blue] (10,2.5)--(10.45859,1);
\draw[blue] (10,2.5)--(9.54141,1);
\draw[blue](9.54141,1)--(10.45859,1);
\draw[blue] (7.835,-1.25)--(8.89046,-.10825);
\draw[blue] (8.89046,-.10825)--(9.36327,-.99247);
\draw[blue] (7.835,-1.25)--(9.36327,-.99247);
\draw[blue] (12.1650,-1.25)--(11.09532,-.10825);
\draw[blue] (12.1650,-1.25)--(10.63673,-.99247);
\draw[blue] (11.09532,-.10825)--(10.63673,-.99247);
\draw[blue] (10.45859,1)--(11.09532,-.10825);
\draw[blue] (9.54141,1)--(8.89046,-.10825);
\draw[blue] (9.36327,-.99247)--(10.63673,-.99247);
\draw[fill=red,red] (10,2.5) circle [radius=.045];
\draw[fill=red,red] (7.835,-1.25) circle [radius=.045];
\draw[fill=red,red] (12.1650,-1.25) circle [radius=.045];
\draw[fill=red,red] (10.63673,-.99247) circle [radius=.045];
\draw[fill=red,red] (9.36327,-.99247) circle [radius=.045];
\draw[fill=red,red] (8.89046,-.10825) circle [radius=.045];
\draw[fill=red,red] (9.54141,1) circle [radius=.045];
\draw[fill=red,red] (11.09532,-.10825) circle [radius=.045];
\draw[fill=red,red] (10.45859,1) circle [radius=.045];
\node[below,left] at (.1,1.150) {$\psi$};
\node[above] at (-1.1,.85) {$\frac{\pi}{3}+\psi$};
\node[] at (1.5155,.875) {$a$};
\node[left] at (-1.6155,.775) {$b$};
\node[] at (0,-1.8) {$c$};
\node[] at (0,.85) {$c^\prime$};
\node[] at (-.866,-.5) {$a^\prime$};
\node[] at (.866,-.5) {$b^\prime$};
\node[] at (0,0) {$0$};
\node[] at (11.5155,.875) {$a$};
\node[left] at (8.3845,.775) {$b$};
\node[] at (10,-1.8) {$c$};
\node[] at (10,1.5) {$c^\prime$};
\node[] at (8.7009,-.75) {$a^\prime$};
\node[] at (11.299,-.75) {$b^\prime$};
\node[] at (10,0) {$0$};
\end{tikzpicture}

\end{center}
\caption{The geometry of networks $\mathscr{G}_f^I$ and $\mathscr{G}_f^{II}$. }
\label{1and2}
\end{figure}
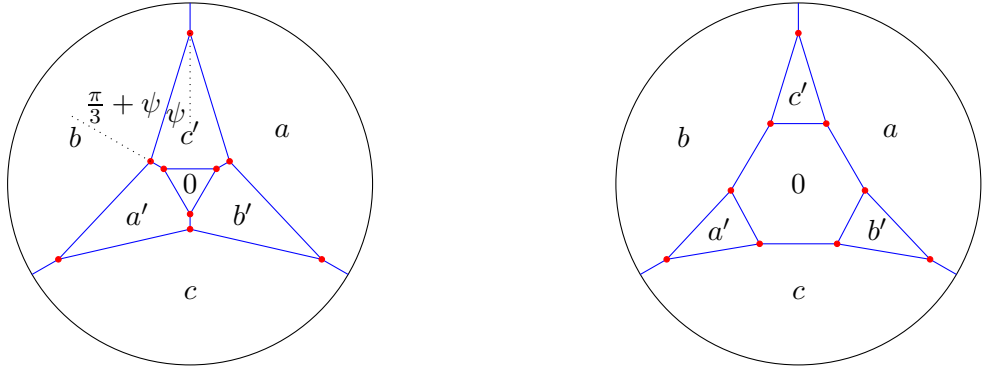

  In order that $\mathscr{G}_f^I$ and $\mathscr{G}_f^{II}$ can be identified with solutions of \eqref{inF} in Corollary \ref{cor} the $\sigma_{aa^\prime}$ need to have precise expressions in term of $\psi\in(0,\frac{\pi}{6})$ that are consequences of \eqref{eq-sig}. We can assume $\sigma_0=1$. With this normalization we find from \eqref{eq-sig} the following conditions for the minimality of  $\mathscr{G}_f^I$ and $\mathscr{G}_f^{II}$
  \begin{equation}
  \text{Case}\;\;I:\;\;\;\left.\begin{array}{l}
  \sigma_0=1,\\
   \sigma=2\cos\psi,\\
   \tau=2\sin(\frac{\pi}{6}-\psi),\\
   \tau_0=\frac{2}{\sqrt{3}}\sin(\frac{\pi}{6}-\psi).
  \end{array}\right.
  \quad\quad\;
  \text{Case}\;\;II:\;\;\; \left.\begin{array}{l}
   \sigma_0=1,\\
   \sigma=2\cos\psi,\\
   \sigma_{00}=\frac{2}{\sqrt{3}}\cos\psi,\\
   \tau_0=\frac{2}{\sqrt{3}}\sin(\frac{\pi}{6}-\psi).
  \end{array}\right.
  \label{sig-minphi}
  \end{equation}
  One can verify that, as expected, $\mathscr{F}(\mathscr{G}_f^I)=\mathscr{F}(\mathscr{G}_f^{II})$ is a constant: \begin{equation}
  \mathscr{F}(\mathscr{G}_f^I)=\mathscr{F}(\mathscr{G}_f^{II})=6\cos\psi,\;\;\,d\in(0,1),\ell
 \in(0,g(\psi)d),\;\psi\in(0,\frac{\pi}{6}).
  \label{equalF}
  \end{equation}
  This implies that the minimizers $\mathscr{G}_f^I$ and $\mathscr{G}_f^{II}$ are not isolated and do not satisfy the condition \eqref{GIn} required by Theorem \ref{fine}.
  
  In the search of a network $\mathscr{G}_f$ which is isolated and satisfies \eqref{GIn}, it remains to consider networks with branching points on the boundary of $\mathcal{P}_3$ or with branching points that coincide in the interior of $B$. In the class of networks invariant under $Z_3$, the networks to be considered are the ones that correspond to the boundary value of $(d,\ell)$ and precisely: the one-parameter families defined either by $(d,\ell)=(d,g(\psi)d)$, $d\in(0,1)$ or by $(d,\ell)=(1,\ell)$, $\ell\in(0,g(\psi))$. These families can be disregarded since, for each member $\mathscr{G}$ of either family, we have again $\mathscr{F}(\mathscr{G})=6\cos\psi$ and  $\mathscr{G}$ is not an isolated solution of \eqref{inF}. Hence we focus on the networks $\mathscr{G}^{0}$, $\mathscr{G}^*$ and $\mathscr{G}^\triangledown$ corresponding to $(d,\ell)=(0,0),(1,0),(1,g(\psi))$, see Figure \ref{01g}.

\begin{figure}
  \begin{center}
\begin{tikzpicture}[scale=.68]
\draw[] (0,0) circle [radius=3];
\draw[] (-9,0) circle [radius=3];
\draw[] (9,0) circle [radius=3];
\draw[blue] (-9,0)-- (-9,3);
\draw[blue] (-9,0)-- (-11.598,-1.5);
\draw[blue] (-9,0)-- (-6.402,-1.5);
\draw[fill=red,red] (-9,0) circle [radius=.045];
\draw[fill=red,red] (-9,3) circle [radius=.045];
\draw[fill=red,red] (-11.598,-1.5) circle [radius=.045];
\draw[fill=red,red]  (-6.402,-1.5) circle [radius=.045];
\node[] at (-7.4845,.875) {$a$};
\node[left] at (-10.6155,.775) {$b$};
\node[] at (-9,-1.8) {$c$};
\draw[blue] (9,3)-- (9.6495,.375);
\draw[blue] (9,3)-- (8.3504,.375);
\draw[blue] (11.598,-1.5)--(9.6495,.375);
\draw[blue] (11.598,-1.5)--(9,-.75);
\draw[blue] (6.402,-1.5)--(8.3504,.375);
\draw[blue] (6.402,-1.5)--(9,-.75);
\draw[blue] (9.6495,.375)--(9,-.75);
\draw[blue] (8.3504,.375)--(9.6495,.375);
\draw[blue] (9,-.75)--(8.3504,.375);
\draw[fill=red,red] (9,3) circle [radius=.045];
\draw[fill=red,red] (6.402,-1.5) circle [radius=.045];
\draw[fill=red,red] (9,-.75) circle [radius=.045];
\draw[fill=red,red] (11.598,-1.5) circle [radius=.045];
\draw[fill=red,red] (9.6495,.375) circle [radius=.045];
\draw[fill=red,red] (8.3505,.375) circle [radius=.045];
\node[] at (1.5155,.875) {$a$};
\node[left] at (-1.6155,.775) {$b$};
\node[] at (0,-1.8) {$c$};
\node[] at (0,1) {$c^\prime$};
\node[] at (-.866,-.5) {$a^\prime$};
\node[] at (.866,-.5) {$b^\prime$};

\draw[] (0,0) circle [radius=3];
\draw[blue] (0,3)-- (.6495,.375);
\draw[blue] (0,3)-- (-.6495,.375);
\draw[blue] (2.598,-1.5)--(.6495,.375);
\draw[blue] (2.598,-1.5)--(0,-.75);
\draw[blue] (-2.598,-1.5)--(-.6495,.375);
\draw[blue] (-2.598,-1.5)--(0,-.75);
\draw[blue] (0,0)--(0,-.75);
\draw[blue] (0,0)--(.6495,.375);
\draw[blue] (0,0)--(-.6495,.375);
\draw[fill=red,red] (0,0) circle [radius=.045];
\draw[fill=red,red] (0,3) circle [radius=.045];
\draw[fill=red,red] (-2.598,-1.5) circle [radius=.045];
\draw[fill=red,red] (0,-.75) circle [radius=.045];
\draw[fill=red,red] (2.598,-1.5) circle [radius=.045];
\draw[fill=red,red] (.6495,.375) circle [radius=.045];
\draw[fill=red,red] (-.6495,.375) circle [radius=.045];
\node[] at (10.5155,.875) {$a$};
\node[left] at (7.3845,.775) {$b$};
\node[] at (9,-1.8) {$c$};
\node[] at (9,1) {$c^\prime$};
\node[] at (8.134,-.5) {$a^\prime$};
\node[] at (9.866,-.5) {$b^\prime$};
\node[] at (9,0) {$0$};
\node[below] at (-9,-3.2) {$\mathscr{G}^0$};
\node[below] at (0,-3.2) {$\mathscr{G}^*$};
\node[below] at (9,-3.2) {$\mathscr{G}^\triangledown$};

\end{tikzpicture}

\end{center}
\caption{The networks $\mathscr{G}^0, \mathscr{G}^*$ and $\mathscr{G}^\triangledown$
corresponding respectively to $d=0$, $(d,\ell)=(1,0)$ and $(d,\ell)=(1,g(\psi))$. }
\label{01g}
\end{figure}

  For each of these networks there are branching points that coincide and arcs that reduce to a point. For instance, for  $\mathscr{G}^{0}$, the entire $G_f$ coincides with the center $O$ of $B$. We plan to show that the $\sigma_{aa^\prime}$ can be chosen in such way that  $\mathscr{G}^\triangledown$ satisfies \eqref{inF}. In a similar way one can find condition ensuring the same for   $\mathscr{G}^{0}$ or $\mathscr{G}^*$.
   \begin{proposition}
   \label{ET}
   Given $\psi\in(0,\frac{\pi}{6})$ assume that the $\sigma_{aa^\prime}$ satisfy \eqref{triangle}, \eqref{sigma-pr} and the conditions
   \begin{equation}
   \begin{split}
   &\sigma_0=1,\\
   &\tau_0=\frac{2}{\sqrt{3}}\sin(\frac{\pi}{6}-\psi),\\
   &\sigma>2\cos\psi,\\
   &\sigma_{00}>\frac{2}{\sqrt{3}}\cos\psi,\\
   &\tau>2\sin(\frac{\pi}{6}-\psi).\\
   \end{split}
   \label{Sig-Cond}
   \end{equation}
   Then  $\mathscr{G}^\triangledown\in\partial\mathcal{G}_f$ is the unique network invariant under $Z_3$ that satisfies \eqref{inF} and the geometric inequality \eqref{GIn}. Moreover the decomposition $\bar{\Omega}\setminus\mathscr{G}^\triangledown$ is uniquely determined.
   \end{proposition}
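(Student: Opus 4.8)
The plan is to reduce the statement to a finite‑dimensional comparison among $Z_3$‑invariant networks, exploiting that $\mathscr{F}$ is linear in the surface tensions. By Theorem \ref{network} the combinatorial type of such a network is fixed ($n_b=9$, $n_s=15$), and by the classification carried out just before the Proposition every $Z_3$‑invariant network in $\bar{\mathcal{G}}_f$ is of type $I$ or type $II$ --- parametrized by $(d,\ell)$ over the closed triangle $T=\{0\le d\le1,\ 0\le\ell\le g(\psi)d\}$ --- or a degeneration of one of these, with $\mathscr{G}^0,\mathscr{G}^*,\mathscr{G}^\triangledown$ the three corners $(0,0),(1,0),(1,g(\psi))$. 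A minimizer of $\mathscr{F}$ over this class exists by Corollary \ref{cor}, so it suffices to identify it.

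I would write $\sigma=2\cos\psi+\delta_1$, $\tau=2\sin(\frac\pi6-\psi)+\delta_2$, $\sigma_{00}=\frac2{\sqrt3}\cos\psi+\delta_3$ with $\delta_i>0$, keeping $\sigma_0=1$ and $\tau_0=\frac2{\sqrt3}\sin(\frac\pi6-\psi)$ exactly as in the special values \eqref{sig-minphi}, and leaving $\sigma'$ fixed. Linearity of $\mathscr{F}$ in the tensions gives
\[
\mathscr{F}(\mathscr{G})=\mathscr{F}_{\mathrm{sp}}(\mathscr{G})+\delta_1\Lambda_\sigma(\mathscr{G})+\delta_2\Lambda_\tau(\mathscr{G})+\delta_3\Lambda_{00}(\mathscr{G}),
\]
where $\mathscr{F}_{\mathrm{sp}}$ is the functional associated with the special values and $\Lambda_\sigma,\Lambda_\tau,\Lambda_{00}\ge0$ are the total lengths of the arcs of $\mathscr{G}$ of type $ab$, $a'b'$ and $0a$ respectively (with respect to the decomposition carried by $\mathscr{G}$). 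From the discussion preceding the Proposition, $\mathscr{G}_f^I$ and $\mathscr{G}_f^{II}$ are global minimizers of $\mathscr{F}_{\mathrm{sp}}$ over the $Z_3$‑invariant class, with $\inf\mathscr{F}_{\mathrm{sp}}=6\cos\psi$ (for configurations outside the $(d,\ell)$‑families this is a continuity argument, since they are limits of type $I$/$II$ networks). Hence $\mathscr{F}(\mathscr{G})\ge\mathscr{F}_{\mathrm{sp}}(\mathscr{G})\ge6\cos\psi$ for every $Z_3$‑invariant $\mathscr{G}$, with equality iff $\mathscr{G}$ is an $\mathscr{F}_{\mathrm{sp}}$‑minimizer and $\Lambda_\sigma=\Lambda_\tau=\Lambda_{00}=0$. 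As an alternative route one can argue that $\mathscr{F}$ has no interior critical configuration: imposing the balance condition \eqref{eq-sig} at all nine branching points forces the special tensions \eqref{sig-minphi}, contradicting the strict inequalities in \eqref{Sig-Cond}, so the minimizer of $\mathscr{F}$ lies at a corner of $T$.

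The next step is to show the only such network is $\mathscr{G}^\triangledown$, which gives \eqref{inF} and uniqueness at once. Among the type $I$/$II$ families and their degenerations, $\Lambda_\sigma=\Lambda_\tau=\Lambda_{00}=0$ iff every positive‑length arc is of type $\sigma_0$ or $\tau_0$: if $d<1$ the three segments $\mathrm{sg}[p,q]$ separating $S_a$ from $S_b$ survive, so $\Lambda_\sigma\ge3(1-d)>0$; if $d=1$ but $\ell<g(\psi)$ the short segments joining the intermediate branching points to the central polygon survive and contribute to $\Lambda_\tau$ (type $I$) or $\Lambda_{00}$ (type $II$); and at $(d,\ell)=(1,g(\psi))$ both families collapse to the single network $\mathscr{G}^\triangledown$, whose positive‑length arcs are exactly the three $\tau_0$‑sides of $\partial S_0$ and the six $\sigma_0$‑arcs reaching the boundary vertices. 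Uniqueness of the decomposition $\bar\Omega\setminus\mathscr{G}^\triangledown=\cup_aS_a$ then follows from the same inequality: equivariance forces the central region to be $S_0$ and the three regions meeting $\partial B$ to be $S_a,S_b,S_c$; the only remaining freedom is the cyclic placement of $\{a',b',c'\}$ in the middle ring, and the placement putting $S_{a'}$ adjacent to $S_a$ produces a positive‑length $aa'$‑arc, which by \eqref{sigma-pr} is not a $\sigma_0$‑arc, while the minimizers $\mathscr{G}_f^I,\mathscr{G}_f^{II}$ use no $aa'$‑arc, so this labeling has $\mathscr{F}_{\mathrm{sp}}>6\cos\psi$ and is excluded.

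For the nondegeneracy \eqref{GIn} I would note that near $\mathscr{G}^\triangledown$ the $Z_3$‑invariant networks with the same end points form a finite‑dimensional family with $\mathscr{G}^\triangledown$ at a corner, and in the splitting above the bracket $\mathscr{F}_{\mathrm{sp}}-6\cos\psi$ and each $\Lambda$‑term are $\ge0$. Moving away from $\mathscr{G}^\triangledown$ by $\rho=d(\mathscr{G}',\mathscr{G}^\triangledown)$ along the edge $\ell=g(\psi)d$ makes $\Lambda_\sigma\gtrsim\rho$ and along the edge $d=1$ makes $\Lambda_\tau+\Lambda_{00}\gtrsim\rho$, so a linear lower bound holds in every direction transverse to merely bending arcs, while bending a positive‑length arc by $\rho$ with the decomposition fixed raises $\mathscr{F}_{\mathrm{sp}}$ by $\gtrsim\rho^2$; on the bounded set $\bar{\mathcal{G}}_f$ a linear bound implies a quadratic one, so $\mathscr{F}(\mathscr{G}')-\mathscr{F}(\mathscr{G}^\triangledown)\ge c_0\rho^2$. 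The strict positivity of $\delta_1,\delta_2,\delta_3$ is precisely what removes the equi‑energetic degeneracy of the special case. The step I expect to be the real obstacle is the explicit bookkeeping behind the classification of $Z_3$‑invariant networks and the identification of the common limit of the two families at $(1,g(\psi))$ --- i.e. making the parametrization $(d,\ell)$ and the complete list of degenerate configurations fully explicit; the lower bound $\mathscr{F}_{\mathrm{sp}}\ge6\cos\psi$ and the exclusion of interior critical points via \eqref{eq-sig} are then routine.
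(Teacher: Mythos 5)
Your proposal is correct in substance but follows a genuinely different route from the paper. The paper proves the result \emph{locally}: it treats $\mathscr{G}^\triangledown$ as a corner point of the two families of type $I$ and $II$, computes the first variation $\delta\mathscr{F}$ for the admissible (one-sided) perturbations parametrized by $h,s,\rho,\beta$, and from $\delta\mathscr{F}\geq 0$ derives precisely the conditions \eqref{Sig-Cond} (in particular it is the pair $\beta=0$, $\beta=\pi$ that forces $\tau_0$ to be an \emph{equality} while the others may be strict inequalities); isolated local minimality then comes from strictness, and global uniqueness is settled by directly computing $\mathscr{F}(\mathscr{G}^0)-\mathscr{F}(\mathscr{G}^\triangledown)=3(\sigma-2\cos\psi)$ and $\mathscr{F}(\mathscr{G}^*)-\mathscr{F}(\mathscr{G}^\triangledown)=3(\tau-2\sin(\frac{\pi}{6}-\psi))\lambda$. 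You instead exploit linearity of $\mathscr{F}$ in the tensions to write $\mathscr{F}=\mathscr{F}_{\mathrm{sp}}+\delta_1\Lambda_\sigma+\delta_2\Lambda_\tau+\delta_3\Lambda_{00}$ and reduce everything to the equality case of $\mathscr{F}\geq 6\cos\psi$; this gives global minimality, uniqueness of the network \emph{and} of the labeling in one stroke, and your identification of $\mathscr{G}^\triangledown$ as the unique configuration all of whose positive-length arcs are of type $\sigma_0$ or $\tau_0$ is exactly why the paper's two explicit comparisons come out positive. What your route buys is transparency of the global statement; what it loses is the explanation of \emph{why} $\tau_0$ must be tuned exactly while $\sigma,\sigma_{00},\tau$ need only satisfy inequalities (you take \eqref{Sig-Cond} as given, the paper derives it). Two caveats you should make explicit: (a) the load-bearing input $\inf\mathscr{F}_{\mathrm{sp}}=6\cos\psi$ with minimizing set equal to the closure of the type $I$/$II$ families needs the existence of a minimizer from Corollary \ref{cor} plus the fact that any $Z_3$-invariant minimizer satisfies the stationarity conditions \eqref{eq-sig} and hence falls into the classification preceding the Proposition — this is no worse than the paper's own level of rigor, but it is where the argument actually lives; (b) your treatment of the nondegeneracy \eqref{GIn} (linear growth of the $\Lambda$'s in the corner directions, quadratic growth from bending, ``linear implies quadratic on a bounded set'') is essentially the same hand-wave as the paper's ``some analysis of the higher order terms in \eqref{Stab-Cond}'', and in both cases one must check that $d(\mathscr{G}',\mathscr{G}^\triangledown)$ is genuinely controlled by the corner parameters together with the deviation of each arc from the segment joining its endpoints.
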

   \begin{proof}
  $\mathscr{G}^\triangledown$ is a limit of both families of networks of type $I$ and $II$. Therefore, if $\tilde{\mathscr{G}}$ is a small perturbation of $\mathscr{G}^\triangledown$, in estimating the difference $\delta\mathscr{F}=\mathscr{F}(\tilde{\mathscr{G}})-\mathscr{F}(\mathscr{G}^\triangledown)$, we need distinguish if $\tilde{\mathscr{G}}$ is of type $I$ or $II$. In both case we change $d=1$ to $\tilde{d}=1-h$ for $h>0$ small.
  Then, if we consider $\mathscr{G}^\triangledown$ as a limit point of the networks of type $I$ we have $p_3=p_1$ and we set $s=\vert\tilde{p}_3-p_1\vert>0$ where $\tilde{p}_3 \in\mathrm{sg}(p_1,O)$, $s$ small. If, instead, $\mathscr{G}^\triangledown$ is seen as a limit element of the family of type $II$, we set $\rho=\vert\tilde{p}_1-p_1\vert>0$ where $\tilde{p}_1$ is a point in the closed half-plane that contains $q$ and is bounded by the line through $p_1$ and $O$. We set $\tilde{p}_1=(\rho\cos\beta,\rho\sin\beta)$ where $\beta\in[0,\pi]$ and $(\rho,\beta)$ are the polar coordinates with origin in $p_1$ and polar axis the half-line determined by $p_1$ and $O$ (see Figure \ref{pert}).

 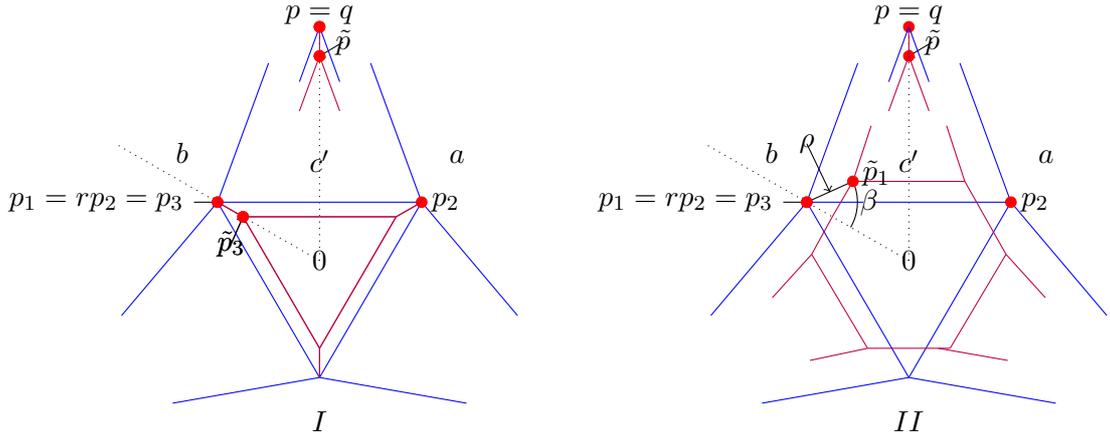
\begin{figure}
  \begin{center}
\begin{tikzpicture}[scale=1.55]
\draw[dotted] (0,0)--(0,2);
\draw[dotted] (0,0)--(-1.732,1);
\node[] at (0,2.1) {$p=q$};
\draw[->](.2,1.87)--(0,1.75);
\node[] at (.2,1.87) {$\tilde{p}$};
\draw[fill=red,red] (0,2) circle [radius=.045];
\draw[fill=red,red] (0,1.75) circle [radius=.045];
\draw[blue] (0,2)-- (-.171,1.53);
\draw[blue] (0,2)-- (.171,1.53);
\draw[purple] (0,1.75)-- (-.171,1.28);
\draw[purple] (0,1.75)-- (.171,1.28);
\draw[purple] (0,1.75)-- (0,2);
\draw[->](-1.066,.5)--(-.866,.5);
\draw[fill=red,red] (-.866,.5) circle [radius=.045];
\node[left] at (-1.066,.5) {$p_1=rp_2=p_3$};
\draw[blue] (-.866,.5)-- (0,.5);
\draw[purple] (-.6495,.375)-- (0,.375);
\draw[blue] (-.866,.5)-- (-.433,1.6896);
\draw[purple] (-.866,.5)-- (-.6495,.375);
\draw[blue] (-.866,.5)--(0,-1);
\draw[purple] (-.6495,.375)--(0,-.75);
\draw[blue] (-.866,.5)--(-1.6797,-.4698);
\draw[blue] (.866,.5)--(1.6797,-.4698);
\draw[blue] (-1.2467,-1.2198)--(0,-1);
\draw[blue] (1.2467,-1.2198)--(0,-1);
\draw[blue] (.866,.5)-- (0,.5);
\draw[purple] (.6495,.375)-- (0,.375);
\draw[blue] (.866,.5)-- (.433,1.6896);
\draw[purple] (.866,.5)-- (.6495,.375);
\draw[blue] (.866,.5)--(0,-1);
\draw[purple] (.6495,.375)--(0,-.75);
\draw[purple] (0,-1)--(0,-.75);
\node[right] at  (.866,.5) {$p_2$};
\draw[->] (-.7495,.145)--(-.6495,.375);
\node[] at (-.7495,.145){$\tilde{p}_3$};
\node[] at  (0,.85) {$c^\prime$};
\node[right,above] at  (1.166,.75) {$a$};
\node[left,above] at  (-1.166,.75) {$b$};
\node[] at (0,0){$0$};
\draw[fill=red,red] (-.6495,.375) circle [radius=.045];
\draw[fill=red,red] (-.866,.5) circle [radius=.045];
\draw[fill=red,red] (0,2) circle [radius=.045];
\draw[fill=red,red] (0,1.75) circle [radius=.045];
\draw[fill=red,red] (.866,.5) circle [radius=.045];
\draw[dotted] (5,0)--(5,2);
\draw[dotted] (5,0)--(3.268,1);
\node[] at (5,2.1) {$p=q$};
\draw[->](5.2,1.87)--(5,1.75);
\node[] at (5.2,1.87) {$\tilde{p}$};
\draw[fill=red,red] (5,2) circle [radius=.045];
\draw[fill=red,red] (5,1.75) circle [radius=.045];
\draw[blue] (5,2)-- (4.829,1.53);
\draw[blue] (5,2)-- (5.171,1.53);
\draw[purple] (5,1.75)-- (4.829,1.28);
\draw[purple] (5,1.75)-- (5.171,1.28);
\draw[purple] (5,1.75)-- (5,2);
\draw[->](3.934,.5)--(4.134,.5);
\draw[fill=red,red] (4.134,.5) circle [radius=.045];
\node[left] at (3.934,.5) {$p_1=rp_2=p_3$};
\draw[blue] (4.134,.5)-- (5,.5);
\draw[purple] (-.6495,.375)-- (0,.375);
\draw[blue] (4.134,.5)-- (4.567,1.6896);
\draw[purple] (-.866,.5)-- (-.6495,.375);
\draw[blue] (4.134,.5)--(5,-1);
\draw[purple] (-.6495,.375)--(0,-.75);
\draw[blue] (4.134,.5)--(3.3203,-.4698);
\draw[blue] (5.866,.5)--(6.6797,-.4698);
\draw[blue] (3.7533,-1.2198)--(5,-1);
\draw[blue] (6.2467,-1.2198)--(5,-1);
\draw[blue] (5.866,.5)-- (5,.5);
\draw[purple] (.6495,.375)-- (0,.375);
\draw[blue] (5.866,.5)-- (5.433,1.6896);
\draw[purple] (.866,.5)-- (.6495,.375);
\draw[blue] (5.866,.5)--(5,-1);
\draw[purple] (.6495,.375)--(0,-.75);
\draw[purple] (0,-1)--(0,-.75);
\draw[purple] (4.3505,.375)--(4.5255,.6781);
\draw[purple] (4.3505,.375)--(4.1755,.0539);
\draw[purple] (4.65,-.75)--(5.35,-.75);
\draw[purple] (4.65,-.75)--(4.1755,.0539);
\draw[purple](5.4745,.6781)--(4.5255,.6781);
\draw[purple](5.4745,.6781)--(5.8245,.0539);
\draw[purple](5.35,-.75)--(5.8245,.0539);
\draw[purple](4.5255,.6781)--(4.68,1.1536);
\draw[purple](5.4745,.6781)--(5.32,1.1536);
\draw[purple](4.65,-.75)--(4.161,-.8539);
\draw[purple](5.25,-.75)--(5.839,-.8539);
\draw[purple] (4.1755,.0539)--(3.8415,-.3176);
\draw[purple] (5.8245,.0539)--(6.1585,-.3176);
\draw[] (4.134,.5)--(4.5255,.6781);
\draw [] (4.5064,.285) arc [radius=.43, start angle=-30, end angle=24.4622];
\draw[->] (4.134,1) --(4.329,.589);
\draw[fill=white,white] (4.664,.5) circle [radius=.045];
\node[] at (4.134,1) {$\rho$};
\node[]at (4.664,.5) {$\beta$};
\node[]at (4.7255,.7481) {$\tilde{p}_1$};
\node[right] at  (5.866,.5) {$p_2$};
\draw[->] (-.7495,.145)--(-.6495,.375);
\node[] at (-.7495,.145){$\tilde{p}_3$};
\node[] at  (5,.85) {$c^\prime$};
\node[right,above] at  (6.166,.75) {$a$};
\node[left,above] at  (3.834,.75) {$b$};
\node[] at (5,0){$0$};
\draw[fill=red,red] (-.6495,.375) circle [radius=.045];
\draw[fill=red,red] (4.134,.5) circle [radius=.045];
\draw[fill=red,red] (0,2) circle [radius=.045];
\draw[fill=red,red] (0,1.75) circle [radius=.045];
\draw[fill=red,red] (5.866,.5) circle [radius=.045];
\draw[fill=red,red] (4.5255,.6781) circle [radius=.045];
\node[below] at (0,-1.2){$I$};
\node[below] at (5,-1.2){$II$};
\end{tikzpicture}
\end{center}
\caption{Schematic illustration of the perturbation $\tilde{\mathscr{G}}$. }
\label{pert}
\end{figure}

   With these definitions we have, using also the symmetry,
  \begin{equation}
  \begin{split}
  &\delta\mathscr{F}=6\Big((\frac{1}{2}\sigma-\cos\psi\sigma_0)h+(\frac{1}{2}\tau-\cos(\frac{\pi}{6})\tau_0)s\Big)+\ldots\geq 0,\;\;
  \tilde{\mathscr{G}}\;\text{of type}\;I,\\
  &\delta\mathscr{F}=6\Big((\frac{1}{2}\sigma-\cos\psi\sigma_0)h+(\sigma_{00}\sin\beta-\cos(\beta-\frac{\pi}{6})\tau_0\\
 &-\cos(\frac{2}{3}\pi-\psi-\beta)\sigma_0)\rho
  \Big)+\ldots\geq 0,\;\;
  \tilde{\mathscr{G}}\;\text{of type}\;II.
  \end{split}
  \label{Stab-Cond}
  \end{equation}
  These inequalities must hold for $\beta\in(0,\pi)$, $h\geq 0$, $s\geq 0$  and $\rho\geq 0$. It follows:
  \begin{equation}
  \begin{split}
  &\sigma\geq 2\cos\psi\sigma_0,\\
  &\tau\geq\sqrt{3}\tau_0,\\
  &-\frac{\sqrt{3}}{2}\tau_0-\cos(\frac{2}{3}\pi-\psi)\sigma_0\geq 0,\;\;\text{if}\,\;\beta=0,\\
  &\frac{\sqrt{3}}{2}\tau_0-\cos(\frac{\pi}{3}+\psi)\sigma_0\geq 0,\;\;\text{if}\,\;\beta=\pi,\\
  &\sin\beta\sigma_{00}-\cos(\beta-\frac{\pi}{6})\tau_0-\cos(\frac{2}{3}\pi-\psi-\beta)\sigma_0\geq 0,\;\text{for}\,\;\beta\in(0,\pi).
  \end{split}
  \label{cond-hs}
  \end{equation}
  Since $-\cos(\frac{2}{3}\pi-\psi)=\cos(\frac{\pi}{3}+\psi)=\sin(\frac{\pi}{6}-\psi)$, \eqref{cond-hs}$_{3,4}$ imply 
  \begin{equation}
  \tau_0=\frac{2}{\sqrt{3}}\sin(\frac{\pi}{6}-\psi)\sigma_0.
  \label{tau0-as}
  \end{equation}
  From \eqref{cond-hs}$_5$ and this expression of $\tau_0$ we obtain
  \begin{equation}
  \begin{split}
  &\sin\beta\sigma_{00}-(\frac{2}{\sqrt{3}}\cos(\beta-\frac{\pi}{6})\sin(\frac{\pi}{6}-\psi)
  +\cos(\frac{2}{3}\pi-\psi-\beta))\sigma_0\\
  &=\sin\beta(\sigma_{00}-(\frac{1}{\sqrt{3}}\sin(\frac{\pi}{6}-\psi)+\sin(\frac{2}{3}\pi-\psi))\sigma_0)
  -\cos\beta(\sin(\frac{\pi}{6}-\psi)+\cos(\frac{2}{3}\pi-\psi))\sigma_0\\
  &=\sin\beta(\sigma_{00}-(\frac{1}{\sqrt{3}}\sin(\frac{\pi}{6}-\psi)+\sin(\frac{2}{3}\pi-\psi))\sigma_0)\geq 0,\;\text{for}\,\;\beta\in(0,\pi),\\
  &\Rightarrow\\
  &\sigma_{00}\geq\big(\frac{1}{\sqrt{3}}\sin(\frac{\pi}{6}-\psi)+\cos(\sin(\frac{\pi}{6}-\psi))\big)\sigma_0
  =\frac{2}{\sqrt{3}}\cos\psi\sigma_0.
  \end{split}
  \label{sigma>}
  \end{equation}
  From \eqref{Stab-Cond} and \eqref{sigma>} and some analysis of the higher order terms in \eqref{Stab-Cond} it follows that with the strict inequalities for $\sigma, \sigma_{00}, \tau$ the network $\mathscr{G}^\triangledown$
  is an isolated local minimizer of $\mathscr{F}$ in the class of networks invariant under $Z_3$. To conclude that indeed, under the conditions \eqref{Stab-Cond}, $\mathscr{G}^\triangledown$ is the unique solution of \eqref{inF} we need to show that the energy $\mathscr{F}(\mathscr{G}^\triangledown)$ is strictly less than the energy of the competitors $\mathscr{G}^0$ and $\mathscr{G}^*$. Since, in spite of the inequalities for $\sigma, \sigma_{00}, \tau$ we still have $\mathscr{F}(\mathscr{G}^\triangledown)=6\cos\psi$, we have
 \[ \begin{split}
 &\mathscr{F}(\mathscr{G}^{0,0})-\mathscr{F}(\mathscr{G}^\triangledown)=3\sigma-6\cos\psi=3(\sigma-2cos{\psi}>0,\\
 &\mathscr{F}(\mathscr{G}^{1,0})-\mathscr{F}(\mathscr{G}^\triangledown)=3(\tau-2\sin(\frac{\pi}{6}-\psi))\lambda
 >0,\\
 \end{split}\]
 where $\lambda=\vert p_1-O\vert$. This concludes the proof.
\end{proof}
On the basis of Proposition \ref{ET} we have that, under the assumptions of Theorem \ref{fine}, if $W$ has $N=7$ and the $\sigma_{aa^\prime}$  satisfy \eqref{Sig-Cond}, then an equivariant minimizer $u^\epsilon$ of problem \eqref{min} has a well determined structure which via \eqref{Stru} is intimately related to the network $\mathscr{G}^\triangledown$ and since $\mathscr{G}^\triangledown$ is unique all such minimizers have a similar structure. In the proof of Proposition \ref{ET} the fact that $(1,g(\psi))$ is a corner point of the set of the parameter $(d,\ell)$ plays an essential role. Indeed this fact implies that not all perturbations are reversible leading to the inequalities in \eqref{Sig-Cond} and in turn to the possibility of applying Theorem \ref{fine}.
On the other hand, when the $\sigma_{aa^\prime}$ have the exact values given in \eqref{sig-minphi}, there exist continuous families of minimizers of $\mathscr{F}(\mathscr{G})$ and it is natural to question about the relationship of minimizers of problem \eqref{min} with these families of geometric objects. A possibility could be that, for the parabolic dynamics associated to the functional $J_B^\epsilon$, there exists a phenomenon of slow motion along an invariant manifold $\mathscr{G}\rightarrow v^\epsilon\in H^1(B;\R^m)$ where $\mathscr{G}$ is the generic element in the family and $v^\epsilon$ is a suitable map with diffuse interfaces related to $\mathscr{G}$. The minimizers $u^\epsilon$ of \eqref{min} should be stationary points in this manifold and an interesting problem could be the characterization of the  $\mathscr{G}$ that correspond to these stationary points.

If this heuristic description is correct, one can also rise the question of the existence of entire solutions $u\in H^1(\R^2;\R^m)$ of \eqref{AC} with a nontrivial complex structure in a compact set.

\section{Appendix}
\label{App}
\begin{lemma}
\label{UBerr}
There is a constant $C>0$ such that in the upper bound \eqref{UB} we have
\[e_\epsilon\leq C\epsilon\vert\ln{\epsilon}\vert^2.\]
\end{lemma}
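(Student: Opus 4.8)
The plan is to construct an explicit competitor $v^\epsilon\in\mathscr A$ that is a regularization of the step map $v^0=\sum_{a\in A}a\1_{S_{a,f}}$ associated to the free optimal network $\mathscr G_f$ (via $\bar\Omega\setminus\mathscr G_f=\cup_a S_{a,f}$), and then estimate $J_\Omega^\epsilon(v^\epsilon)$ from above, whence $J_\Omega^\epsilon(u^\epsilon)\le J_\Omega^\epsilon(v^\epsilon)$ by minimality. Away from $\mathscr G_f$ the map $v^\epsilon$ will equal the constant $a$ on $S_{a,f}$, so it contributes nothing to the energy there. In a tubular neighbourhood of each arc $\gamma\subset\mathscr G_f$ of width $O(\epsilon|\ln\epsilon|)$ we interpolate across the interface using the one-dimensional optimal connection $u_{a_\gamma a_\gamma'}$ furnished by $h_4$: along the normal direction to $\gamma$ we set $v^\epsilon(x)=u_{a_\gamma a_\gamma'}\big(\tfrac{s(x)}{\epsilon}\big)$ where $s(x)$ is the signed normal coordinate, cut off at distance $\rho_\epsilon=C\epsilon|\ln\epsilon|$ and patched to the constants $a_\gamma,a_\gamma'$ there. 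Since $u_{a_\gamma a_\gamma'}(t)\to a_{\gamma}^{(\pm)}$ exponentially as $t\to\pm\infty$ (a consequence of $h_1$, the nondegeneracy of $W_{zz}(a)$), the mismatch at $|s|=\rho_\epsilon$ is $O(e^{-c\rho_\epsilon/\epsilon})=O(\epsilon^{cC})$, which is negligible for $C$ large; the patching layer then costs only $O(\epsilon^{cC}\cdot\rho_\epsilon/\epsilon)$ in energy per unit length of $\gamma$.

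The main term comes from the interface layers themselves. For a single arc $\gamma$, using the coarea/tubular-coordinate change $dx = J(x)\,ds\,d\ell$ (with $\ell$ arclength along $\gamma$ and $J=1+O(\rho_\epsilon)$ since $\gamma$ is piecewise $C^2$ by Proposition \ref{opt-G}), one gets
\[
\int_{\text{tube}(\gamma)}\Big(\epsilon\frac{|\nabla v^\epsilon|^2}{2}+\frac1\epsilon W(v^\epsilon)\Big)dx
\le |\gamma|\Big(\int_\R\Big(\frac{|u_{a_\gamma a_\gamma'}'|^2}{2}+W(u_{a_\gamma a_\gamma'})\Big)dt\Big)+O(\rho_\epsilon)+O(\epsilon^{cC}).
\]
Now $\int_\R W(u_{a_\gamma a_\gamma'})=\tfrac12\int_\R|u_{a_\gamma a_\gamma'}'|^2$ (equipartition, which holds for the optimal one-dimensional connection, or can be arranged), so the bracket equals $\int_\R|u_{a_\gamma a_\gamma'}'|^2=\sigma_{a_\gamma a_\gamma'}$ by the definition in $h_4$. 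Summing over the $n_s$ arcs yields $J_\Omega^\epsilon(v^\epsilon)\le\mathscr F(\mathscr G_f)+O(\rho_\epsilon)+O(\epsilon^{cC})$. The $O(\rho_\epsilon)=O(\epsilon|\ln\epsilon|)$ term would already suffice were it not for the triple junctions.

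The genuine obstacle is the behaviour near the $n_b$ branching points and near $\partial\Omega$. At a triple junction $p$ the three tubes overlap in a disc of radius $\sim\rho_\epsilon$; there $v^\epsilon$ cannot simultaneously match all three one-dimensional profiles, so one defines $v^\epsilon$ on $B_{\rho_\epsilon}(p)$ by, e.g., a Lipschitz interpolation with $|\nabla v^\epsilon|=O(1/\rho_\epsilon)$, which costs $\epsilon|\nabla v^\epsilon|^2\cdot|B_{\rho_\epsilon}(p)| = O(\epsilon\,\rho_\epsilon^2/\rho_\epsilon^2)=O(\epsilon)$ from the gradient and $O(\rho_\epsilon^2/\epsilon)=O(\epsilon|\ln\epsilon|^2)$ from the potential term — this is precisely where the squared logarithm enters. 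Summing over the finitely many branching points gives the stated $e_\epsilon\le C\epsilon|\ln\epsilon|^2$. Near $\partial\Omega$ one must additionally honour the Dirichlet datum $v_0^\epsilon$; using $h_2$ (so $|\nabla v_0^\epsilon|\le M/\epsilon$) one corrects $v^\epsilon$ in a boundary layer of width $O(\epsilon)$, contributing $O(\epsilon)$ to the energy, with the small arcs $I_a$ (of length $\le C\epsilon^{1/3}$ by \eqref{h}) handled as part of this boundary correction. Assembling all contributions gives $J_\Omega^\epsilon(u^\epsilon)\le J_\Omega^\epsilon(v^\epsilon)\le\mathscr F(\mathscr G_f)+C\epsilon|\ln\epsilon|^2$, which is \eqref{UB} with $e_\epsilon\le C\epsilon|\ln\epsilon|^2$.
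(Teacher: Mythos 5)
Your proposal is correct and follows essentially the same construction as the paper: a one-dimensional profile $u_{a_\gamma a_\gamma'}(t/\epsilon)$ across tubes of width $O(\epsilon|\ln\epsilon|)$ around each arc, linear patching to the constants using the exponential decay of the connection, and a crude interpolation on balls of radius $\rho\sim\epsilon|\ln\epsilon|$ around the triple junctions whose potential term produces the $\epsilon|\ln\epsilon|^2$. The only minor discrepancy is that your gradient estimate $O(1/\rho_\epsilon)$ inside the junction balls is too optimistic (the boundary data there still varies on scale $\epsilon$ across the interfaces, and the paper's sector-by-sector computation indeed finds an $O(\epsilon|\ln\epsilon|^2)$ gradient contribution as well), but this does not affect the final bound.
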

\begin{proof}
Let $\gamma\in\mathscr{G}_f$  be one of the $n_s$ arcs in $\mathscr{G}_f$. We assume that $\gamma$ coincides with a segment $\mathrm{sg}[p_1,p_2]$ with $p_1$ and $p_2$ branching points of $\mathscr{G}_f$. The general case can be treated by similar arguments. We have $\gamma\subset\partial S_{a,f}\cap\partial S_{a^\prime,f}$ for some $a,a^\prime$. We let $\nu$ be a unit vector orthogonal to $\tau=\frac{p_2-p_1}{\vert p_2-p_1\vert}$ and oriented from $S_{a^\prime,f}$ to $S_{a,f}$.
Let $\alpha_0>0$ be the minimum value of the angle $\alpha$ between the tangents at $p$ to two distinct arcs of $\mathscr{G}_f$ that originate at a branching point $p$.

Set $h=\kappa_0\epsilon\vert\ln{\epsilon}\vert$ and $\rho=\frac{4 h}{\sin{\frac{\alpha_0}{2}}}$ with $\kappa_0>$ a constants to be chosen later. Define
\[\begin{array}{l}
\tilde{E}_\gamma=\{x: x=p_1+s\tau+ t\nu,\;s\in[0,\vert\gamma\vert]\;t\in[- h,h]\},\\\\
\tilde{E}_\gamma^\pm=\{x: x=p_1+s\tau+ t\nu,\;s\in[0,\vert\gamma\vert]\;t\in[\pm h,\pm 2h]\},\\\\
E_\gamma=\tilde{E}_\gamma\setminus(B_\rho(p_1)\cup B_\rho(p_2)),\\\\
E_\gamma^\pm=\tilde{E}_\gamma^\pm\setminus(B_\rho(p_1)\cup B_\rho(p_2)).
\end{array}\]
We define a test map $u_{\mathrm{test}}$ by setting, for each $\gamma\in\mathscr{G}_f$
\begin{equation}
\label{utest}
\begin{split}
&u_{\mathrm{test}}(x)=u_{a^\prime a}(\frac{t}{\epsilon}),\;\;x=p_1+s\tau+t\nu\in E_\gamma,\\\\
&u_{\mathrm{test}}(x)=u_{a^\prime a}(\frac{h}{\epsilon})(2-\frac{t}{h})+(\frac{t}{h}-1)a,\;\;x=p_1+s\tau+t\nu\in E_\gamma^+,\\\\
&u_{\mathrm{test}}(x)=u_{a^\prime a}(\frac{-h}{\epsilon})(2-\frac{\vert t\vert}{h})+(\frac{\vert t\vert}{h}-1)a^\prime,\;\;x=p_1+s\tau+t\nu\in E_\gamma^-.
\end{split}
\end{equation}
This defines $u_{\mathrm{test}}$ in the set $E=\cup_{\gamma\in\mathscr{G}_f}(E_\gamma\cup E_\gamma^+\cup E_\gamma^-)$. To complete the definition of $u_{\mathrm{test}}$ set $B=\cup_{p\in P}B_\rho(p)$ where $P$ denote the set of the branching point of $\mathscr{G}_f$ and define
\begin{equation}
\label{utest1}
\begin{split}
&u_{\mathrm{test}}(x)=a,\;\;\text{for}\;x\in S_{a,f}\setminus(E\cup B),\;a\in A.\\\\
\end{split}
\end{equation}
From \eqref{utest} and \eqref{utest1} the map $u_{\mathrm{test}}$ is defined on $\Omega\setminus B$ and in particular on $\partial B_\rho(p)$ for each $p\in P$. Therefore we can define
\begin{equation}
\label{utest2}
u_{\mathrm{test}}(x)=\frac{r}{\rho}u_{\mathrm{test}}(p+\rho\mu),\;\;\text{for}\;x=p+r\mu,\;r\in(0,\rho],\;\mu\in\SF.
\end{equation}
From $h_1$ there are constants $C_W>0$ and $\delta_0>0$ such that
\begin{equation}
\label{ww}
W(a+z)\leq\frac{1}{2}C_W\vert z\vert^2,\;\;\vert z\vert\leq\delta_0,\;\;a\in A.
\end{equation}
We also have, for some constants $c^0>0$, $C^0>0$ independent from $a,a^\prime\in A$,
\begin{equation}
\label{uaa}
\begin{split}
&\vert u_{a^\prime a}(\frac{t}{\epsilon})-a\vert\leq C^0e^{-c^0\frac{t}{\epsilon}},\;\;t>0,\\\\
&\vert u^\prime_{a^\prime a}(\frac{t}{\epsilon})\vert\leq C^0e^{-c^0\frac{\vert t\vert}{\epsilon}},\;\;t\in\R.
\end{split}
\end{equation}
It results
\begin{equation}
\label{JE}
J_{E_\gamma}^\epsilon(u_{\mathrm{test}})\leq\vert\gamma\vert\int_{-\frac{h}{\epsilon}}^{\frac{h}{\epsilon}}
\vert u^\prime_{a^\prime a}(s)\vert^2ds\leq\sigma_{a^\prime a}\vert\gamma\vert.
\end{equation}
By adding and subtracting $a$ we rewrite \eqref{utest}$_2$ in the form
\begin{equation}
u_{\mathrm{test}}(x)=a+(u_{a^\prime a}(\frac{h}{\epsilon})-a)(2-\frac{t}{h}),\;\;x=p+s\tau+t\nu,\;t\in[h,2h],
\label{newform}
\end{equation}
which implies
\[\frac{\partial}{\partial t} u_{\mathrm{test}}(x)=-\frac{1}{h}(u_{a^\prime a}(\frac{h}{\epsilon})-a).\]
From these expressions \eqref{ww} and \eqref{uaa} we obtain
\begin{equation}
\begin{split}
&\frac{1}{\epsilon}\int_{E_\gamma^+}W(u_{\mathrm{test}})dx\leq\frac{\vert\gamma\vert}{2\epsilon}C_W(C^0)^2\int_h^{2h}
e^{-2c^0\frac{h}{\epsilon}}dt\leq\frac{\vert\gamma\vert}{2}C_W(C^0)^2\frac{h}{\epsilon}e^{-2c^0\frac{h}\epsilon}
\leq C\epsilon^{2c^0\kappa_0}\vert\ln{\epsilon}\vert,\\\\
&\epsilon\int_{E_\gamma^+}\vert\frac{\partial}{\partial t}u_{\mathrm{test}}\vert^2dx\leq\vert\gamma\vert(C^0)^2\frac{\epsilon}{h}e^{-2c^0\frac{h}{\epsilon}}\leq
C\frac{\epsilon^{2c^0\kappa_0}}{\vert\ln{\epsilon}\vert}.
\end{split}
\label{JE+}
\end{equation}
To estimate $J^\epsilon_{B_{\rho}(p)}( u_{\mathrm{test}})$, $p\in P$ we focus on $B_{\rho}(p_1)$. The same estimate applies to all $p\in P$. By definition $u_{\mathrm{test}}$ is bounded in  $B_{\rho}(p_1)$ and the same is true for $W(u_{\mathrm{test}})$. It follows
\begin{equation}
\frac{1}{\epsilon}\int_{B_{\rho}(p_1)}W(u_{\mathrm{test}})dx\leq \frac{C}{\epsilon}\rho^2\leq C\epsilon\vert\ln{\epsilon}\vert^2.
\label{JWB}
\end{equation}
To estimate the contribution of $\nabla u_{\mathrm{test}}$ to $J^\epsilon_{B_{\rho}(p_1)}( u_{\mathrm{test}})$ we  divide $B_{\rho}(p_1)$ in sectors and focus on the sectors $T_i\subset B_{\rho}(p_1)$, $i=1,2,3$ defined by
\begin{equation}
\begin{split}
& T_1=\{x=p_1+r\mu: r\in[0,\rho],\;\mu\cdot\nu=\sin{\theta}\in[0,h],\\
& T_2=\{x=p_1+r\mu: r\in[0,\rho],\;\mu\cdot\nu=\sin{\theta}\in[h,2h],\\
& T_3=\{x=p_1+r\mu: r\in[0,\rho],\;\mu\cdot\nu=\sin{\theta}\in[2h,\frac{\alpha}{2}],\\
\end{split}
\label{sect3}
\end{equation}
where $\alpha\geq\alpha_0$ is the angle determined by the two arcs on the boundary of $S_{a,f}$ that originate in $p_1$.
The estimates that we derive for $T_i$, $i=1,2,3$ apply to all the other similar sectors.

For $x\in T_1$ we have $u_{\mathrm{test}}(x)=\frac{r}{\rho}u_{a^\prime a}(\frac{\rho\sin{\theta}}{\epsilon})$. Since $u_{a^\prime a}$ and $u^\prime_{a^\prime a}$ are bounded functions it follows
\begin{equation}\vert\frac{\partial}{\partial r}u_{\mathrm{test}}\vert\leq\frac{C}{\rho},\quad
\vert\frac{\partial}{\partial\theta}u_{\mathrm{test}}\vert\leq C\frac{r}{\epsilon}.
\label{derEst}
\end{equation}
Hence
\begin{equation}
\frac{\epsilon}{2}\int_{T_1}\vert\nabla u_{\mathrm{test}}\vert^2dx\leq C\epsilon\int_0^{\sin^{-1}(\frac{h}{\rho})}
(\frac{1}{\rho^2}+\frac{1}{\epsilon^2})rdrd\theta\leq C\epsilon(1+\frac{\rho^2}{\epsilon^2})\leq C\epsilon\vert\ln{\epsilon}\vert^2.
\label{JGradT1}
\end{equation}
From \eqref{newform} and $x\in T_2$ it follows $ u_{\mathrm{test}}(x)=
\frac{r}{\rho}\Big(a+(u_{a^\prime a}(\frac{h}{\epsilon})-a)(2-\frac{\rho\sin{\theta}}{h})\Big).$ From this and
$\vert u_{a^\prime a}(\frac{h}{\epsilon})-a\vert\leq C^0e^{-c^0\frac{h}{\epsilon}}\leq \epsilon^{c^0\kappa_0}$
we have
\[\vert\frac{\partial}{\partial r}u_{\mathrm{test}}\vert\leq\frac{C}{\rho},\quad
\vert\frac{\partial}{\partial\theta}u_{\mathrm{test}}\vert\leq C \epsilon^{c^0\kappa_0}\frac{r}{h},\]
and in turn
\begin{equation}
\frac{\epsilon}{2}\int_{T_2}\vert\nabla u_{\mathrm{test}}\vert^2dx\leq
C\epsilon(\frac{1}{\rho^2}+\epsilon^{2c^0\kappa_0}\frac{1}{h^2})\int_{\sin^{-1}(\frac{h}{\rho})}
^{\sin^{-1}(\frac{2h}{\rho})}
\int_0^\rho rdrd\theta\leq C\epsilon,
\label{JGradT2}
\end{equation}
where we have also used $\rho=\frac{2h}{\sin{\frac{\alpha_0}{2}}}$.
For $x\in T_3$ we have $u_{\mathrm{test}}(x)=\frac{r}{\rho}a$ and we obtain
\begin{equation}
\frac{\epsilon}{2}\int_{T_3}\vert\nabla u_{\mathrm{test}}\vert^2dx\leq C\epsilon.
\label{JGradT3}
\end{equation}
From \eqref{JWB},\eqref{JGradT1},\eqref{JGradT2},\eqref{JGradT3} we conclude
\begin{equation}
J^\epsilon_{B_\rho(p_1)}(u_{\mathrm{test}})\leq C\epsilon\vert\ln{\epsilon}\vert^2.
\label{JB1}
\end{equation}
We have assumed that $p_1$ is a branching point. A similar estimate applies to  $J^\epsilon_{B_\rho(p)}\cap\Omega$ when $p\partial\Omega$ is an end point. Therefore from \eqref{JB}, \eqref{JE}, \eqref{JE+} it follows
\begin{equation}
J_\Omega^\epsilon(u_{\mathrm{test}})\leq\sum_{\gamma\in\mathscr{G}_f}\sigma_{a_\gamma a_\gamma^\prime}\vert\gamma\vert+C\epsilon\vert\ln{\epsilon}\vert^2.
\label{UBerr1}
\end{equation}
The proof is complete.
\end{proof}
\begin{lemma}
\label{gf-gf}
Assume $\vert I_a\vert\leq$, $a\in\tilde{A}$. Then there exist $\hat{\mathscr{G}}^\prime\in\bar{\mathcal{G}}$ which has the same end points as $\mathscr{G}_f$ and satisfies \eqref{gprime-g} with
\[e_\epsilon=\]
\end{lemma}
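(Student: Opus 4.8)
The plan is to produce $\hat{\mathscr{G}}^\prime$ by a surgery of $\hat{\mathscr{G}}$ localised near each of its $\tilde N$ end points, sliding them along $\partial\Omega$ until they land on the end points of $\mathscr{G}_f$. First I would record the elementary facts that make the surgery possible. By Proposition \ref{opt-G} the optimal constrained network $\hat{\mathscr{G}}$ has exactly $\tilde N$ end points, one in each arc $\bar I_a$, $a\in\tilde A$, and by Corollary \ref{cor} the free minimizer $\mathscr{G}_f$ has its $\tilde N$ end points likewise distributed, one in each $\bar I_a$; call them $e_a$ and $f_a$. Since on $I_a$ one has $|v_0^\epsilon(x)-a^\prime|>\delta$ for every $a^\prime\in A$ (by \eqref{Gammaa} and \eqref{Bdry-int}), and $u^\epsilon=v_0^\epsilon$ on $\partial\Omega$, the arc $I_a$ lies in $\mathscr{I}\cap\partial\Omega$; as $\mathscr{I}$ is relatively open in $\bar\Omega$, $\mathscr{I}$ contains a thin one-sided collar of the open arc $I_a$, and $\bar I_a\subset\bar{\mathscr{I}}$. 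Finally the sub-arc of $\partial\Omega$ joining $e_a$ to $f_a$ has length at most $|I_a|\le C\epsilon^{1/3}$ by \eqref{h}.

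Since the extremal $\hat{\mathscr{G}}\in\bar{\mathcal{G}}$ may be degenerate (Figure \ref{sing}), I would first replace it, at arbitrarily small expense in both $d(\,\cdot\,,\hat{\mathscr{G}})$ and $\mathscr{F}$, by a network $\hat{\mathscr{G}}_1\in\mathcal{G}$ — possible because $\bar{\mathcal{G}}$ is the closure of $\mathcal{G}$ — so that near its end point $e_a\in I_a$ the arc $\gamma_a$ carrying that end point is a simple arc entering $\mathscr{I}\cap\Omega$ and meeting no other arc. I then build $\hat{\mathscr{G}}^\prime$ by keeping every branching point and every arc of $\hat{\mathscr{G}}_1$ except $\gamma_a$, and replacing the short initial piece $\gamma_a([0,\ell_0])$ (from $e_a$ to an interior point $\gamma_a(\ell_0)$, $\ell_0\le|I_a|$) by an arc $\eta_a\subset\mathscr{I}\cap\Omega$ that starts at $f_a$, runs through the interior collar just inside $I_a$, and joins $\gamma_a(\ell_0)$; the new arc is the concatenation $\eta_a\cdot\gamma_a([\ell_0,1])$, reparametrised over $[0,1]$ as a Lipschitz map with constant comparable to $K$. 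The thin sliver enclosed between $\eta_a$ and the boundary piece $[e_a,f_a]\subset I_a$ is then assigned to the neighbouring region on the side of $\partial\Omega$, which simply changes which of the two regions adjacent to the old end point the sliver belongs to; consequently the new arc still lies on $\mathscr{C}_b\cap\mathscr{C}_{b^\prime}$ for the \emph{same} pair $b\ne b^\prime$ of phases that $\gamma_a$ separated, both regions remain simply connected (one gains a boundary-attached disk, the other loses one — neither operation creates a hole or disconnects), and the circuits $\mathscr{C}_c$ for $c\ne b,b^\prime$ are untouched. Hence the surgery changes neither $n_s$, $n_b$, $\tilde N$, nor the triple-junction structure, nor the non-crossing conditions, nor the index/circuit relations \eqref{index-}--\eqref{gamma-in-}; it keeps exactly one end point per $\bar I_a$; and it leaves $\hat{\mathscr{G}}^\prime\subset\bar{\mathscr{I}}$ with total length increased only by $\sum_a|\eta_a|\le C\max_a|I_a|$, negligible against \eqref{B-length}. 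Thus $\hat{\mathscr{G}}^\prime\in\bar{\mathcal{G}}$, and by construction its end points are exactly those of $\mathscr{G}_f$.

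For the estimate, since the weight $\sigma_{a_\gamma a_\gamma^\prime}$ of $\eta_a\cdot\gamma_a([\ell_0,1])$ equals that of $\gamma_a$ and all other arcs and weights are unchanged,
\[
\bigl|\mathscr{F}(\hat{\mathscr{G}}^\prime)-\mathscr{F}(\hat{\mathscr{G}}_1)\bigr|
=\Bigl|\sum_{a\in\tilde A}\sigma_{a_{\gamma_a}a_{\gamma_a}^\prime}\bigl(|\eta_a|-\ell_0\bigr)\Bigr|
\le\sigma_M\sum_{a\in\tilde A}\bigl(|I_a|+\ell_0\bigr)\le C\max_{a\in\tilde A}|I_a|,
\]
with $\sigma_M=\max_{a\ne a^\prime}\sigma_{aa^\prime}$. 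For the $C^0$-distance I would parametrise so that the modified arc traverses $\eta_a$ over a short initial subinterval of $[0,1]$ and then tracks $\gamma_a$ with an arclength offset at most $|I_a|+\ell_0$; the Lipschitz bound \eqref{B-length} then gives $\|(\eta_a\cdot\gamma_a([\ell_0,1]))-\gamma_a\|_{C^0([0,1];\R^2)}\le C\max_a|I_a|$, whence $d(\hat{\mathscr{G}}^\prime,\hat{\mathscr{G}}_1)\le C\max_a|I_a|$. Adding the arbitrarily small errors incurred in passing from $\hat{\mathscr{G}}$ to $\hat{\mathscr{G}}_1$ yields \eqref{gprime-g} with $e_\epsilon=C\max_{a\in\tilde A}|I_a|$, which by \eqref{h} is bounded by $C\epsilon^{1/3}$.

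The main obstacle is the bookkeeping that $\hat{\mathscr{G}}^\prime$ genuinely belongs to $\bar{\mathcal{G}}$: one must ensure the collar of $I_a$ used to route $\eta_a$ really lies inside $\mathscr{I}$, including near the ``pinch points'' $\partial I_a\subset\partial\mathscr{I}$ where the collar width tends to zero — this is handled by starting the surgery slightly inside $I_a$, i.e. choosing $\ell_0>0$ so that $\gamma_a(\ell_0)$ sits in the fat part of the collar, and, if necessary, first sliding $e_a$ and $f_a$ into the open arc $I_a$ at a further cost $o(|I_a|)$ — and one must verify carefully that transferring the thin sliver preserves simple connectedness of both adjacent regions and does not disturb \eqref{index-}--\eqref{gamma-in-}. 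Reducing first to $\hat{\mathscr{G}}_1\in\mathcal{G}$, where by Theorem \ref{network} the arcs are embedded and meet only at triple points, is what makes all of this routine rather than delicate; the quantitative estimates are then elementary.
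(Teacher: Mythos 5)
Your overall strategy is the one the paper uses: modify only the $\tilde N$ arcs of $\hat{\mathscr{G}}$ that reach $\partial\Omega$, slide each end point along $I_a$ to the corresponding end point of $\mathscr{G}_f$, and pay a cost $\le C\max_{a\in\tilde A}\vert I_a\vert\le C\epsilon^{1/3}$ both in $d$ and in $\mathscr{F}$; the observation that $I_a\subset\mathscr{I}\cap\partial\Omega$, so the connecting piece stays in $\bar{\mathscr{I}}$, is exactly the point that makes the surgery admissible. The paper's version is simpler than yours in one respect: it does not delete an initial piece of $\hat{\gamma}$ or route through an interior collar, but merely concatenates $\hat{\gamma}$ (reparametrized over $[0,\tfrac{1}{1+h}]$) with the sub-arc of $I_a$ from $\hat{\gamma}(1)$ to $\gamma_f(1)$, and reads off both estimates from the Lipschitz bound \eqref{B-length}. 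This avoids all of your ``pinch point'' worries near $\partial I_a$.

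There is, however, one step in your argument that does not follow as stated: the preliminary replacement of $\hat{\mathscr{G}}$ by $\hat{\mathscr{G}}_1\in\mathcal{G}$ ``at arbitrarily small expense in both $d(\cdot,\hat{\mathscr{G}})$ and $\mathscr{F}$, possible because $\bar{\mathcal{G}}$ is the closure of $\mathcal{G}$.'' The closure is taken in the $C^0$ metric \eqref{metric}, and $\mathscr{F}$ is only \emph{lower} semicontinuous with respect to that metric (this is precisely Step 1 of the proof of Proposition \ref{opt-G}): a sequence $\mathscr{G}_j\in\mathcal{G}$ converging to $\hat{\mathscr{G}}$ satisfies $\liminf_j\mathscr{F}(\mathscr{G}_j)\ge\mathscr{F}(\hat{\mathscr{G}})$, but $C^0$-closeness gives no upper bound on length, so nothing guarantees $\mathscr{F}(\hat{\mathscr{G}}_1)\le\mathscr{F}(\hat{\mathscr{G}})+o(1)$. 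If $\mathscr{F}(\hat{\mathscr{G}}_1)$ exceeds $\mathscr{F}(\hat{\mathscr{G}})$ by a fixed amount, the final bound $\vert\mathscr{F}(\hat{\mathscr{G}}^\prime)-\mathscr{F}(\hat{\mathscr{G}})\vert\le e_\epsilon$ fails. The fix is to drop this reduction entirely and perform the surgery directly on $\hat{\mathscr{G}}\in\bar{\mathcal{G}}$, as the paper does: since the appended piece is a sub-arc of $I_a\subset\bar{\mathscr{I}}$ of length $\ell\le\vert I_a\vert$ attached at the existing end point, the modified object is still a limit of admissible networks, and no nondegeneracy of $\hat{\mathscr{G}}$ is needed. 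With that change your quantitative estimates are correct and give $e_\epsilon\le C\max_{a\in\tilde A}\vert I_a\vert\le C\epsilon^{1/3}$, matching what is used in Proposition \ref{dhatg-gf}.
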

\begin{proof}
To define $\hat{\mathscr{G}}^\prime$ we only change the $\tilde{N}$ arcs $\hat{\gamma}_j$, $j=1,\ldots,\tilde{N}$ which have one of the extremes in $I_{a_j}$, $a_j\in\tilde{A}$. Let $\hat{\gamma}\in\hat{\mathscr{G}}$ be one of these arcs and let $\gamma_f\in\mathscr{G}_f$ the corresponding arc in $\mathscr{G}_f$. The arcs $\hat{\gamma}$ and $\gamma_f$ are determined by the condition that both have one of the extremes in the same arc $I_a$, for some $a\in\tilde{A}$:
\[\hat{\gamma}(1)\in I_a,\quad\gamma_f(1)\in I_a.\]
Let $\mathrm{arc}(\hat{\gamma}(1),\gamma_f(1))\subset I_a$ be the arc with extremes $\hat{\gamma}(1)$ and $\gamma_f(1)$. Set
\begin{equation}
\ell=\vert\mathrm{arc}(\hat{\gamma}(1),\gamma_f(1))\vert,
 \label{ell}
 \end{equation}
 and let $[0,\ell]\ni s\rightarrow\eta(s)\in\partial\Omega$, $\eta(0)=\hat{\gamma}(1)$, $\eta(\ell)=\gamma_f(1)$, $s$ arclength, the representation of $\mathrm{arc}(\hat{\gamma}(1),\gamma_f(1))$. For constructing $\hat{\mathscr{G}}^\prime$ we replace $\hat{\gamma}$ with the arc $\gamma$ defined by
\begin{equation}
\begin{split}
&\gamma(t)=\hat{\gamma}((1+h)t),\;\;t\in[0,\frac{1}{1+h}],\\
&\gamma(t)=\eta(\frac{\ell}{h}((1+h)t-1)),\;\;t\in(\frac{1}{1+h},1],
\end{split}
\label{gam-def}
\end{equation}
where we have set $h=$.
Note in particular that $\gamma(1)=\gamma_f(1)$.

We can assume that $\hat{\gamma}$ is Lipshitz with Lipshitz constant $K>0$ given in \eqref{B-length}, therefore
\begin{equation}
\begin{split}
&\vert\gamma(t)-\hat{\gamma}(t)\vert=\vert\hat{\gamma}((1+h)t)-\hat{\gamma}(t)\vert\leq Kh,\;\;t\in[0,\frac{1}{1+h}],\\\\
&\vert\gamma(t)-\hat{\gamma}(t)\vert\leq\vert\eta(\frac{\ell}{h}((1+h)t-1))-\hat{\gamma}(1)\vert
+\vert\hat{\gamma}(1)-\hat{\gamma}(t)\vert\\&\leq\ell+K\frac{h}{1+h}
,\;\;\quad\;t\in(\frac{1}{1+h},1],
\end{split}
\label{Est-gam}
\end{equation}
where we have also used $\eta(0)=\hat{\gamma}(1)$.
From \eqref{Est-gam} and $\ell\leq h$
we conclude $\|\gamma-\hat{\gamma}\|_{C^0[0,1]}\leq Ch$. Hence
\begin{equation}
d(\hat{\mathscr{G}}^\prime,\hat{\mathscr{G}})\leq Ch.
\label{dgatgg}
\end{equation}
The inequality $\mathscr{F}(\hat{\mathscr{G}}^\prime),\mathscr{F}(\hat{\mathscr{G}})\leq Ch$ follows from
\eqref{ell} and $\ell\leq h$.
The proof is complete.
\end{proof}

\end{document}